\newcommand{\s}[1]{{\mathcal #1}}
\newcommand{\bb}[1]{{\mathbb #1}}
\newcommand{\ip}[2]{\left\langle #1,#2 \right\rangle}
\newcommand{\sint}[1]{\int_{\mathbb{T}^d} #1}
\newcommand{\R}{\mathbb{R}}
\newtheorem{theorem}{Theorem} 
\newtheorem{corollary}[theorem]{Corollary}
\newtheorem{lemma}[theorem]{Lemma}
\newtheorem{proposition}[theorem]{Proposition}
\newtheorem{definition}[theorem]{Definition}
\newtheorem{remark}[theorem]{Remark}
\newtheorem{assumption}[theorem]{Assumption}
\numberwithin{equation}{section}
\numberwithin{theorem}{section}
\begin{document}

	\title[MFG of Controls]
	{Weak Solutions for Potential Mean Field Games of Controls}

	\author{P. Jameson Graber}
	\address{J.\@ Graber: Baylor University, Department of Mathematics;
		One Bear Place \#97328;
		Waco, TX 76798-7328 \\
		Tel.: +1-254-710- \\
		Fax: +1-254-710-3569 
	}
	\email{Jameson\_Graber@baylor.edu}
	
	\author{Alan Mullenix}
	\address{A.\@ Mullenix: Baylor University, Department of Mathematics;
		One Bear Place \#97328;
		Waco, TX 76798-7328 \\
		Tel.: +1-254-710- \\
		Fax: +1-254-710-3569 
	}
	\email{Alan\_Mullenix@baylor.edu}
	
	\author{Laurent Pfeiffer}
	\address{L.\@ Pfeiffer: Inria  and  CMAP  (UMR  7641),  CNRS,  Ecole  Polytechnique,  Institut  Polytechnique de  Paris, Route de Saclay, 91128 Palaiseau, France}
	\email{laurent.pfeiffer@inria.fr}
	
	\subjclass[2010]{35Q91, 35F61, 49J20}
	\date{\today}   
	
	\begin{abstract}
	We analyze a system of partial differential equations that model a potential mean field game of controls, briefly MFGC. Such a game describes the interaction of infinitely many negligible players competing to optimize a personal value function that depends in aggregate on the state and, most notably, control choice of all other players. A solution of the system corresponds to a Nash Equilibrium, a group optimal strategy for which no one player can improve by altering only their own action. We investigate the second order, possibly degenerate, case with non-strictly elliptic diffusion operator and local coupling function. The main result exploits potentiality to employ variational techniques to provide a unique weak solution to the system, with additional space and time regularity results under additional assumptions. New analytical subtleties occur in obtaining a priori estimates with the introduction of an additional coupling that depends on the state distribution as well as feedback.
	\end{abstract}
	
	\keywords{mean field games, mean field games of controls, calculus of variations, optimal control, weak solutions}
	
	\maketitle

	\section{Introduction} \label{sec:intro}

	Mean Field Games (MFG), introduced simultaneously in 2006-7 by J.-M.\@ Lasry, P.-L.\@ Lions \cite{lasry07} and M.\@ Huang, R.\@ Malham{\'e}, P.\@ Caines \cite{huang2006large}, have seen swift development into a vibrant and substantial subfield of partial differential equations.
	See, for instance, the monographs \cite{carmona2017probabilistic,carmona2017probabilisticII,bensoussan2013mean}.
	 Considered are high population games of homogeneous, negligibly powerful players all attempting to optimize a cost while contending with the effects of the choices of all other players.

	\begin{comment}
	{\color{red} \textbf{Laurent: I would remove the text in red and rather give a heuristic interpretation later on.} The term Mean Field, inspired by physics, relates to each player viewing the remaining players as one large entity, seen in the interaction of \( f \) below, with a dependency on the distribution of player states, \( m(t,x) \).  Each player thus attempts to find, out of admissible strategies \( S \),
	\[ \inf_{S} \left\{  \int_t^T \underbrace{L(u(\tau),S(\tau))+f(u(\tau),m(\tau,u(\tau))}_{\text{Running Cost}} \dif \tau + \underbrace{g(u(T))}_{\text{Final Cost}} \right\} := \phi(t,x) \]
	Where \( \phi \) is called the value function for a particular player (the homogeneity giving all players the same value function). A Nash Equilibrium strategy \( s^* \) is found when, with all players employing \( s^* \), no single player can do better by a change in strategy.
	}
	\end{comment}
	
The term Mean Field, inspired by physics, relates to each player viewing the remaining players as one large entity. The cost functional that has to be optimized by each player typically incorporates an interaction term $f(m)$, where	 $m$ denotes the distribution of player states.
	Mean Field Games of Controls (briefly, MFGC), also called Extended Mean Field Games, introduce a control element into the Mean Field, so that not only can players ``detect" (via the Mean Field) the positions of others, but also their control choices.
	Such an extension naturally arises in many applications, for example in economics \cite{gueant2011mean,chan2015bertrand,chan2017fracking,graber2018existence,graber2018variational,graber2020commodities,graber2020nonlocal}.
	MFGC have been studied by D.\@ Gomes and V.\@ Voskanyan, who have results on classical solutions with S. Patrizi in the stationary (time independent) second order case where the diffusion is explicitly the Laplacian \cite{gomes2014extended}, and also in the time dependent first order case \cite{gomes2016extended}.
	In the second order uniformly parabolic time dependent case, Z.~Kobeissi has proved the existence of classical solutions under sufficient structural and smoothness assumptions, with uniqueness under additional assumptions, as well as results on approximate solutions \cite{kobeissi2019classical,kobeissi2020mean}.
	 P.~Cardaliaguet and C.-A.~Lehalle have provided a theorem giving the existence of weak solutions to a general system of MFGC, under the assumption that the Lagrangian is monotone with respect to the measure variable and that the Hamiltonian is sufficiently smooth; in particular it must depend on the density of players nonlocally \cite{cardaliaguet2017mfgcontrols}.
	 
	 In this article, we investigate the second order degenerate case (which can, in particular, be first order) featuring a non-strictly elliptic diffusion operator with space dependence. The MFGC system to be studied is
	\begin{equation}
	\label{eq:MFGC}
	\left\{
	\begin{array}{cll}
	(i)& -\partial_t u - A_{ij}\partial_{ij} u + H\del{x, D u(x,t) + \phi(x)^\intercal P(t)} = f\del{x,m(x,t)}
	& (x,t) \in Q,\\[0.3em]
	(ii)& \partial_t m - \partial_{ij}\del{A_{ij}m} + \nabla \cdot \del{vm} = 0
	& (x,t) \in Q,\\[0.3em]
	(iii)& P(t) = \Psi \big( \int_{\bb{T}^d} \phi(x)v(x,t)m(x,t) \dif x \big)
	& t \in [0,T],\\[0.3em]
	(iv)& v(x,t) = -D_\xi H\del{x,D u(x,t) + \phi(x)^\intercal P(t)}
	&(x,t) \in Q,\\[0.3em]
	(v)& m(x,0) = m_0(x), \quad u(x,T) = u_T(x),&
	x \in \bb{T}^d
	\end{array}\right.
	\end{equation}
	where $u,m$ are scalar functions, $v$ is a vector field in $\bb{R}^d$, $P = P(t) \in \bb{R}^k$, $Q := \bb{T}^d \times [0,T]$, and $A = [A_{ij}]_{1\leq i,j \leq d}$ is a given matrix-valued function on $\bb{T}^d$ whose values are symmetric and non-negative.

The heuristic interpretation of the above system is the following. Each agent controls the following dynamical system in $\bb{T}^d$:
\begin{equation*}
\dif X_t= \alpha_t \dif t + \sqrt{2} \Sigma(X_t) \dif B_t
\end{equation*}
where $(B_t)_{t \in [0,T]}$ is a standard Brownian motion in $\R^D$, $\alpha$ is an adapted process in $\R^d$, and $\Sigma \colon \bb{T}^d \rightarrow \R^{d \times D}$ is such that $A(x)= \Sigma(x) \Sigma(x)^\intercal$, for all $x \in \bb{T}^d$. The associated cost (to be minimized) is given by
\begin{equation*}
\mathbb{E} \Big[
\int_0^T \Big( H^*(X_t,-\alpha_t) + \langle P(t), \phi(X_t) \alpha_t \rangle + f(X_t,m(X_t,t)) \Big) \mathrm{d} t + u_T(X_T)
\Big].
\end{equation*}
At optimality, the control $\alpha$ is in feedback form, i.e.
\begin{equation*}
\alpha_t= v(X_t,t)=- D_p H(X_t, D u(X_t,t) + \phi(X_t)^\intercal P(t)).
\end{equation*}
On top of the classical interaction term $f(X_t,m(X_t,t))$, the price $P$ induces an interaction through the controls of the agents, since in equation $(iv)$, $P$ depends not only on $m$ but also on the feedback $v$.
	
	The basic structural assumptions are
	\begin{enumerate}
		\item $\bb{T}^d \times \bb{R}^d \ni (x,\xi) \mapsto H(x,\xi) \in \bb{R}$ is convex in $\xi$
		\item $\bb{T}^d \times [0,\infty) \ni (x,m) \mapsto f(x,m) \in \bb{R}$ is monotone increasing in $m$
		\item $\bb{R}^k \ni z \mapsto \Psi(z) \in \bb{R}$ is monotone in $z$, i.e.~$\langle \Psi(t,z_1)-\Psi(t,z_2), z_1-z_2 \rangle \geq 0$ for all $z_1,z_2 \in \bb{R}^k$.
	\end{enumerate}
See Subsection \ref{sec:asms} for more detailed assumptions on the data.

We will focus in the article on the MFG system obtained after performing the Benamou-Brenier change of variables $w = mv$ \cite{benamou2000computational}:
\begin{equation}
\label{eq:MFGC_bis}
\left\{
\begin{array}{cll}
(i)& -\partial_t u - A_{ij}\partial_{ij} u + H\del{x,D u(x,t) + \phi(x)^\intercal P(t)} = f\del{x,m(x,t)}
& (x,t) \in Q,\\[0.3em]
(ii)& \partial_t m - \partial_{ij}\del{A_{ij}m} + \nabla \cdot w = 0
& (x,t) \in Q,\\[0.3em]
(iii)& P(t) = \Psi \big( \int_{\bb{T}^d} \phi(x)w(x,t) \dif x \big)
& t \in [0,T],\\[0.3em]
(iv)& w(x,t) = - D_\xi H\del{x,D u(x,t) + \phi(x)^\intercal P(t)} m(x,t)
&(x,t) \in Q,\\[0.3em]
(v)& m(x,0) = m_0(x), \quad u(x,T) = u_T(x),&
x \in \bb{T}^d.
\end{array}\right.
\end{equation}
	
	In \cite{bonnans2019schauder} the authors prove the existence of classical solutions to \eqref{eq:MFGC} when $A$ is the identity matrix and the congestion term $f$ is nonlocal.
	In what follows we will provide the existence and uniqueness of a suitably defined ``weak solution" to the MFGC system with local coupling and provide additional regularity results involving the solution \( u \) and the distribution evolution \( m \). The method used follows the outline of Cardaliaguet, Graber, Porretta, and Tonon in \cite{cardaliaguet2015second}--see also  \cite{cardaliaguet2015weak,cardaliaguet2014mean}--and their treatment of the case of first and second order ``classical'' MFG systems. 
	The nonlocal interaction term $P(t)$ introduces new subtleties into the analysis, especially as it does not introduce any a priori gain of regularity.
	On the contrary, a priori estimates on solutions to the Hamilton-Jacobi Equation \eqref{eq:MFGC_bis}(i) are highly sensitive to the $L^p$ norms of $P(t)$.
	See Section \ref{sec:opt ctrl hj}.
	
	We first lay out the required assumptions on the data (Section \ref{sec:intro}). We then view the MFGC system as a system of optimality for two minimization problems (\( \inf D, \inf B \)) in duality (Section \ref{sec:duality});
	the Fenchel-Rockefellar duality theorem supplies uniqueness and some regularity for \( m,w \) and implies
		\[\inf_{\mathcal{K}_0} D(u,P,\gamma) = -\min_{\mathcal{K}_1} B(m,w). \]		
	Next, we show that the correct relaxation of \( \mathcal{K}_0 \) provides existence and a.e. uniqueness of a solution for the left hand side (Section \ref{sec:opt ctrl hj}). The solutions to these minimization problems are then shown to be proper candidates for the weak solution to the MFGC, whose existence is then proved (Section \ref{sec:existence}). Finally, with some additional assumptions on the data, we include some space and time regularity results for the weak solution based on previous techniques of Graber and Me{\'s}z{\'a}ros \cite{graber2018sobolev} (Section \ref{sec:regularity}).

We now lay out the notation and assumptions to hold throughout the paper.

\subsection{Notation}

We denote by 
$\ip{x}{y}$ the Euclidean scalar product  of two vectors $x,y\in\bb{R}^d$ and by
$|x|$ the Euclidean norm of $x$. We use conventions on repeated indices: for instance, if $a,b\in \bb{R}^d$, we often write $a_ib_i$ for the scalar product $\ip{a}{b}$. More generally, if $A$ and $B$ are two square symmetric matrices of size $d\times d$, we write $A_{ij}B_{ij}$ for ${\rm Tr}(AB)$.  
	
To avoid further difficulties arising from boundary issues, we work in the flat $d-$dimen\-sional torus $\bb{T}^d=\bb{R}^d\backslash \bb{Z}^d$. We denote by $P(\bb{T}^d)$ the set of Borel probability measures over $\bb{T}^d$. It is endowed with the weak convergence. For $k,n\in\bb{N}$ and $T>0$, we denote by ${\mathcal C}^k(Q, \bb{R}^n)$ the space of maps $\phi=\phi(t,x)$ of class ${\mathcal C}^k$ in time and space with values in $\bb{R}^n$. For $p\in [1,\infty]$ and $T>0$, we denote by $L^p(\bb{T}^d)$ and $L^p(Q)$ the set of $p-$integrable maps over $\bb{T}^d$ and $Q$ respectively. We often abbreviate $L^p(\bb{T}^d)$ and $L^p(Q)$ into  $L^p$. We denote by $\|f\|_p$ the $L^p-$norm of a map $f\in L^p$. The conjugate of a real $p>1$ is denoted by $p'$, i.e.\@ $1/p + 1/p'= 1$.
	
\subsection{Assumptions} \label{sec:asms}
    
We now collect the assumptions on the ``congestion coupling" $f$, the ``aggregate control coupling" $\Psi$, the Hamiltonian $H$, and the initial and terminal conditions $m_0$ and $u_T$.

Along the article, we assume that there exist some constants $C_1 > 0$, $C_2 > 0$, $C_3 > 0$, $C_4 > 0$, $q> 1$, $r> 1$, and $s>1$ such that the following hypotheses hold true. We denote
\begin{equation*}
p= q'.
\end{equation*}

\begin{itemize}
\item[(H1)] (Conditions on the coupling)
\begin{itemize}
\item[$\bullet \ \ $] The map $f \colon \bb{T}^d\times [0,+\infty)\to \bb{R}$ is continuous in both variables, increasing with respect to the second variable $m$, and satisfies
\begin{equation}
\label{Hypf}
\frac{1}{C_1}|m|^{q-1}-C_1\leq f(x,m) \leq C_1 |m|^{q-1}+C_1 \qquad \forall m\geq 0 \;.
\end{equation}
Moreover $f(x,0)= 0$ for all $x \in \bb{T}^d$.
\item[$\bullet \ \ $] The map $\Psi \colon \bb{R}^k \rightarrow \bb{R}^k$ is the continuous gradient of some convex function $\Phi \colon \bb{R}^k \rightarrow \R$. Without loss of generality, we assume that $\Phi(0)= 0$. Moreover,
\begin{equation} \label{eq:growth_Phi}
\Phi(z) \leq C_2 |z|^s + C_2 \qquad \forall z \in \R^k.
\end{equation}
Changing $C_2$ if necessary, we have
\begin{equation*}
\Phi^*(z) \geq \frac{1}{C_2} |z|^{s'} - C_2 \qquad  \forall z \in \R^k.
\end{equation*}
If $\frac{1}{s} + \frac{1}{pr} < 1$, we assume that
\begin{equation} \label{eq:low_bound_phi}
\frac{1}{C_2} |z|^s - C_2 \leq \Phi(z)\qquad \forall z \in \R^k .
\end{equation}
\item[$\bullet \ \ $] The map $\phi \colon \bb{T}^d \to \s{L}(\bb{R}^d;\bb{R}^k)$ is continuously differentiable.
If $\frac{1}{s} + \frac{1}{pr} < 1$, we assume that it is constant.
\end{itemize}

\item[(H2)] (Conditions on the Hamiltonian) The Hamiltonian  $H \colon \bb{T}^d\times \bb{R}^d\to\bb{R}$ is continuous in both variables, convex and differentiable in the second variable, with $D_{\xi} H$ continuous in both variables, and has a superlinear growth in the gradient variable:
\begin{equation}\label{HypGrowthH}
\frac{1}{C_3} |\xi|^{r} - C_3 \leq H(x,\xi) \leq C_3|\xi|^r + C_3 \qquad \forall (x,\xi)\in \bb{T}^d\times \bb{R}^d.
\end{equation}
We note for later use that the Fenchel conjugate $H^*$ of $H$ with respect to the second variable is continuous and satisfies similar inequalities (changing $C_3$ if necessary):
\begin{equation}\label{HypHstar}
\frac{1}{C_3} |\xi|^{r'} -C_3 \leq H^*(x,\xi) \leq C_3|\xi|^{r'} + C_3 \qquad \forall (x,\xi)\in \bb{T}^d\times \bb{R}^d.
\end{equation}

\item[(H3)] (Conditions on $A$) There exists a Lipschitz continuous map $\Sigma:\bb{T}^d\to \bb{R}^{d\times D}$  such that $\Sigma\Sigma^T=A$ and such that
\begin{equation}\label{HypA}
|\Sigma(x)-\Sigma(y)|\leq C_4 |x-y|  \qquad \forall x,y\in \bb{T}^d.
\end{equation}

\item[(H4)] (Conditions on the initial and terminal conditions) $\phi_T:\bb{T}^d\to \bb{R}$ is of class ${\mathcal C}^2$, while $m_0:\bb{T}^d\to \bb{R}$ is a $C^1$ positive density (namely $m_0>0$ and $\int_{\bb{T}^d} m_0 \dif x=1$). 

\item[(H5)] (Restrictions on the exponents). We consider 4 cases, depending on whether $s' < r$ or $s' \geq r$ and whether $A$ is constant or not.
\begin{equation*}
\begin{array}{|c||c|c|}
\hline & & \\[-4mm]
 & \text{Case 1: } s'<r & \text{Case 2: } s' \geq r \\ \hline \hline & & \\[-4mm]
\begin{array}{c} \text{Case A:} \\ \! \! \! \! \text{$A$ is not constant} \! \! \! \! \end{array} & s' \geq p & r \geq p \\ \hline & & \\[-3mm]
\begin{array}{c} \text{Case B:} \\ \text{$A$ is constant} \end{array} & {\displaystyle \frac{s'(d+1)}{d} \geq p } &
\begin{array}{c} \! \! \big[ s' \geq 1+d \big] \quad \text{or} \ \\[1mm] \big[ s' < 1+d \ \text{ and } \ \frac{s'(1+d)}{d-s'+1} > p \big] \end{array} \\[5mm] \hline
\end{array}
\end{equation*}
\end{itemize}

\begin{remark} 
\begin{enumerate}
\item The condition $f(x,0)= 0$ is just a normalization condition, which we may assume without loss of generality, as explained in \cite[Section 2]{cardaliaguet2015second}.
\item
Let us compare the different cases of Assumption (H5).
\begin{enumerate}
\item Assumption (H5) is stronger in cases 1A and 2A than in cases 1B and 2B, respectively, that is, Assumption (H5) is stronger in the case of a non-constant $A$ than in the constant case.
\item If $A$ is not constant (cases 1A and 2A), than Assumption (H5) can be summarized by $\min(s',r) \geq p$.
\item If $A$ is constant, it is easy to verify that Assumption (H5) is stronger in the case 1B $(s'<r)$ than in case 2B $(s' \geq r)$.
\end{enumerate}
\item If $\Psi=0$, then we are back to the framework of \cite{cardaliaguet2015second} and our assumptions coincide. Indeed, \eqref{eq:growth_Phi} is then satisfied with any $s > 1$. Taking $s$ sufficiently close to 1, we have $1/s + 1/(rp) \geq 1$, so that \eqref{eq:low_bound_phi} is not necessary, and we have $s' \geq r$, so that we are either in case 2A or 2B in hypothesis (H5). If $A$ is constant, we must choose $s$ close enough to 1, so that $s' \geq 1+d$.
\end{enumerate}
\end{remark}
	
Let us set 
\begin{equation*}
F(x,m)=
\begin{cases} \begin{array}{ll}
{\displaystyle \int_0^m f(x,\tau)d\tau} & {\rm if }\; m\geq 0\\[1em]
+\infty & {\rm otherwise}.
\end{array}
\end{cases}
\end{equation*}
Then $F$ is continuous on $\bb{T}^d\times (0,+\infty)$, differentiable and strictly convex in $m$ and satisfies
\begin{equation}\label{HypGrowthF}
\frac{1}{C_1}|m|^{q} - C_1 \leq F(x,m) \leq  {C_1}|m|^{q}+C_1 \qquad \forall m\geq 0,
\end{equation}
changing $C_1$ if necessary.
Let $F^*$ be the Fenchel conjugate of $F$ with respect to the second variable. Note that $F^*(x,a)=0$ for $a\leq 0$ because $F(x,m)$ is nonnegative, equal to $+\infty$ for $m<0$, and null at zero.  Moreover, 
\begin{equation}\label{HypGrowthFstar}
\frac{1}{C_1}|a|^{p}-C_1\leq F^*(x,a) \leq  C_1 |a|^{p} + C_1 \qquad \forall a \geq 0,
\end{equation}
changing again $C_1$ if necessary.

\begin{remark}
	Most of the results in this paper hold also for time-dependent data, in particular when $f$ and $H$ depend on $t$.
	It suffices to have the estimates in this subsection hold uniformly with respect to $t$.
\end{remark}
	
\section{Two problems in duality} \label{sec:duality}

The approach that we follow consists in viewing the MFG system as an optimality condition for two convex problems, which we introduce now.

Let $\s{K}_0$ be the set of all triples $(u,P,\gamma) \in \mathcal{C}^2(Q) \times \mathcal{C}^0([0,T];\R^k) \times \mathcal{C}^0(Q)$ satisfying
\begin{equation}
\label{eq:HJ subsolution}
\left\{
\begin{array}{rcl}
-\partial_t u - A_{ij}\partial_{ij} u + H\del{x,D u(x,t) + \phi(x)^\intercal P(t)}
& = & \gamma,\\
u(x,T) &=& u_T(x).
\end{array}\right.
\end{equation}
The associated cost is given by
\begin{equation}
D(u,P,\gamma) = -\int_{\bb{T}^d} u(x,0)m_0(x)\dif x + \int_0^T \Phi^*\del{P(t)}\dif t + \iint_Q F^*\del{x,\gamma(x,t)}\dif x\dif t.
\end{equation}
The first problem is
\begin{equation} \label{eq:hjb_problem}
\inf_{(u,P,\gamma) \in \mathcal{K}_0} D(u,P,\gamma).
\end{equation}

We consider now the set $\s{K}_1$ of all pairs $(m,w) \in L^1(Q) \times L^1(Q;\R^d)$ such that $m \geq 0$ a.e., $\int_{\bb{T}^d} m(x,t) \dif x=1$ for a.e.\@ $t \in (0,T)$,
and such that the continuity equation
\begin{equation}
\label{eq:FP equation}
\left\{
\begin{array}{rcl}
\partial_t m - \partial_{ij}\del{A_{ij}m} + \nabla \cdot w
& = & 0,\\
m(x,0) &=& m_0(x)
\end{array}\right.
\end{equation}
holds in the sense of distributions.
For $(m,w) \in \s{K}_1$, consider the following cost functional:
\begin{equation}
\begin{array}{rl}
B(m,w) = & {\displaystyle \iint_Q \del{H^*\del{x,-\frac{w(x,t)}{m(x,t)}}m(x,t) + F\del{x,m(x,t)}} \dif x \dif t} \\[1.5em]
& \qquad {\displaystyle + \int_0^T \Phi\del{\int_{\bb{T}^d} \phi(x)w(x,t) \dif x}\dif t + \int_{\bb{T}^d} u_T(x)m(x,T)\dif x},
\end{array}
\end{equation}
where for $m(t,x)=0$, we impose that
\begin{equation*}
m(t,x) H^* \Big(x,-\frac{w(t,x)}{m(t,x)} \Big) =
\begin{cases}
\begin{array}{cl}
+ \infty & \text{if $w(t,x) \neq 0$} \\
0 & \text{if $w(t,x)= 0$}.
\end{array}
\end{cases}
\end{equation*}
Since $H^*$ and $F$ are bounded from below and $m \geq 0$, the first integral in $B$ is well defined in $\R \cup \{ + \infty \}$.
Concerning the second term in $B$, we simply need to observe that since $\Phi$ is convex, there exists a constant $C>0$ such that $\Phi(z) \geq -C(1 + |z|)$, for all $z \in \R^k$. For $w \in L^1(Q;\R^d)$, the term
\begin{equation*}
\int_{\bb{T}^d} \phi(x) w(x,\cdot) \dif x
\end{equation*}
is integrable in time and therefore
\begin{equation*}
\int_0^T \Phi\del{\int_{\bb{T}^d} \phi(x)w(x,t) \dif x}\dif t
\end{equation*}
is well-defined in $\R \cup \{ + \infty \}$.
For the third term, we refer the reader to \cite[Lemma 4.1]{cardaliaguet2015second}, where it is proved that for $(m,w) \in \s{K}_1$, $m$ can be seen as a continuous map from $[0,T]$ to $P(\bb{T}^d)$ for the Rubinstein-Kantorovich distance $\mathbf{d}_1$.
Finally, the second optimization problem is the following:
\begin{equation} \label{eq:fp_problem}
\inf_{(m,w) \in \mathcal{K}_1} B(m,w).
\end{equation}


\begin{lemma} \label{lemma:duality}
We have
\begin{equation*}
\inf_{(u,P,\gamma) \in \s{K}_0} D(u,P,\gamma) = -\min_{(m,w) \in \s{K}_1} B(m,w).
\end{equation*}
Moreover, the minimum in the right-hand side is achieved by a unique pair \( (m,w) \in \s{K}_1 \) satisfying
\begin{equation} \label{eq:bound_mw}
m \in L^q(Q), \quad
w \in L^{\frac{r'q}{r'+q-1}}(Q;\R^d), \quad \text{and} \quad
\int_{\bb{T}^d} \phi(x) w(x,\cdot) \dif x \in L^s((0,T);\R^k).
\end{equation}
\end{lemma}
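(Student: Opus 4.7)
My plan is to derive the identity as an application of the Fenchel--Rockafellar duality theorem, the main input being a convex reformulation of the term $F^*(x,\gamma)$ in $D$ that puts the primal into the standard $\inf\{\mathcal F(u,P) + \mathcal G(\Lambda(u,P))\}$ form. I set $\mathcal{E} := \mathcal{C}^2(Q) \times \mathcal{C}^0([0,T];\bb{R}^k)$ and introduce the bounded linear map $\Lambda\colon\mathcal{E}\to\mathcal{C}^0(Q;\bb{R}^d)\times\mathcal{C}^0(Q)\times\mathcal{C}^0([0,T];\bb{R}^k)\times\mathcal{C}^0(\bb{T}^d)$ by
\begin{equation*}
\Lambda(u,P) := \bigl(-(Du + \phi^\intercal P),\; -\partial_t u - A_{ij}\partial_{ij}u,\; P,\; u(\cdot,0)\bigr).
\end{equation*}
Define $\mathcal{F}(u,P) = 0$ if $u(\cdot,T)=u_T$ and $+\infty$ otherwise, and
\begin{equation*}
\mathcal{G}(\alpha,\beta,P,u_0) := \iint_Q F^*(x,\beta + H(x,-\alpha))\dif x\dif t + \int_0^T \Phi^*(P(t))\dif t - \int_{\bb{T}^d} u_0 m_0 \dif x ;
\end{equation*}
then $D(u,P,\gamma) = \mathcal{F}(u,P) + \mathcal{G}(\Lambda(u,P))$ for every $(u,P,\gamma)\in\s{K}_0$. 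The essential convexity input is the identity
\begin{equation*}
F^*\bigl(x,\beta + H(x,-\alpha)\bigr) = K^*(x,\alpha,\beta), \qquad K(x,m,w) := F(x,m) + m H^*\bigl(x,-\tfrac{w}{m}\bigr),
\end{equation*}
with the usual perspective conventions at $m = 0$. Since $K$ is convex and lower semicontinuous as the sum of $F$ and the perspective of the convex $H^*$, so is $K^*$, and hence $\mathcal{G}$ is convex and lower semicontinuous.

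For the qualification hypothesis I pick any smooth $\bar u$ with $\bar u(\cdot,T)=u_T$ and set $\bar P\equiv 0$: then $\Lambda(\bar u,0)$ is bounded and $\mathcal{G}$ is continuous at $\Lambda(\bar u,0)$, thanks to the continuity and polynomial growth of $F^*$ and $\Phi^*$. Fenchel--Rockafellar then yields
\begin{equation*}
\inf_{\s{K}_0}D = \max_\mu\bigl[-\mathcal{F}^*(-\Lambda^*\mu) - \mathcal{G}^*(\mu)\bigr],
\end{equation*}
with the maximum attained by some $\mu = (w,m,Q,v_0)$ in the dual of the target space (a priori Radon measures, which by the superlinear growth of $F$ and $H^*$ and the finiteness of the dual value turn out to be absolutely continuous with integrable densities). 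A pointwise Fenchel computation gives
\begin{equation*}
\mathcal{G}^*(\mu) = \iint_Q K(x,m,w)\dif x\dif t + \int_0^T \Phi(Q(t))\dif t
\end{equation*}
when $v_0 = -m_0$ and $m\geq 0$ (and $+\infty$ otherwise). Integration by parts then gives
\begin{equation*}
\langle\Lambda(u,P),\mu\rangle = \iint_Q u\bigl[\partial_t m - \partial_{ij}(A_{ij}m) + \nabla\cdot w\bigr] + \int_0^T P\cdot\Bigl[Q - \int_{\bb{T}^d}\phi w\dif x\Bigr]\dif t - \int_{\bb{T}^d} u_T m(\cdot,T)\dif x,
\end{equation*}
which shows that $\mathcal{F}^*(-\Lambda^*\mu)$ is finite exactly when $(m,w)\in\s{K}_1$ and $Q(t) = \int_{\bb{T}^d}\phi(x)w(x,t)\dif x$ for a.e.\ $t$, in which case it equals $-\int_{\bb{T}^d} u_T m(\cdot,T)\dif x$. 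Substituting, the dual problem becomes exactly $\min_{\s{K}_1}B(m,w)$, establishing both the duality identity and the existence of a minimizer.

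For uniqueness, $B$ is convex and strictly convex on $\{m>0\}$: $F(x,\cdot)$ is strictly convex, and the perspective of the strictly convex $H^*(x,\cdot)$ (strict convexity of $H^*$ follows from the differentiability of $H$ with superlinear growth) is strictly convex there. A standard midpoint argument then forces any two minimizers to share the same $m$, while finiteness of $B$ forces $w\equiv 0$ on $\{m=0\}$, yielding uniqueness of $(m,w)$. The integrability bounds follow from $B(m,w) < \infty$ and the growth estimates: \eqref{HypGrowthF} gives $m\in L^q(Q)$; \eqref{HypHstar} yields $\iint_Q |w|^{r'}m^{1-r'}\dif x\dif t < \infty$, and H\"older's inequality with conjugate exponents $r'/p$ and $r'/(r'-p)$ combines this with $m\in L^q$ to produce $w\in L^p(Q;\bb{R}^d)$ for $p = r'q/(r'+q-1)$; the $L^s$ bound on $\int_{\bb{T}^d}\phi w\dif x$ follows from the growth of $\Phi$ together with the previous bounds. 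The main delicate points are the verification of the qualification hypothesis in these non-reflexive function spaces and the careful integration-by-parts bookkeeping that converts the abstract Fenchel--Rockafellar dual into the concrete MFG constraints defining $\s{K}_1$.
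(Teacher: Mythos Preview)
Your approach is correct and essentially the same as the paper's: both apply Fenchel--Rockafellar duality via the key identity that the Legendre transform of $(m,w)\mapsto F(x,m)+mH^*(x,-w/m)$ is $(a,b)\mapsto F^*(x,-a+H(x,b))$, then extract the integrability of the minimizer from the growth hypotheses on $F$, $H^*$ and $\Phi$. The only differences are cosmetic---you place the $\Phi^*$ and initial-trace terms in $\mathcal G$ via a four-component $\Lambda$ rather than in $\mathcal F$, and you spell out the uniqueness argument (strict convexity of $F$ and of the perspective of $H^*$), which the paper leaves implicit; your final line on the $L^s$ bound for $\int_{\bb{T}^d}\phi w$ is a bit compressed, as the paper splits into the two cases $\frac1s+\frac1{pr}\ge 1$ (where $w\in L^\sigma\hookrightarrow L^s$) and $\frac1s+\frac1{pr}<1$ (where the assumed lower bound \eqref{eq:low_bound_phi} on $\Phi$ is used), but the substance is the same.
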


\begin{proof}
Following previous papers \cite{cardaliaguet2015weak,cardaliaguet2014mean,cardaliaguet2015second}, we look to apply the Fenchel-Rockafellar duality theorem. In order to do so, we reformulate the first optimization problem into a more suitable form. 
    
Let $E_0 = \s{C}^2(Q) \times \s{C}^0([0,T], \mathbb{R}^k)$ and $E_1 = \s{C}^0(Q) \times \s{C}^0(Q; \mathbb{R}^d)$. 
Define on \( E_0 \) the functional
\begin{equation*}
\mathcal{F}(u,P) = \int_0^T \Phi^*(P(t)) \dif t  -\int_{\mathbb{T}^d} u(0,x) m_0(x)  \ \dif x + \chi_S(u), \end{equation*}
where \( \chi_S \) is the convex characteristic function of the set \( S = \left\{ u \in E_0, u(T,\cdot)=u_T \right\} \), i.e., \( \chi_S(u) = 0 \) if \( u \in S \) and \( + \infty \) otherwise. For \( (a,b) \in E_1 \), we define
\begin{equation*}
\mathcal{G}(a,b) = \iint_Q F^*\del{x,-a+H(x,b)} \dif x \dif t.
\end{equation*}
The functional \( \mathcal{F} \) is convex and lower semi-continuous on \( E_0 \) while \( \mathcal{G} \) is convex and continuous on \( E_1 \). Let \( \Lambda : E_0 \to E_1 \) be the bounded linear operator defined by \( \Lambda( u, P) = ( \partial_t u + A_{ij} \partial_{ij} u, D u + \phi^\intercal P) \). We can observe that
\begin{equation*}
\inf_{ (u,P,\gamma) \in \mathcal{K}_0 } D(u,P,\gamma) = \inf_{(u,P) \in E_0} \left\{ \mathcal{F}(u,P)+\mathcal{G}(\Lambda(u,P)) \right\}.
\end{equation*}
In the interest of employing the Fenchel-Rockafellar duality theorem, note that \( \s{F}(u,P) < +\infty \) for \( (u_T, 0) \) and \( \s{G}(\Lambda(u,P)) \) is continuous at \( ( u_T, 0 ) \). This satisfies the duality theorem, and so 
\[ \inf_{(u,P) \in E_0} \left\{ \mathcal{F}(u,P)+\mathcal{G}(\Lambda(u,P)) \right\} = \max_{(m,w) \in E'_1} \left\{ -\mathcal{F}^*(\Lambda^*(m,w))-\mathcal{G}^*(-(m,w)) \right\}, \]
where \( E'_1 \) is the dual space of \( E_1 \), i.e., the set of vector valued Radon measures \( (m,w) \) over \( Q \) with values in \( \mathbb{R} \times \mathbb{R}^d \), \( E'_0 \) is the dual space of \( E_0 \), \( \Lambda^* : E'_1 \to E'_0 \) is the dual operator of \( \Lambda \) and \( \s{F}^* \) and \( \s{G}^* \) are the convex conjugates of \( \s{F} \) and \( \s{G} \) respectively. We now compute the relevant conjugate transforms.
\begin{align*}
& \mathcal{F}^*(\Lambda^*(m,w))
= \sup_{(u,P)} \Big\{ \langle (u,P), \Lambda^*(m,w) \rangle - \mathcal{F}(u,P) \Big\} \\
& \quad = \sup_{\substack{(u,P) \\ u \in S}} \Big\{ \langle \Lambda(u,P), (m,w) \rangle - \int_0^T \Phi^*(P(t)) \dif t + \int_{\mathbb{T}^d} u(0,x) m_0(x) \dif x \Big\} \\
& \quad = \sup_{\substack{(u,P) \\ u \in S}} \Big\{  \langle \partial_t u + A_{ij}\partial_{ij} u, m \rangle  + \langle D u, w \rangle + \langle \phi^\intercal P, w \rangle
 - \int_0^T \Phi^*(P(t)) \dif t + \int_{\mathbb{T}^d} u(0,x)m_0(x) \dif x \Big\} \\
& \quad = \sup_{\substack{(u,P) \\ u \in S}} \Big\{ \langle - \partial_t m + \partial_{ij}(A_{ij}m) - \nabla \cdot w , u  \rangle  +  \langle  \phi^\intercal P , w \rangle - \int_0^T \Phi^*(P(t)) \dif t + \int_{\mathbb{T}^d} u_T(x) m(T,x) \dif x \Big\}.
\end{align*}
%
It is evident here that if \( -\partial_t m + \partial_{ij}(A_{ij}  m) - \nabla \cdot w \neq 0 \) in the sense of distributions, this supremum is infinite. If the condition does hold however, the supremum no longer depends on \( u \), and so the calculation is reduced to
\begin{align*}
& \sup_{P} \, \Big\{ \int_0^T \Big( {\textstyle \int_{\bb{T}^d} } \langle \phi(x)^\intercal P(t), w(x,t) \rangle \dif x \Big) - \Phi^*(P(t))  \dif t \Big\} \\
& \qquad =
\sup_{P} \, \Big\{ \int_0^T \Big\langle P(t), {\textstyle \int_{\bb{T}^d} } \phi(x) w(x,t) \dif x \Big\rangle  - \Phi^*(P(t))  \dif t \Big\} \\
& \qquad = \sup_{P} \, \Big\{ \int_0^T \Phi \big( {\textstyle \int_{\mathbb{T}^d} } \phi(x) w(x,t) \dif x \big) \dif t \Big\}.
\end{align*}
Combined with the conditions above, we have
\begin{equation*}
\mathcal{F}^*(\Lambda^*(m,w)) = \begin{cases} {\displaystyle
\int_0^T \Phi \big( {\textstyle \int_{\mathbb{T}^d} } \phi w \big) + \int_{\mathbb{T}^d} u_T(x) m(T,x) \dif x, } & \text{ if $-\partial_t m + A_{ij} \partial_{ij} m - \nabla \cdot w = 0$,} \\[1em]
+\infty  &  \text{ otherwise.}
\end{cases}
\end{equation*}
Following \cite{cardaliaguet2015weak}, we have that $\mathcal{G}^*(m,w)= + \infty$ if $(m,w) \notin L^1(Q) \times L^1(Q;\R^d)$ and
\begin{equation*}
\mathcal{G}^*(m,w) = \iint_Q K^*(x,m(t,x),w(t,x)) \dif x \dif t
\end{equation*}
otherwise,
where \( K^* \) is given by 
\begin{equation*}
K^*(x,m,w) = \begin{cases}
 F(x,-m) - m H^* \Big( x, -\frac{w}{m} \Big) & \text{ if } m < 0 \\
 0 & \text{ if } m=0 \text{ and } w=0 \\
 +\infty & \text{ otherwise},
\end{cases}
\end{equation*}
it is the convex conjugate of 
\begin{equation*}
K(x,a,b) = F^*(x,-a+H(x,b)) \qquad \forall(x,a,b) \in \mathbb{T}^d \times \mathbb{R} \times \mathbb{R}^d.
\end{equation*}
Therefore
\begin{align*}
& \max_{(m,w) \in E'_1} \big\{ - \s{F}^*(\Lambda^*(m,w)) - \s{G}^*(-(m,w)) \big\} \\
& \qquad = \max \Big\{ \iint_Q -F(x,m) - mH^* \Big( x,-\frac{w}{m} \Big) \dif x \dif t - \int_0^T \Phi \left( {\textstyle \int_{\mathbb{T}^d} } \phi w \right) \dif t - \int_{\mathbb{T}^d} u_T(x) m(T,x) \dif x \Big\}
\end{align*}
with the last maximum taken over the \( L^1 \) maps \( ( m,w ) \) such that \( m \geq 0 \) a.e.\@ and
\begin{equation*}
-\partial_t m + \partial_{ij}(A_{ij}  m) - \nabla \cdot w = 0, \quad m(0) = m_0
\end{equation*}
holds in the sense of distributions. Since \( \int_{\mathbb{T}^d} m_0 = 1 \), it follows that \( \int_{\mathbb{T}^d} m(t) = 1 \) for any \( t \in [0,T] \).
Thus the pair \( (m,w) \) belongs to the set \( \s{K}_1 \) and the first part of the statement is proved.

It remains to show \eqref{eq:bound_mw}.
Taking an optimal \( (m,w) \in \s{K}_1 \) in the above problem, we have that \( w(t,x) = 0 \) for all \( (t,x) \in [0,T] \times \mathbb{T}^d \) whenever \( m(t,x) = 0 \). 
By convexity of $\Phi$, we have
\begin{equation} \label{eq:lower_bound_phi}
\int_0^T \Phi \big( {\textstyle \int_{\bb{T}^d} } \phi w \dif x \big) \dif t
\geq \int_0^T \Phi(0) + \big\langle \Psi(0),{\textstyle \int_{\bb{T}^d} \phi w \dif x } \big\rangle \dif t
\geq C - C \| w \|_1.
\end{equation}
Moreover, by H\"older's inequality,
\begin{align*}
\| w \|_1
= \iint_Q \frac{|w|}{m} m \dif x \dif t
\leq \Big( \iint_Q \Big( \frac{|w|}{m} \Big)^{r'} m \Big)^{1/r'}.
\end{align*}
It follows with Young's inequality that
\begin{align*}
\| w \|_1
\leq \frac{\varepsilon}{r'} \iint_Q |w|^{r'} m^{1-r'} \dif x \dif t + \varepsilon^{-(r-1)},
\end{align*}
for any $\varepsilon > 0$. Therefore,
\begin{equation*}
\int_0^T \Phi \big( {\textstyle \int_{\bb{T}^d} } \phi w \dif x \big) \dif t
\geq C - \frac{\varepsilon C}{r'} \iint_Q |w|^{r'} m^{1-r'} \dif x \dif t - \varepsilon^{-(r-1)}.
\end{equation*}
Using successively the optimality of $(m,w)$, the growth conditions on $F$ and $H^*$ and the above inequality, we obtain
\begin{align*}
C & \geq \iint_Q F(x,m) + m H^* \Big( x, -\frac{w}{m} \Big) \dif x \dif t  + \int_0^T \Phi \big( {\textstyle \int_{\mathbb{T}^d} } \phi w \big) \dif t + \int_{\mathbb{T}^d} u_T(x) m(T,x) \dif x \\
& \geq \frac{1}{C} \| m \|_q^q
+ \Big( \frac{1}{C} - \varepsilon C \Big) \iint_Q |w|^{r'} m^{1-r'} \dif x \dif t - \varepsilon^{-(r-1)}-C,
\end{align*}
for some constant $C > 0$ independent of $\varepsilon$. Choosing $\varepsilon > 0$ sufficiently small, we deduce that $m \in L^q(Q)$ and that $|w|^{r'} m^{1-r'} \in L^1(Q)$.
%
%
To investigate the claim on \( w \) given in the statement, write, for some \( \rho \) to be determined,
\begin{equation*}
\| w \|_{\rho r'}^{r'} = \enVert{ m^{r'-1} \frac{|w|^{r'}}{m^{r'-1}}}_\rho \leq  \enVert{m^{r'-1}}_{\frac{q}{r'-1}}  \enVert{\frac{|w|^{r'}}{m^{r'-1}}}_1 = \| m \|_{q}^{r'-1} \enVert{\frac{|w|^{r'}}{m^{r'-1}}}_1 < \infty.
\end{equation*}
For this interpolative H{\"o}lder inequality to hold, we must have \( \rho = \frac{q}{r'+q-1} \). Thus $w \in L^{\sigma}(Q; \R^d)$, with
\begin{equation}
\sigma= \frac{r'q}{r'+q-1}, \quad \text{i.e. }
\sigma'= rp.
\end{equation}
Two cases must be considered. If $\frac{1}{s} \geq 1- \frac{1}{rp} = \frac{1}{\sigma}$, then we have $\sigma \geq s$, thus $w \in L^s(Q;\R^d)$ and \eqref{eq:bound_mw} follows. In the other case, we have by Hypothesis (H1) the growth assumption $\Phi(z) \geq \frac{1}{C} |z|^s - C$. It can be employed to get a better bound from below in \eqref{eq:lower_bound_phi}. We obtain \eqref{eq:bound_mw} with a straightforward adaptation of the above proof.
\end{proof}

\section{Optimal control problem of the HJ equation} \label{sec:opt ctrl hj}

In general we do not expect problem \eqref{eq:hjb_problem} to have a solution. 
In this section we exhibit a relaxation for \eqref{eq:hjb_problem} (Proposition \ref{prop:justif_relaxation}) and show that the obtained relaxed problem has at least one solution (Proposition \ref{prop:existence_solution}).

\subsection{Estimates on subsolutions to HJ equations} \label{subsection:estimates_hj}

In this subsection we prove estimates in Lebesgue spaces for subsolutions of Hamilton-Jacobi equations of the form
\begin{equation} \label{eq:subsolution_HJ}
\begin{cases}
\begin{array}{cl}
(i) & - \partial_t u - A_{ij} \partial_{ij} u + H( Du + \phi^\intercal P) \leq \gamma \\
(ii) & u(x,T) \leq u_T(x)
\end{array}
\end{cases}
\end{equation}
in terms of Lebesgue norms of $\gamma$, $u_T$, and $P$. Equation \eqref{eq:subsolution_HJ} is understood in the sense of distributions. This means that $D u + \phi^\intercal P \in L^r(Q)$ and for any nonegative test function $\zeta \in C_c^\infty((0,T] \times \bb{T}^d)$,
\begin{equation} \label{eq:weak_hj_def}
- \int_{\bb{T}^d} \zeta(T) u_T
+ \iint_Q u \partial_t \zeta
+ \langle D \zeta, A D u \rangle
+ \zeta \big( \partial_i A_{ij} \partial_j u + H(D u + \phi^{\intercal} P) \big)
\leq
\iint_Q \gamma \zeta.
\end{equation}

Let us introduce some notation.
For all $\tilde{r} > 1$ and for all $\tilde{p} \geq 1$, let us define $\bar{\kappa}(\tilde{r},\tilde{p})$ and $\bar{\eta}(\tilde{r},\tilde{p})$ by
\begin{equation*}
\bar{\eta}(\tilde{r},\tilde{p})
= \frac{d(\tilde{r}(\tilde{p}-1)+1)}{d-\tilde{r}(\tilde{p}-1)}
\quad \text{and} \quad
\bar{\kappa}(\tilde{r},\tilde{p})
= \frac{\tilde{r} \tilde{p}(1+d)}{d-\tilde{r}(\tilde{p}-1)}
\end{equation*}
if $\tilde{p} < 1 + \frac{d}{\tilde{r}}$ and
\begin{equation*}
\bar{\eta}(\tilde{r},\tilde{p})= \infty \quad \text{and} \quad
\bar{\kappa}(\tilde{r},\tilde{p})= \infty
\end{equation*}
if $\tilde{p} > 1 + \frac{d}{\tilde{r}}$. In the border line case $\tilde{p}= 1 + \frac{d}{\tilde{r}}$, $\bar{\eta}(\tilde{r},\tilde{p})$ and $\bar{\kappa}(\tilde{r},\tilde{p})$ can be fixed to arbitrarily large values.
We let the reader verify that
\begin{equation} \label{eq:low_bound_kappa}
\bar{\kappa}(\tilde{r},\tilde{p}) \geq \tilde{r} \quad \text{and} \quad
\bar{\kappa}(\tilde{r},\tilde{p}) \geq \tilde{p},
\end{equation}
assuming that the assigned value to $\bar{\kappa}(\tilde{r},\tilde{p})$ is large enough in the border line case.

We now restate \cite[Theorem 3.3]{cardaliaguet2015second}, since it will prove useful below.
\begin{theorem} \label{theo:upper_bound_hj}
Let \( u \) satisfy
\begin{equation} \label{eq:subsolution_HJ-1}
\begin{cases}
\begin{array}{cl}
(i) & - \partial_t u - A_{ij} \partial_{ij} u + \frac{1}{K}\abs{D u}^{\tilde r} \leq \gamma \\
(ii) & u(x,T) \leq u_T(x)
\end{array}
\end{cases}
\end{equation}
in the sense of distributions, with \( \gamma \in L^{\tilde p}(Q) \) for some $\tilde p \geq 1$ and $\tilde r > 1$.
Then
\begin{equation*}
\| u_+ \|_{L^\infty((0,T),L^\eta(\bb{T}))}
+
\| u_+ \|_{L^\kappa(Q)}
\leq C,
\end{equation*}
where $u_+ := \max\{u,0\}$, $\kappa= \bar{\kappa}(\tilde{r},\tilde{p})$, $\eta= \bar{\eta}(\tilde{r},\tilde{p})$.
The constant \( C \) depends only on $T,d,\tilde r,\tilde p,$ $K$ (appearing in \eqref{eq:subsolution_HJ-1}), $C_4$ (appearing in Hypothesis (H3)), $\| \gamma \|_{\tilde{p}}$, and $\enVert{u_T}_{\eta}$.
\end{theorem}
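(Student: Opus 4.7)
Since this result restates \cite[Theorem 3.3]{cardaliaguet2015second}, my plan is to reproduce the strategy of that paper, which is a classical Lions-type nonlinear absorption argument adapted to degenerate diffusions. The idea is to test the subsolution inequality against a power $u_+^{\eta-1}$, use the nonlinear gradient term $|Du|^{\tilde r}$ to pay for a Sobolev-type gain on $u_+^\theta$ for a suitable $\theta$, and then balance exponents against the Hölder estimate on the forcing $\gamma \in L^{\tilde p}$. The two exponents $\bar{\eta}(\tilde r,\tilde p)$ and $\bar{\kappa}(\tilde r,\tilde p)$ are exactly those for which the scaling matches.

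\textbf{Energy step.} After regularizing $u$ (via convolution in space, plus a smooth cut-off of $s \mapsto s_+^{\eta-1}$), I would multiply $(i)$ by $u_+^{\eta-1}$ and integrate on $\bb{T}^d \times (t,T)$, using $(ii)$ to control the boundary contribution at $T$. Integration by parts of the second-order term yields
\begin{equation*}
\tfrac{1}{\eta}\|u_+(t)\|_\eta^\eta + (\eta-1)\iint_t^T u_+^{\eta-2} A_{ij}\partial_i u\,\partial_j u + \tfrac{1}{K}\iint_t^T u_+^{\eta-1}|Du|^{\tilde r} \leq \tfrac{1}{\eta}\|u_T\|_\eta^\eta + \iint_t^T u_+^{\eta-1}\gamma - \iint_t^T u_+^{\eta-1}(\partial_i A_{ij})\partial_j u.
\end{equation*}
The $A_{ij}$-quadratic term is nonnegative, and the last (drift) term, which is absent when $A = I$, is the delicate piece. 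Using Hypothesis (H3) ($\Sigma$ Lipschitz, hence $\partial_i A_{ij} \in L^\infty$) and Young's inequality with exponents $(\tilde r,\tilde r')$, I would bound it by $\varepsilon \iint u_+^{\eta-1}|Du|^{\tilde r} + C_\varepsilon \iint u_+^{\eta-1+?}$ and absorb the first piece into the good $|Du|^{\tilde r}$ term on the left.

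\textbf{Sobolev step and closing.} Writing $u_+^{\eta-1}|Du|^{\tilde r} = c_{\eta,\tilde r}\bigl|D(u_+^\theta)\bigr|^{\tilde r}$ with $\theta = (\eta + \tilde r - 1)/\tilde r$, the energy inequality gives a uniform bound on $D(u_+^\theta)$ in $L^{\tilde r}$, plus a uniform bound on $\sup_t \|u_+(t)\|_\eta$. Gagliardo--Nirenberg interpolation applied to $u_+^\theta$, combined with this $L^\infty_tL^{\eta/\theta}_x$ control, then yields a space-time bound $\|u_+^\theta\|_{L^{\tilde q}(Q)} \leq C$ for the parabolic exponent $\tilde q = \tilde r(d+1)/d$ (in the subcritical regime $\tilde p < 1 + d/\tilde r$), producing $u_+ \in L^\kappa(Q)$ with $\kappa = \theta \tilde q$. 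To close the estimate, the forcing term $\iint u_+^{\eta-1}\gamma$ is controlled by Hölder against $\|\gamma\|_{\tilde p}$, and the resulting exponent on $u_+$ must fit inside $L^\kappa$; the choice $\eta = \bar\eta(\tilde r,\tilde p)$ is precisely what makes Hölder, Gagliardo--Nirenberg, and the absorption balance simultaneously, and a direct computation recovers the stated formulas for $\bar\eta$ and $\bar\kappa$. In the supercritical regime $\tilde p > 1 + d/\tilde r$, I would iterate the estimate (Moser-style) over an increasing sequence of $\eta$'s to reach $\kappa = \eta = \infty$.

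\textbf{Main obstacle.} The genuinely nontrivial point, beyond reproducing the constant-coefficient argument, is twofold: (a) the drift term $\partial_i A_{ij} \partial_j u$ in the energy estimate must be absorbed without destroying the fine exponent balance, which works because the bad term is of lower order in $Du$ than the good one and Hypothesis (H3) provides the needed bound on $\partial_i A_{ij}$; (b) the entire manipulation must be justified starting from the weak formulation \eqref{eq:weak_hj_def}, i.e.\ for a distributional subsolution with $Du \in L^{\tilde r}$ only, which requires a careful mollification-and-truncation argument where the nonlinear term $|Du|^{\tilde r}$ is passed to the limit via Fatou and the lower semicontinuity of convex integrands. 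Once these two technicalities are handled, the exponent arithmetic and the interpolation are standard.
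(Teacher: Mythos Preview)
The paper does not give its own proof of this theorem: it simply restates \cite[Theorem 3.3]{cardaliaguet2015second} and cites that reference, adding only the remark that the case $\tilde p = 1$ also goes through. So there is no in-paper proof to compare your proposal against. Your sketch is a faithful outline of the argument in \cite{cardaliaguet2015second} (test against $u_+^{\eta-1}$, absorb the drift term via Young and Hypothesis (H3), rewrite the gradient term as $|D(u_+^\theta)|^{\tilde r}$, then close with Gagliardo--Nirenberg and H\"older), and in that sense it matches what the paper is implicitly relying on.

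One small point worth tightening in your sketch: in the absorption of the drift term, the leftover after Young with exponents $(\tilde r,\tilde r')$ is $C_\varepsilon \iint u_+^{\eta-1}$, not $u_+^{\eta-1+?}$; this lower-order term is what gets handled by the $L^\kappa$ bound you are building, and checking that $\eta - 1 < \kappa$ (so it can be absorbed after interpolation and a Young inequality in the final step) is part of the exponent bookkeeping you allude to. Otherwise your identification of the two genuine technicalities---the variable-coefficient drift and the justification at the level of distributional subsolutions---is exactly right.
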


\begin{remark}
Although the case $\tilde p = 1$ is not explicitly mentioned in \cite[Theorem 3.3]{cardaliaguet2015second}, it is not hard to check that the theorem also applies in that case.
\end{remark}

\begin{corollary} \label{cor:a priori bound1}
Let $P \in \mathcal{C}^0([0,T];\R^k)$ and $\gamma \in \mathcal{C}^0(Q;\R)$. Let $u$ be the viscosity solution to the HJ equation
\begin{equation} \label{eq:HJB_coro}
	\left\{
	\begin{array}{rcl}
	-\partial_t u - A_{ij}\partial_{ij} u + H\del{x,D u(x,t) + \phi(x,t)^\intercal P(t)}
	& = & \gamma,\\
	u(x,T) &=& u_T(x).
	\end{array}\right.
\end{equation}
Let $\tilde{r} > 1$. Define
	\begin{equation} \label{eq:kappa1}
	\kappa= \bar{\kappa}(\tilde{r},1)= \frac{\tilde r(1+d)}{d}.
	\end{equation}
Then 
	\begin{equation*}
	\| u \|_{L^\infty((0,T),L^1(\bb{T}))}
	+
	\| u \|_{L^\kappa(Q)}
	\leq C,
	\end{equation*}
	where the constant \( C \) depends only on $T,d,\tilde r,\tilde p$, $C_4$ (appearing in Hypothesis (H3)), $\| \gamma \|_{1}$, $\enVert{D u}_{\tilde r}$, $\| H(D u + \phi^\intercal P) \|_{1}$,  and $\enVert{u_T}_{1}$.
\end{corollary}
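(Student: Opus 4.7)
The idea is to reduce to Theorem \ref{theo:upper_bound_hj} twice: once for $u$ itself (to control $u_+$), and once for $-u$ (to control $u_- = (-u)_+$). Since the constant in the corollary is allowed to depend on $\|\gamma\|_1$, $\|Du\|_{\tilde r}$, and $\|H(Du + \phi^\intercal P)\|_1$, we can afford to absorb the Hamiltonian and gradient terms into the right-hand side.

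First, starting from \eqref{eq:HJB_coro}, I rewrite the equation as
\begin{equation*}
-\partial_t u - A_{ij}\partial_{ij} u + \tfrac{1}{K} |Du|^{\tilde r}
\;=\; \gamma - H(x,Du+\phi^\intercal P) + \tfrac{1}{K}|Du|^{\tilde r}
\;\leq\; \tilde{\gamma},
\end{equation*}
where $K>0$ is arbitrary (to fit the normalization of Theorem \ref{theo:upper_bound_hj}) and
\begin{equation*}
\tilde{\gamma} := |\gamma| + |H(x,Du+\phi^\intercal P)| + \tfrac{1}{K}|Du|^{\tilde r}.
\end{equation*}
By hypothesis $\tilde{\gamma}\in L^1(Q)$, with $\|\tilde\gamma\|_1 \leq \|\gamma\|_1 + \|H(Du+\phi^\intercal P)\|_1 + \tfrac{1}{K}\|Du\|_{\tilde r}^{\tilde r}$, a quantity bounded in terms of the data listed in the statement. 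Since the terminal datum is $u(\cdot,T)=u_T$, Theorem \ref{theo:upper_bound_hj} applied with the exponent pair $(\tilde r,\tilde p)=(\tilde r,1)$ and the subsolution inequality above yields
\begin{equation*}
\|u_+\|_{L^\infty((0,T),L^1(\bb{T}^d))} + \|u_+\|_{L^{\kappa}(Q)} \leq C,
\end{equation*}
with $\kappa = \bar\kappa(\tilde r,1) = \tilde r(1+d)/d$, as claimed.

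For the lower bound, I set $\tilde u := -u$, which satisfies in the sense of distributions
\begin{equation*}
-\partial_t \tilde u - A_{ij}\partial_{ij}\tilde u + \tfrac{1}{K}|D\tilde u|^{\tilde r}
= H(x,Du+\phi^\intercal P) - \gamma + \tfrac{1}{K}|Du|^{\tilde r} \leq \tilde\gamma,
\end{equation*}
where $\tilde\gamma$ is exactly the same function as above, and terminal condition $\tilde u(\cdot,T)= -u_T$. Applying Theorem \ref{theo:upper_bound_hj} to $\tilde u$ with $(\tilde r,\tilde p)=(\tilde r,1)$ and terminal datum $-u_T$ (whose $L^1$-norm equals $\|u_T\|_1$) gives a bound on $\tilde u_+ = u_-$ of the same form. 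Adding the two inequalities yields the desired control on $|u|= u_+ + u_-$ in both $L^\infty_t L^1_x$ and $L^\kappa(Q)$.

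The step I expect to require a small amount of care is checking that the viscosity solution $u$ is regular enough that the distributional inequalities above are actually admissible test cases for Theorem \ref{theo:upper_bound_hj}; this is where the finiteness of $\|Du\|_{\tilde r}$ and $\|H(Du+\phi^\intercal P)\|_1$ enters. Given continuity of $\gamma,P,\phi$ and the growth hypothesis (H2) on $H$, the viscosity solution is locally Lipschitz in $x$ on $Q$ (by standard viscosity theory for possibly degenerate HJ equations), so the equation is satisfied in the sense of distributions \eqref{eq:weak_hj_def}. Once this is granted, the rest is purely algebraic and reduces to a direct invocation of Theorem \ref{theo:upper_bound_hj} twice.
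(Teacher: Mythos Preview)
Your argument is correct and follows essentially the same route as the paper: rewrite the equation so that the left-hand side has the form $-\partial_t u - A_{ij}\partial_{ij}u + \tfrac{1}{K}|Du|^{\tilde r}$, absorb the Hamiltonian into an $L^1$ right-hand side, and apply Theorem \ref{theo:upper_bound_hj} with $\tilde p = 1$ to both $u$ and $-u$. The only minor divergence is in the last paragraph: the paper simply invokes \cite{ishii95} to pass from viscosity to distributional sense, whereas your claim that the viscosity solution is locally Lipschitz in $x$ is not guaranteed for degenerate second-order equations and is in any case stronger than needed---citing Ishii's result directly is cleaner.
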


\begin{proof}
By \cite{ishii95}, $u$ also satisfies the HJ equation in the sense of distributions. Observe that \eqref{eq:HJB_coro} can be rewritten
	\begin{equation}
	- \partial_t u - A_{ij} \partial_{ij} u + \abs{D u}^{\tilde r} = \gamma - H(D u + \phi^\intercal P) + \abs{D u}^{\tilde r}.
	\end{equation}
The $L^1$ norm of the right-hand side depends on $\| \gamma \|_{1}$, $\| H(D u + \phi^\intercal P) \|_{1}$, and $\enVert{D u}_{\tilde r}$.
Similarly, $-u$ is a weak subsolution of a HJ equation with right-hand side
\begin{equation*}
-\gamma + H(D u + \phi^\intercal P) + | D u |^{\tilde{r}}.
\end{equation*}
By applying Theorem \ref{theo:upper_bound_hj} to both $u$ and $-u$, we deduce the desired estimate.
\end{proof}

When $s' \geq r$, the growth assumption on the Hamiltonian (Hypothesis (H2)) can be exploited to derive a more precise estimate on the solution to \eqref{eq:HJB_coro}. 

\begin{corollary} \label{cor:a priori bound2}
Let $P$, $u$, and $\gamma$ be as in Corollary \ref{cor:a priori bound1}. Assume moreover that $\gamma \geq 0$ and $s' \geq r$. Take $\tilde{r} \in (1,r]$ and define
\begin{equation*}
\tilde{p} = \min \Big( p,\frac{s'}{\tilde r} \Big), \quad
\kappa = \bar{\kappa}(\tilde{r},\tilde{p}), \quad \text{and} \quad
\eta= \bar{\eta}(\tilde{r},\tilde{p}).
\end{equation*}
Then
\begin{equation*}
\| u\|_{L^\infty((0,T),L^\eta(\bb{T}^d))}
+
\| u \|_{L^\kappa(Q)}
\leq C.
\end{equation*}
The constant \( C \) depends only on $T,d,\tilde r,\tilde p,$ $C_3$ (appearing in Hypothesis (H2)), $C_4$ (appearing in Hypothesis (H3)), $\| \gamma \|_{p}$, $\enVert{P}_{s'}$, and $\| u_T \|_{\eta}$.
\end{corollary}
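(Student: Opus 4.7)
The plan is to reduce the corollary to Theorem \ref{theo:upper_bound_hj} by recasting the HJ equation \eqref{eq:HJB_coro} as a differential inequality of the form \eqref{eq:subsolution_HJ-1} with $|Du|^{\tilde r}$ on the left and a right-hand side in $L^{\tilde p}$, and then to treat the lower bound on $u$ separately via a comparison argument. First, using the lower bound $H(x,\xi) \geq C_3^{-1}|\xi|^r - C_3$ from (H2), equation \eqref{eq:HJB_coro} implies, in the sense of distributions,
\[
-\partial_t u - A_{ij}\partial_{ij}u + \tfrac{1}{C_3}|Du + \phi^\intercal P|^r \leq \gamma + C_3.
\]
For a fixed $\tilde r \in (1,r]$, combining the triangle inequality, Young's inequality, and the crude bound $t^{\tilde r} \leq t^r + 1$ (valid for $t\geq 0$) yields $|Du|^{\tilde r} \leq C(|Du + \phi^\intercal P|^r + |\phi^\intercal P|^{\tilde r} + 1)$, which can be rearranged to give
\[
-\partial_t u - A_{ij}\partial_{ij}u + \tfrac{1}{K}|Du|^{\tilde r} \leq \tilde \gamma, \qquad \tilde\gamma := \gamma + C\bigl(1 + |\phi^\intercal P|^{\tilde r}\bigr),
\]
for constants $K,C$ depending only on $C_3, r, \tilde r$.

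Next I would verify that $\tilde\gamma \in L^{\tilde p}(Q)$. Since $\phi$ is continuous on $\bb{T}^d$ it is bounded, so $|\phi^\intercal P|^{\tilde r}$ is pointwise controlled by a constant times $|P|^{\tilde r}$. The standing assumption $s' \geq r \geq \tilde r$ gives $s'/\tilde r \geq 1$, so $|P|^{\tilde r} \in L^{s'/\tilde r}((0,T))$ with norm bounded in terms of $\|P\|_{s'}$ and $T$. Combined with $\gamma \in L^p(Q)$, this shows that $\tilde\gamma \in L^{\tilde p}(Q)$ for $\tilde p = \min(p,s'/\tilde r)$, and its $L^{\tilde p}$-norm is under control of the prescribed data. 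Applying Theorem \ref{theo:upper_bound_hj} to the above differential inequality with parameters $(\tilde r, \tilde p)$ and constant $K$ yields
\[
\|u_+\|_{L^\infty((0,T);L^\eta(\bb T^d))} + \|u_+\|_{L^\kappa(Q)} \leq C,
\]
where $\eta = \bar\eta(\tilde r,\tilde p)$, $\kappa = \bar\kappa(\tilde r,\tilde p)$, with $C$ depending only on the data listed in the statement.

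For the lower bound on $u$, the hypothesis $\gamma \geq 0$ together with $H(x,\xi)\geq -C_3$ suggests the explicit subsolution
\[
\underline u(t) := \min_{\bb{T}^d} u_T - \int_t^T \bigl(C_3 + C_3 \max_{x\in\bb T^d}|\phi(x)^\intercal P(\tau)|^r\bigr)\,\mathrm d\tau,
\]
which is a classical subsolution of \eqref{eq:HJB_coro} with $\underline u(T) \leq u_T$. The viscosity comparison principle for HJ equations then gives $u \geq \underline u$, and by H\"older's inequality (using $s' \geq r$) we have $\int_0^T |P|^r \leq T^{1-r/s'}\|P\|_{s'}^r$, so $u_-$ is uniformly bounded in terms of the data. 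Combined with the bound on $u_+$ from Theorem \ref{theo:upper_bound_hj}, this gives the full estimate on $\|u\|_{L^\infty((0,T);L^\eta)} + \|u\|_{L^\kappa(Q)}$. The main obstacle is the algebraic bookkeeping in the first step: one must extract $|Du|^{\tilde r}$ from $|Du + \phi^\intercal P|^r$ while keeping the residual in the correct Lebesgue space. The assumption $s' \geq r$ is precisely what closes this balance, since it is what guarantees $s'/\tilde r \geq 1$ for every $\tilde r \in (1,r]$.
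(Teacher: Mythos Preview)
Your proposal is correct and follows essentially the same route as the paper: extract $|Du|^{\tilde r}$ from $|Du+\phi^\intercal P|^r$ via the lower growth bound on $H$, check the residual lies in $L^{\tilde p}$ using $s'\geq r\geq \tilde r$, apply Theorem~\ref{theo:upper_bound_hj} for the upper bound on $u_+$, and handle the lower bound by comparison with an explicit space-independent barrier. One small slip: in the comparison step you cite the lower bound $H(x,\xi)\geq -C_3$, but verifying that $\underline u$ is a subsolution actually requires the \emph{upper} bound $H(x,\phi^\intercal P)\leq C_3|\phi^\intercal P|^r+C_3$; your construction of $\underline u$ already reflects this, so only the justification needs correcting.
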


\begin{proof}
	We have $\gamma \geq 0$ and the upper bound
	\begin{equation*}
	H(D u + \phi^{\intercal} P)
	\leq C | D u |^r + C | P |^r,
	\end{equation*}
	therefore,
	\begin{equation} \label{eq:existence4}
	-\partial_t u - A_{ij} \partial_{ij} u + C | D u |^r \geq  - C |P|^r - C, \quad
	u(T,x) \geq \Big( \min_{x' \in \bb{T}^d} u_T(x') \Big).
	\end{equation}
	Let $\hat{u}$ be defined by
	\begin{equation*}
	\hat{u}(x,t)= \Big( \min_{x' \in \bb{T}^d} u_T(x') \Big)
	- \int_t^T |P(t)|^r \dif t - C(T-t),
	\end{equation*}
	which solves \eqref{eq:existence4} with inequality replaced by equality.
	By the comparison principle, $u \geq \hat u \geq -C$, where $C$ depends only on $T,$ the growth of $H$, $\min u_T$, and $\enVert{P}_{r}$.
	Note that $\enVert{P}_{r}$ depends only on $\enVert{P}_{s'}$ and $T$ because $s' \geq r$.
	
	Next, by the growth condition of \( H \) we have
	\[
	- \partial_t u - A_{ij} \partial_{ij}u + \frac{1}{C_3}|D u + \phi^\intercal P|^r - C_3 \leq \gamma. 
	\]
Observe that by Young's inequality,
\begin{align*}
| D u |^{\tilde{r}}
\leq 2^{\tilde r -1}| D u + \phi^\intercal P|^{\tilde{r}} + 2^{\tilde r -1}|\phi^\intercal P|^{\tilde{r}}
\leq 2^{\tilde r -1} \Big( \frac{\tilde{r}}{r} | D u + \phi^\intercal P|^{r} + 1  \Big) + C |P|^{\tilde{r}},
\end{align*}
since $r \geq \tilde{r}$.
It follows that
\begin{equation*}
| D u + \phi^\intercal P|^{r}
\geq \frac{1}{C}  |D u|^{\tilde{r}} - C |P|^{\tilde{r}} - C,
\end{equation*}
therefore,
\begin{equation} \label{thm31_1}
-\partial_t u - A_{ij} \partial_{ij} u + \frac{1}{C} |D u|^{\tilde{r}} \leq \gamma + C|P|^{\tilde r} + C.
\end{equation}
Since $|P|^{\tilde{r}}$ lies in $L^{s'/\tilde{r}}(Q)$, we have that the right hand side of \eqref{thm31_1} is bounded in \( L^{\tilde{p}} \). Combining this with the lower bound on $u$, the conclusion follows from Theorem \ref{theo:upper_bound_hj}.
\end{proof}

%
%

We can now fix the values of the coefficients $\tilde{r} \in (1,r]$, $\kappa>1$, and $\eta>1$ to be employed in the sequel, consistently with Corollary \ref{cor:a priori bound1} (if $s'<r$) and Corollary \ref{cor:a priori bound2} (if $s' \geq r$). As will appear later in the proofs of Lemma \ref{lem:intbyparts} and Proposition \ref{prop:existence_solution} , these coefficients must satisfy the following:
\begin{equation*}
\big[ s' \geq \tilde{r} \big], \quad \big[ \kappa \geq p \big], \quad \text{and} \quad
\big[ \text{$A$ is not constant $\Longrightarrow \tilde{r} \geq p$} \big].
\end{equation*}
This is the reason why four subcases have been introduced in Hypothesis (H5) and why we have a specific definition of the coefficients for each of the subcases. In order to deal with the case 2B, we need the following lemma.

\begin{lemma} \label{lemma:case2B}
Assume that $s' \geq r$ and $A$ is constant, that is, consider the case 2B of Assumption (H5).
Then the corresponding condition:
\begin{equation} \label{eq:conditionH5}
\Big[ s' \geq 1+d \Big] \quad
\text{or} \quad
\Big[ s' < 1+d \quad \text{and} \quad \frac{s'(1+d)}{d-s'+1} > p \Big]
\end{equation}
is satisfied if and only if there exists $\tilde{r} \in (1,r]$ such that
\begin{equation} \label{eq:existence_r}
\bar{\kappa} \Big( \tilde{r}, \min \big(p, {\textstyle \frac{s'}{\tilde{r}} } \big) \Big)
\geq p.
\end{equation}
\end{lemma}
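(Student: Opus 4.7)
The plan is to parametrize $\tilde{p} = \min(p, s'/\tilde{r})$ and reduce \eqref{eq:existence_r} to a one-variable maximization. Define $h(\tilde{r}) := \bar{\kappa}(\tilde{r}, \min(p, s'/\tilde{r}))$ and split $(1,r]$ into a \emph{flat} regime $\tilde{r} \leq s'/p$ (where $\tilde{p}=p$) and a \emph{hyperbolic} regime $\tilde{r} > s'/p$ (where $\tilde{p}=s'/\tilde{r}$). Provided the relevant denominators are positive, the flat formula $h(\tilde{r}) = \frac{\tilde{r} p (1+d)}{d - \tilde{r}(p-1)}$ is strictly increasing in $\tilde{r}$, and the hyperbolic formula $h(\tilde{r}) = \frac{s'(1+d)}{d - s' + \tilde{r}}$ is strictly decreasing; outside these regions $\bar{\kappa} = +\infty$ or can be made arbitrarily large by the borderline convention. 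The two formulas agree at the transition point $\tilde{r} = s'/p$.

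For the ($\Leftarrow$) direction, I split into cases. If $s' \geq 1+d$, then $(1,\min(r, s'-d)]$ is nonempty; for such $\tilde r$ we have $s'/\tilde r \geq 1 + d/\tilde r$ and $\bar{\kappa}$ can be chosen $\geq p$. Otherwise $s' < 1+d$ and $\frac{s'(1+d)}{d+1-s'} > p$. If $s' \leq p$, the flat regime is empty on $(1,r]$, and since $h(\tilde r)\to \frac{s'(1+d)}{d+1-s'}$ as $\tilde r \to 1^+$ along the decreasing hyperbolic branch, some $\tilde r \in (1,r]$ yields $h(\tilde r)\geq p$. If instead $s' > p$, the maximum of $h$ on $(1,r]$ is attained at $\tilde r^\ast := \min(s'/p,r)$: when $\tilde r^\ast = s'/p$ one computes $h(s'/p) = \frac{s'(1+d)}{d - s'(p-1)/p}$ and $h\geq p$ reduces to $s'(d+p) \geq dp$, which follows from $s' > p > \frac{dp}{d+p}$; when $\tilde r^\ast = r$ (so $s' > rp$) one has $h(r) = \frac{rp(1+d)}{d - r(p-1)}$ and $h \geq p$ reduces to $r(d+p) \geq d$, which holds because $r > 1$.

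For the ($\Rightarrow$) direction I argue by contraposition. If \eqref{eq:conditionH5} fails, then $s' < 1+d$ and $s'(1+d) \leq p(d+1-s')$, which rearranges to $s' \leq \frac{p(1+d)}{d+1+p} < p$. Hence $s'/p < 1 < \tilde r$ for every $\tilde r \in (1,r]$, so only the hyperbolic regime is visited; since $\tilde r > 1$ ensures $d - s' + \tilde r > d + 1 - s' > 0$, the formula applies and $h(\tilde r) < \frac{s'(1+d)}{d+1-s'} \leq p$, contradicting \eqref{eq:existence_r}.

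The main bookkeeping obstacle is the handling of the borderline/degenerate cases of $\bar\kappa$ (where $\tilde p = 1 + d/\tilde r$, or $s'=1+d$, or the selected $\tilde r^\ast$ falls at the endpoint $r$), together with verifying throughout that the candidate $\tilde r$ genuinely lies in $(1,r]$; the hypothesis $r > 1$ of Case 2B, rather than the precise value of $r$, is what actually drives the equivalence, which is why \eqref{eq:conditionH5} depends only on $s'$, $p$, and $d$.
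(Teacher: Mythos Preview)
Your argument is correct in substance and organized somewhat differently from the paper. The paper splits first on whether $s'>p$ or $s'\le p$: when $s'>p$ it simply takes $\tilde r=s'/p$ (or $\tilde r=r$ when $s'>rp$) and invokes the blanket inequality $\bar\kappa(\tilde r,p)\ge p$, bypassing the explicit computation you do at $\tilde r^\ast$; only in the case $s'\le p$ does it examine the hyperbolic formula $s'(1+d)/(d-s'+\tilde r)$. Your framing of $h$ as unimodal---increasing on the flat branch, decreasing on the hyperbolic branch, peaked at the transition---is a clean conceptual picture not present in the paper and makes your contrapositive especially transparent; the paper's route is a bit shorter in the $s'>p$ case but more ad hoc overall.

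One concrete oversight to fix rather than merely flag: your claim that $(1,\min(r,s'-d)]$ is nonempty for $s'\ge 1+d$ fails exactly at $s'=1+d$, since then $s'-d=1$. This is not a borderline-$\bar\kappa$ issue but an empty interval. The repair is one line: if $s'=1+d$ and $s'\le p$, the hyperbolic formula gives $h(\tilde r)=(1+d)^2/(\tilde r-1)\to\infty$ as $\tilde r\to 1^+$; if $s'=1+d$ and $s'>p$, your $\tilde r^\ast=\min(s'/p,r)$ argument already applies verbatim (the denominator $d-s'(p-1)/p$ is still positive because $p<s'=1+d$). With that patched, the proof is complete.
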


\begin{proof} Several cases must be distinguished.
\begin{itemize}
\item Case (i): $s' > p$. In that case, either $s' \geq 1+d$ or $s' < 1+d$ and then
\begin{equation*}
\frac{s'(1+d)}{d-s'+1} > s' > p.
\end{equation*}
Thus, if $s' > p$, then \eqref{eq:conditionH5} holds true.
Then we can set $\tilde{r}= \frac{s'}{p} > 1$. We have $\min \big( p, {\textstyle \frac{s'}{\tilde{r}}} \big)= p$ and therefore
$\kappa = \bar{\kappa}(\tilde{r},p) \geq p$, by inequality \eqref{eq:low_bound_kappa}.
\item Case (ii): $s' \leq p$. Then whatever the choice of $\tilde{r} \in (1,r]$, we have
\begin{equation*}
\tilde{p}(\tilde{r}):= \min \big( p, {\textstyle \frac{s'}{\tilde{r}}} \big) = {\textstyle \frac{s'}{\tilde{r}}}.
\end{equation*}
\begin{itemize}
\item Case (iia): $s' > 1+d$. Then we can chose $\tilde{r}$ sufficiently close to 1, so that
\begin{equation*}
\frac{s'-d}{\tilde{r}} > 1.
\end{equation*}
Then we have $\tilde{p}(\tilde{r}) > 1 + \frac{d}{\tilde{r}}$, thus $\kappa= \bar{\kappa}(\tilde{r},\tilde{p}(\tilde{r}))= \infty$.
\item Case (iib): $s' \leq 1+d$. Then whatever the choice of $\tilde{r} \in (1,r]$, we have
$\tilde{p}(\tilde{r}) < 1 + \frac{d}{\tilde{r}}$ and therefore, condition \eqref{eq:existence_r} is equivalent to:
\begin{equation*}
\exists \tilde{r} \in (1,r], \quad
\frac{s'(d+1)}{d+\tilde{r}-s'} \geq p.
\end{equation*}
The above condition is clearly satisfied if and only if either $s'= 1+d$ or $s'<1+d$ and
$\frac{s'(d+1)}{d+1-s'} > p$.
\end{itemize}
\end{itemize}
\end{proof}

We can finally fix $\tilde{r}$, $\kappa$, and $\eta$.
\begin{itemize}
\item In cases 1A and 1B (i.e.\@ $s'<r$), we set
\begin{equation*}
\tilde{r}= s', \quad \kappa= \bar{\kappa}(\tilde{r},1), \quad \eta= \bar{\eta}(\tilde{r},1).
\end{equation*}
Then we have $\kappa \geq \tilde{r} \geq p$. In case 1A, $\tilde{r} \geq p$.
\item In case 2A (i.e.\@ $s' \geq r$ and $A$ is not constant), we set $\tilde{r}= p$.
In case 2B (i.e.\@ $s' \geq r$ and $A$ is constant), we assign a value to $\tilde{r}$ so that \eqref{eq:existence_r} holds true.
In both cases 2A and 2B, we set
\begin{equation*}
\kappa= \bar{\kappa} \Big( \tilde{r}, \min \big(p, {\textstyle \frac{s'}{\tilde{r}} } \big) \Big) \quad
\text{and} \quad
\eta= \bar{\eta} \Big( \tilde{r}, \min \big(p, {\textstyle \frac{s'}{\tilde{r}} } \big) \Big).
\end{equation*}
In case 2A, we have $\kappa \geq \tilde{r} = p$ by inequality \eqref{eq:low_bound_kappa}. In case 2B, we have $\kappa \geq p$ by definition.
\end{itemize}

\begin{remark}
In case 2B, it is easy to deduce from the proof of Lemma \ref{lemma:case2B} an explicit $\tilde{r} \in (1,r]$ such that \eqref{eq:existence_r} holds. Note that the obtained $\tilde{r}$ may not be the best one (i.e.\@ the largest one). For example, if $s' \geq pr$, then one can take $\tilde{r}= r$. Then $\frac{s'}{\tilde{r}} \geq p$ and therefore $\kappa = \bar{\kappa}(\tilde{r},p) \geq p$,
by inequality \eqref{eq:low_bound_kappa}.
\end{remark}

\subsection{The relaxed problem}

We propose in this subsection an appropriate relaxation of problem \eqref{eq:hjb_problem}.
Let $\mathcal{K}$ denote the set of triplets $(u,P,\gamma) \in L^\kappa(Q) \times L^{s'}(0,T) \times L^{p}(Q)$ such that $D u + \phi^\intercal P \in L^r(Q;\R^d)$, $Du \in L^{\tilde{r}}(Q;\R^d)$,
and such that \eqref{eq:subsolution_HJ} holds in the sense of distributions.

The following statement explains that $u$ has a ``trace" in a weak sense.

\begin{lemma}
	Let $f \in L^1(Q)$ and let $u \in L^1(Q)$ satisfy $D u \in L^{\tilde r}(Q;\R^d)$ and
	\begin{equation} \label{eq:inequality}
	-\partial_t u - A_{ij}\partial_{ij}u \leq f, \quad u(T) \leq u_T
	\end{equation}
	in the sense of distributions, i.e.~for every non-negative function $\vartheta \in C_c^\infty(\bb{T}^d \times \intoc{0,T})$ we have
	\begin{equation*}
	\iint_Q \del{u\partial_t \vartheta  + \partial_j(A_{ij}\vartheta)\partial_i u - f \vartheta} \dif x \dif t \leq \int_{\bb{T}^d} \vartheta(x,T)u_T(x)\dif x.
	\end{equation*}
	Then, for any $C^1$ map $\vartheta \colon [0,T] \times \bb{T}^d \rightarrow \bb{R}$, the function
	\begin{equation*}
	t \mapsto \int_{\bb{T}} \vartheta(x,t) u(x,t) dx
	\end{equation*}
	has a BV representative on $[0,T]$.
	In particular, for any nonnegative $C^1$ map $\vartheta \colon [0,T] \times \bb{T}^d \rightarrow \bb{R}$, one has the integration by parts formula: for any $0 \leq t_1 \leq t_2 \leq T$,
	\begin{equation} \label{eq:ibp}
	- \Big[ \int_{\bb{T}^d} \vartheta u \Big]_{t_1}^{t_2}
	+ \int_{t_1}^{t_2} \int_{\bb{T}^d} u \partial_t \vartheta
	+ \langle D \vartheta, A D u \rangle
	+ \vartheta \partial_i A_{ij} \partial_j u 
	\leq \int_{t_1}^{t_2} \int_{\bb{T}^d} f \vartheta.
	\end{equation}
\end{lemma}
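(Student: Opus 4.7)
Set $\varphi_\vartheta(t) := \int_{\bb{T}^d} \vartheta(x,t) u(x,t)\,\dif x$ and
\[
G_\vartheta(t) := \int_{\bb{T}^d} \bigl(u\,\partial_t\vartheta + \partial_j(A_{ij}\vartheta)\,\partial_i u - f\vartheta\bigr)(x,t)\,\dif x.
\]
My strategy is to identify the distributional derivative of $\varphi_\vartheta$ on $(0,T)$ as $G_\vartheta$ plus a non-negative Radon measure—which immediately yields the BV property—and then to derive \eqref{eq:ibp} by approximating the indicator of $[t_1,t_2]$ by smooth time cutoffs inside the weak formulation.

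For the BV property, I first take $\vartheta$ non-negative. By density the weak inequality extends to test functions in $C^1(Q)$ vanishing in a neighborhood of $t=0$. For any non-negative $\chi \in C_c^\infty((0,T))$, the product $\psi(x,t):=\vartheta(x,t)\chi(t)$ is then admissible; expanding $\partial_t\psi$ and $\partial_j(A_{ij}\psi)$ reduces the weak inequality to
\[
\int_0^T \chi'(t)\,\varphi_\vartheta(t)\,\dif t + \int_0^T \chi(t)\,G_\vartheta(t)\,\dif t \leq 0.
\]
Since $u, f \in L^1(Q)$, $Du \in L^{\tilde r}(Q)$, and both $\partial_t\vartheta$ and $\partial_j(A_{ij}\vartheta)$ are bounded (the latter thanks to the Lipschitz regularity of $A$ furnished by hypothesis (H3)), Fubini yields $\varphi_\vartheta, G_\vartheta \in L^1(0,T)$. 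Integrating by parts recasts the above as $\langle \varphi_\vartheta' - G_\vartheta,\chi\rangle \geq 0$ for every non-negative $\chi \in C_c^\infty((0,T))$, so $\varphi_\vartheta' - G_\vartheta$ is a non-negative Radon measure $\nu_\vartheta$ on $(0,T)$. Hence $\varphi_\vartheta' = G_\vartheta + \nu_\vartheta$ is a signed Radon measure and $\varphi_\vartheta$ admits a BV representative on $[0,T]$ with one-sided limits everywhere. For a general $\vartheta \in C^1$, writing $\vartheta=(\vartheta+M)-M$ for $M > \|\vartheta\|_\infty$ reduces to the non-negative case by linearity.

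To establish \eqref{eq:ibp} for non-negative $\vartheta$ and $0 \leq t_1 \leq t_2 \leq T$, I plug into the same weak inequality a smooth approximation $\chi_\varepsilon$ of $\mathbf{1}_{[t_1,t_2]}$. In the interior case $0<t_1\leq t_2<T$, I choose $\chi_\varepsilon \in C_c^\infty((0,T))$ whose derivative is a non-negative bump of unit mass just after $t_1$ and a non-positive bump of unit mass just before $t_2$; passing to the limit and using the one-sided limits of the right-continuous BV representative yields $\varphi_\vartheta(t_1)-\varphi_\vartheta(t_2)+\int_{t_1}^{t_2}G_\vartheta\,\dif t \leq 0$, which is precisely \eqref{eq:ibp}. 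For $t_2=T$, I instead use $\chi_\varepsilon \in C^1([0,T])$ with $\chi_\varepsilon(0)=0$ and $\chi_\varepsilon(T)=1$; the extended weak formulation then contributes a boundary term $\chi_\varepsilon(T)\int \vartheta(x,T)u_T(x)\,\dif x$, leading to \eqref{eq:ibp} with $\varphi_\vartheta(T)$ interpreted as $\int \vartheta(x,T)u_T(x)\,\dif x$. The case $t_1 = 0$ is handled symmetrically with the right limit of the BV representative. The principal delicacy is this boundary identification at $T$: since the hypothesis only provides the terminal condition $u(T) \leq u_T$ weakly, the left limit of $\varphi_\vartheta$ at $T$ may strictly underestimate the terminal trace, and assigning the BV representative the value $\int \vartheta(T,\cdot)u_T$ at $t=T$ (rather than a naive left limit) is what guarantees that \eqref{eq:ibp} is valid up to $t_2 = T$.
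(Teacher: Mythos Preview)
Your proof is correct and reaches the same conclusion as the paper, but via a genuinely different route. The paper mollifies $u$ and $f$ in $(x,t)$, obtaining $-\partial_t u_\varepsilon - A_{ij}\partial_{ij}u_\varepsilon \leq f_\varepsilon + R_\varepsilon$ with a commutator remainder $R_\varepsilon \to 0$ in $L^1$; integrating this against $\vartheta$ on $(t_1,t_2)\times\bb{T}^d$ and passing $\varepsilon\to 0$ gives \eqref{eq:ibp} for a.e.\ $t_1,t_2$, from which the monotonicity of $G(t)=\int_{\bb{T}^d}\vartheta u + F(t)$ (with $F$ absolutely continuous) is read off. You instead insert the tensor product $\vartheta(x,t)\chi(t)$ directly into the weak formulation, obtain $\int_0^T\chi'\varphi_\vartheta + \int_0^T\chi\,G_\vartheta \leq 0$, and identify $\varphi_\vartheta' - G_\vartheta$ as a non-negative Radon measure; the integration-by-parts inequality is then recovered by letting $\chi$ approximate $\mathbf{1}_{[t_1,t_2]}$.

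The two arguments coincide at the structural level—both show that $\varphi_\vartheta$ is an absolutely continuous function plus a nondecreasing one—but yours is more elementary in that it bypasses the convolution/commutator machinery entirely (no need to control $[\xi_\varepsilon,A_{ij}\partial_j](u)$ via the Lipschitz bound on $A$). The paper's route, on the other hand, yields \eqref{eq:ibp} first and derives the BV property as a consequence, whereas you establish BV first and deduce \eqref{eq:ibp}. Your handling of the terminal time $t_2=T$, assigning the BV representative the value $\int_{\bb{T}^d}\vartheta(T,\cdot)u_T$ rather than $\varphi_\vartheta(T^-)$, is correct and matches the paper's remark that the inequality continues to hold ``even if we replace $\int_{\bb{T}^d}\vartheta u$ by any value between $I(t+)$ and $I(t-)$.'' One small point you leave implicit—shared with the paper—is that finiteness of the right limit $\varphi_\vartheta(0^+)$ (hence genuine BV up to $t=0$) is not forced by the hypotheses alone; in the application this is supplied by the additional integrability of $u$ in $\mathcal{K}$.
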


\begin{proof}
	First, observe that $x \mapsto u(x,t)$ is a well-defined $L^1(\bb{T}^d)$ function for a.e.~$t \in (0,T)$.
	Then by standard convolution smoothing arguments, one can check that \eqref{eq:ibp} holds for a.e.~$t_1,t_2 \in [0,T]$ with $t_1 \leq t_2$.
	Indeed, if $\xi_\varepsilon$ is a convolution kernel, then $u_\varepsilon = \xi_\varepsilon \ast u, f_\varepsilon = \xi_\varepsilon \ast f$ can be shown to satisfy
	\begin{equation} 
	-\partial_t u_\varepsilon - A_{ij}\partial_{ij}u_\varepsilon \leq f_\varepsilon + R_\varepsilon
	\end{equation}
	where $R_\varepsilon \to 0$ in $L^1$ as $\varepsilon \to 0$.
	Then integration by parts implies, for $0 < t_1 \leq t_2 < T$ and $\varepsilon$ small enough, that
	\begin{equation} 
	- \Big[ \int_{\bb{T}^d} \vartheta u_\varepsilon \Big]_{t_1}^{t_2}
	+ \int_{t_1}^{t_2} \int_{\bb{T}^d} u_\varepsilon \partial_t \vartheta
	+ \langle D \vartheta, A D u_\varepsilon \rangle
	+ \vartheta \partial_i A_{ij} \partial_j u_\varepsilon 
	\leq \int_{t_1}^{t_2} \int_{\bb{T}^d} (f_\varepsilon + R_\varepsilon) \vartheta.
	\end{equation}
	Since $u_\varepsilon(\cdot,t) \to u(\cdot,t)$ in $L^1(\bb{T}^d)$ for a.e.~$t$, and likewise $u_\varepsilon \to u, D u_\varepsilon \to D u,$ and $f_\varepsilon \to f$ in $L^1$, so  by letting $\varepsilon \to 0$ we deduce the \eqref{eq:ibp} for a.e.~$t_1,t_2 \in [0,T]$ with $t_1 \leq t_2$.
	
	Now define, for a.e.~$t \in [0,T]$, the functions
	\begin{equation*}
	G(t) = \int_{\bb{T}^d} \vartheta(x,t)u(x,t)\dif x + F(t),
	\ \
	F(t) = \int_{t}^{T} \int_{\bb{T}^d} u \partial_t \vartheta
	+ \langle D \vartheta, A D u \rangle
	+ \vartheta \partial_i A_{ij} \partial_j u - f \vartheta.
	\end{equation*}
	Now $F$ is absolutely continuous, being the integral of an $L^1(0,T)$ function.
	By what we have shown $G(t)$ is increasing on its domain, and moreover $G(T) \leq \int_{\bb{T}^d} \vartheta(x,T)u_T(x)\dif x$ by hypothesis.
	Thus $I := G-F$ is BV, and \eqref{eq:ibp} indeed continues to hold for all $0 \leq t_1 \leq t_2 \leq T$, even if we replace $\int_{\bb{T}^d} \vartheta(x,t)u(x,t)\dif x$ by any value between $I(t+)$ and $I(t-)$.
\end{proof}

We extend the functional $D$ to triplets $(u,P,\gamma) \in \mathcal{K}$:
\begin{equation*}
D(u,P,\gamma) =
-\int_{\bb{T}^d} u(x,0^+) m_0(x) \dif x
+ \int_0^T \Phi^*\del{P(t)} \dif t
+ \iint_Q F^*\del{x,\gamma(x,t)} \dif x \dif t.
\end{equation*}
We consider the following relaxation of problem \eqref{eq:hjb_problem}:
\begin{equation} \label{eq:relaxed_pb_hj}
\inf_{(u,P,\gamma) \in \mathcal{K}} D(u,P,\gamma).
\end{equation}

\begin{proposition} \label{prop:justif_relaxation}
We have
\begin{equation*}
\inf_{(u,P,\gamma) \in \mathcal{K}_0} D(u,P,\gamma)
= \inf_{(u,P,\gamma) \in \mathcal{K}} D(u,P,\gamma).
\end{equation*}
\end{proposition}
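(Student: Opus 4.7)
The inclusion $\mathcal{K}_0 \subset \mathcal{K}$ is immediate from the definitions, giving the easy direction $\inf_{\mathcal{K}} D \leq \inf_{\mathcal{K}_0} D$. My plan for the reverse inequality is to establish the weak duality bound
\[
D(u,P,\gamma) + B(m,w) \geq 0 \qquad \forall (u,P,\gamma) \in \mathcal{K},\ \forall (m,w) \in \mathcal{K}_1,
\]
and then combine with Lemma \ref{lemma:duality} to conclude $\inf_{\mathcal{K}} D \geq -\min_{\mathcal{K}_1} B = \inf_{\mathcal{K}_0} D$.

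To prove weak duality, fix $(u,P,\gamma) \in \mathcal{K}$ with $D(u,P,\gamma) < \infty$ and take $(m,w) \in \mathcal{K}_1$ to be the unique minimizer provided by Lemma \ref{lemma:duality}, so in particular $m H^*(\cdot,-w/m) \in L^1(Q)$. Setting $g := \gamma - H(\cdot, Du + \phi^\intercal P) \in L^1(Q)$, the subsolution inequality defining $\mathcal{K}$ reads $-\partial_t u - A_{ij}\partial_{ij} u \leq g$ with $u(\cdot,T) \leq u_T$, so the preceding lemma applies. Testing its integration-by-parts inequality against a smooth spatial mollification $m^\varepsilon$ of $m$ (with $u(\cdot,T^-) \leq u_T$ used at the terminal boundary), then using the continuity equation satisfied by $(m,w)$ to identify the quantity $\iint_Q u\, \partial_t m + \iint_Q \partial_i(A_{ij}m)\,\partial_j u$ with $\iint_Q Du \cdot w$ in the limit $\varepsilon \to 0$, yields
\[
\int_{\bb{T}^d} m_0\, u(\cdot,0^+) - \int_{\bb{T}^d} m(\cdot,T)\, u_T + \iint_Q Du \cdot w \leq \iint_Q \bigl(\gamma - H(\cdot, Du + \phi^\intercal P)\bigr)\, m.
\]
Rearranging and combining with the three pointwise Fenchel-Young inequalities
\[
H(\cdot, Du + \phi^\intercal P)\, m + m H^*(\cdot, -w/m) \geq -(Du + \phi^\intercal P) \cdot w,
\]
\[
\Phi^*(P) + \Phi\Bigl(\textstyle\int_{\bb{T}^d} \phi w\Bigr) \geq \Bigl\langle P, \textstyle\int_{\bb{T}^d} \phi w\Bigr\rangle, \quad F^*(\cdot,\gamma) + F(\cdot,m) \geq \gamma\, m,
\]
the terms reorganize exactly into $D(u,P,\gamma) + B(m,w) \geq 0$.

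The main technical obstacle is the rigorous passage to the limit $\varepsilon \to 0$. Since $A$ depends on $x$, the mollified density $m^\varepsilon$ does not exactly satisfy the continuity equation, producing a commutator term that has to be controlled by a DiPerna-Lions-type argument exploiting the Lipschitz regularity of $\Sigma$ in Hypothesis (H3). Moreover, the individual integrals $\iint_Q H(\cdot, Du+\phi^\intercal P)\, m$ and $\iint_Q Du \cdot w$ need not be absolutely convergent given only the integrabilities $u \in L^\kappa$, $Du \in L^{\tilde r}$, $m \in L^q$, and $w \in L^{r'q/(r'+q-1)}$ recorded in \eqref{eq:bound_mw}. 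One must therefore either truncate $m$ and $|Du|$ at height $N$ and send $N \to \infty$ only after the Fenchel-Young recombination produces $L^1$ quantities, or treat the jointly nonnegative combination $H(\cdot,Du+\phi^\intercal P)m + m H^*(\cdot,-w/m) + (Du+\phi^\intercal P)\cdot w$ as a single unit throughout the limit passage.
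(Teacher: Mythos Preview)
Your proposal is correct and follows essentially the same route as the paper. The only organizational difference is that the paper isolates the entire mollification/commutator argument (including the DiPerna--Lions estimate on $R_\varepsilon$ and the passage to the limit in $\int \langle \phi w_\varepsilon, P\rangle$) into a separate integration-by-parts lemma (Lemma~\ref{lem:intbyparts}), stated immediately after the proposition; once that lemma is in hand, the proof of the proposition is just your Fenchel--Young chain $D(u,P,\gamma) \geq -B(m,w)$ followed by Lemma~\ref{lemma:duality}.
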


The proof requires an integration by parts formula, established in the following lemma.

\begin{lemma}\label{lem:intbyparts}
Let $(u,P,\gamma)\in {\mathcal K}$ and $(m,w) \in {\mathcal K}_1$ satisfy \eqref{eq:bound_mw}. Assume that $mH^*(\cdot, -w/m)\in L^1(Q)$. Then
\begin{equation*}
\gamma m \in L^1((0,T) \times \bb{T}^d), \quad
\langle P(\cdot), { \textstyle \int_{\bb{T}^d} } \phi(x) w(x,\cdot) \dif x \rangle \in L^1(0,T)
\end{equation*}
and for almost all $t \in (0,T)$ we have
\begin{equation}
\label{ineq:ineqfirst-plan}
\int_{\bb{T}^d} (u(T) m_T -  u(t)m(t)) \dif x  + \int_t^{T}\int_{\bb{T}^d} \del{m \gamma  + mH^* \big(x, {\textstyle -\frac{w}{m}} \big) + \langle P(t), \phi(x) w(x,t) \rangle }\dif x \dif t \; \geq \; 0,
\end{equation}
and
\begin{equation}
\label{ineq:ineqfirst-plan_i}
		\int_{\bb{T}^d} (u(t)m(t) - u(0)m_0) \dif x + \int_0^{t}\int_{\bb{T}^d}\del{m \gamma  + mH^*\del{x,{\textstyle -\frac{w}{m}} } + \langle P(t), \phi(x,t)w(x,t) \rangle }\dif x \dif t \; \geq \; 0.
\end{equation}
Moreover, if equality holds in the inequality \eqref{ineq:ineqfirst-plan} for $t=0$, then $w= -mD_\xi H(\cdot,D u + \phi^\intercal P)$ a.e.
\begin{comment}
and, in the first order case, $-\partial_t u^{ac}(t,x) + H(x,D u(t,x)) = \alpha(t,x)$ for $m$-a.e.~$(t,x) \in (0,T) \times \bb{T}^d$, where $\partial_t u^{ac}$ is the absolutely continuous part of the measure $\partial_t u$.
\end{comment}
\end{lemma}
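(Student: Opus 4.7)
The plan is to pair the HJ sub-solution inequality against the continuity equation (after regularization) and close the argument with a pointwise Fenchel-Young inequality. The two integrability claims are immediate from H\"older: $\gamma m \in L^1(Q)$ since $\gamma \in L^p(Q)$ and $m \in L^q(Q) = L^{p'}(Q)$, while $\langle P(\cdot),\int_{\bb{T}^d}\phi w\rangle \in L^1(0,T)$ because $P \in L^{s'}((0,T))$ and $\int_{\bb{T}^d}\phi w \in L^s((0,T))$ by \eqref{eq:bound_mw}.

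For \eqref{ineq:ineqfirst-plan}, I would mollify $m$ and $w$ in space and time by a standard kernel to obtain $m_\varepsilon \geq 0$ of class $C^1([0,T]\times\bb{T}^d)$ and $w_\varepsilon$ satisfying a regularized continuity equation $\partial_t m_\varepsilon - \partial_{ij}(A_{ij}m_\varepsilon) + \nabla \cdot w_\varepsilon = R_\varepsilon$, with commutator $R_\varepsilon \to 0$ in $L^1(Q)$ by DiPerna-Lions theory (using the Lipschitz regularity of $\Sigma$ in (H3)). Inserting $\vartheta = m_\varepsilon$ into the integration-by-parts formula \eqref{eq:ibp} applied to the HJ sub-solution (whose right-hand side $f = \gamma - H(\cdot, Du + \phi^\intercal P)$ lies in $L^1$ by (H2), since $Du + \phi^\intercal P \in L^r$), then substituting the regularized FP equation to rewrite $u\partial_t m_\varepsilon$ and integrating the $A_{ij}$ terms once more by parts, the second-order contributions cancel up to the $R_\varepsilon$ error. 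Passing to the limit $\varepsilon \to 0$ with the exponents fixed in \S\ref{subsection:estimates_hj} (so that $u \in L^\kappa$ with $\kappa \geq p$ pairs with $m \in L^q$; $Du \in L^{\tilde r}$ pairs with $w \in L^{r'q/(r'+q-1)}$; and the $H$-growth together with $P \in L^{s'}$ controls the nonlinearity) yields, for a.e. $t \in (0,T)$,
\begin{equation*}
\int_{\bb{T}^d}(u(T)m_T - u(t)m(t))\dif x + \int_t^T\int_{\bb{T}^d}\big(m\gamma - mH(\cdot, Du + \phi^\intercal P) - Du\cdot w\big)\dif x\dif t \geq 0.
\end{equation*}
The analogous inequality producing \eqref{ineq:ineqfirst-plan_i} follows from the same argument on $[0,t]$ using $m(0) = m_0$.

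The Fenchel-Young inequality gives, pointwise on $\{m > 0\}$,
\begin{equation*}
mH(x, Du + \phi^\intercal P) + mH^*(x, -w/m) \geq -\langle Du + \phi^\intercal P, w\rangle,
\end{equation*}
with equality iff $-w/m = D_\xi H(x, Du + \phi^\intercal P)$; on $\{m = 0\}$ the hypothesis $mH^*(\cdot, -w/m) \in L^1$ forces $w = 0$ a.e., so the inequality is trivially tight. Substituting and cancelling the $Du\cdot w$ terms produces \eqref{ineq:ineqfirst-plan}. If equality holds at $t = 0$, then both the nonnegative HJ slack and the nonnegative Fenchel-Young slack must vanish a.e., so in particular $w = -mD_\xi H(\cdot, Du + \phi^\intercal P)$ a.e. The main obstacle is the rigorous cancellation of the second-order cross terms $\int\langle Dm_\varepsilon, ADu\rangle + \int m_\varepsilon \partial_i A_{ij}\partial_j u$ against the IBP expansion of $\int u\partial_{ij}(A_{ij}m_\varepsilon)$: neither side is uniformly bounded as $\varepsilon \to 0$, so the cancellation must be identified exactly before taking the limit, with the residual captured purely by $R_\varepsilon$.
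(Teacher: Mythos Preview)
Your outline is close to the paper's, but the order in which you apply Fenchel--Young and pass to the limit is reversed, and this creates a genuine gap. You write down the intermediate inequality
\[
\int_{\bb{T}^d}(u(T)m_T - u(t)m(t))\,\dif x + \int_t^T\!\!\int_{\bb{T}^d}\big(m\gamma - mH(\cdot, Du + \phi^\intercal P) - Du\cdot w\big)\,\dif x\,\dif t \geq 0
\]
and then invoke Fenchel--Young pointwise. But neither $mH(\cdot,Du+\phi^\intercal P)$ nor $Du\cdot w$ is known to lie in $L^1(Q)$, so this inequality is not justified. Concretely: $H(\cdot,Du+\phi^\intercal P)$ is only in $L^1$ (from $Du+\phi^\intercal P\in L^r$ and growth (H2)), which does not pair with $m\in L^q$; and the conjugate exponent of $\sigma=r'q/(r'+q-1)$ is $\sigma'=rp$, whereas you only have $Du\in L^{\tilde r}$ with $\tilde r\leq r<rp$. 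Your claim that ``$Du \in L^{\tilde r}$ pairs with $w \in L^{r'q/(r'+q-1)}$'' is therefore false.

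The paper avoids this by applying Fenchel--Young \emph{before} the limit, at the regularized level: from
\[
-m_\epsilon H^*\!\big(x,-w_\epsilon/m_\epsilon\big) \leq \langle w_\epsilon, Du+\phi^\intercal P\rangle + m_\epsilon H(x,Du+\phi^\intercal P),
\]
the two problematic terms are combined into a single one whose limit $-mH^*(x,-w/m)$ is in $L^1$ \emph{by hypothesis}. One then passes to the limit in the resulting inequality, where every term is controlled: $m_\epsilon\gamma$ via $L^q$--$L^p$ duality, $m_\epsilon H^*(-w_\epsilon/m_\epsilon)\to mH^*(-w/m)$ by a standard convexity/l.s.c.\ argument, $\langle\phi w_\epsilon,P\rangle$ via \eqref{eq:bound_mw}, and the commutator $\partial_j u\,R_\epsilon$ via $R_\epsilon\to 0$ in $L^q$ paired with $Du\in L^{\tilde r}$, $\tilde r\geq p$ (or $A$ constant, in which case $R_\epsilon\equiv 0$). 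Note this last pairing is also why your ``$R_\varepsilon\to 0$ in $L^1$'' is insufficient: you need $L^q$-convergence of the commutator.

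This reordering also affects the equality case. Because Fenchel--Young is used at the $\epsilon$-level, equality in the limit does not immediately yield pointwise equality of the duality gap; the paper handles this with a short measure-theoretic argument (the sets $E_\sigma$) to rule out any region of positive measure where the gap is bounded away from zero. Your simpler ``both slacks must vanish'' argument would be valid only if the limiting inequality with the separate $mH$ and $Du\cdot w$ terms held, which it does not.
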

	
\begin{proof}
In the interest of smoothing \( (m,w) \) by convolution, extend the pair to \( [-1,T+1] \times \mathbb{T}^d \) by defining \( m = m_0 \) on \( [-1,0 ] \), \( m = m(T) \) on \( [T,T+1] \), and \( w(s,x) = 0 \) for \( (s,x) \in (-1,0) \cup (T,T+1) \times \mathbb{T}^d \). Let \( \tilde{A}_{ij} \) be an extension of \( A_{ij} \) with \( \tilde{A}_{ij} = A_{ij} \) if \( t \in (0,T) \) and zero otherwise. Note that with these described extensions, \( (m,w) \) solves 
    \begin{equation*}
    \partial_tm - \partial_{ij}(\tilde{A}_{ij}(t,x)m) + \nabla \cdot w = 0 \ \ \ \text{ on } \left( -1,T+1 \right) \times \mathbb{T}^d. 
    \end{equation*}
    Let \( \xi^\epsilon = \xi^\epsilon(t,x) = \xi^\epsilon_1(t)\xi^\epsilon_2(x) \) be a smooth convolution kernel with support in \( B_\epsilon \). We smoothen the pair \( (m,w) \) with this kernel in a standard way into \( (m_\epsilon, w_\epsilon) = (\xi^\epsilon \ast m,\xi^\epsilon \ast w) \). Then \( (m_\epsilon, w_\epsilon) \) solves
    \begin{equation}
        \label{eq:byparts_i}
         \partial_t m_\epsilon - \partial_{ij}(\tilde{A}_{ij} m_\epsilon) + \nabla \cdot w_\epsilon = \partial_j R_\epsilon \ \ \ \text{ in } \left(-\frac{1}{2}, T + \frac{1}{2} \right)
    \end{equation}
 in the sense of distributions, where
 \begin{equation}
     \label{eq:byparts_ii}
     R_\epsilon := [\xi^\epsilon, \partial_j \tilde{A}_{ij}](m) + [\xi^\epsilon, \tilde{A}_{ij}\partial_j](m).
 \end{equation}
 Here we use again the commutator notation \cite{diperna89}
 \begin{equation}
     \label{eq:commutator}
     [\xi^\epsilon, c](f) := \xi^\epsilon \star (cf) - c(\xi^\epsilon \star f).
      \end{equation}
    By \cite[Lemma II.1]{diperna89}, we have that \( R_\epsilon \to 0 \) in \( L^q \), since \( m \in L^q \) and \( \tilde{A}_{ij} \in W^{1,\infty}. \) Fix time \( t \in (0,T) \) at which \( u(t^+) = u(t^-) = u(t) \) in \( L^\kappa(\mathbb{T}^d) \) and \( m_\epsilon(t) \) converges to \( m(t) \). We have the following inequality based on the equality in \eqref{eq:HJ subsolution},    
    \begin{equation}
        \label{ineq:HJ subsolution}
        -\partial_t u - A_{ij} \partial_{ij}u + H(x, D u + \phi^\intercal P) \leq \gamma. 
    \end{equation}
    Integrating this inequality against \( m_\epsilon \) yields
    \begin{equation}
    \begin{split}
        \int_t^T \sint{ u \partial_t m_\epsilon + \partial_i u \partial_j( \tilde{A}_{ij} m_\epsilon ) + m_\epsilon H(x, D u + \phi^\intercal P)} + \sint{ m_\epsilon(t)u(t) &- m_\epsilon(T) u_T} \\
        &\leq \int_t^T \sint{ \gamma m_\epsilon}. 
        \end{split}
    \end{equation}
By \eqref{eq:byparts_i}, we have 
\begin{equation*}
    \int_t^T \sint u \partial_t m_\epsilon + \partial_i u \partial_j (\tilde{A}_{ij} m_\epsilon) = \int_t^T \sint{-\partial_i u R_\epsilon + \langle D u, w_\epsilon \rangle},
\end{equation*}
while the convexity of \( H \) in the last variable gives
\begin{equation}
\label{ineq:convexityH}
    \int_t^T \sint{ -m_\epsilon H^*\left( x, {\textstyle -\frac{w_\epsilon}{m_\epsilon} } \right) \leq \int_t^T \sint{ \langle w_\epsilon, D u  + \phi^\intercal P \rangle + m_\epsilon H(x, D u  + \phi^\intercal P).}}
\end{equation}
Combining these results yields 
\begin{equation*}
    \sint{ m_\epsilon(t)u(t)} \leq \sint{m_\epsilon(T) u_T} + \int_t^T \sint{ m_\epsilon \left( \gamma + H^* \left( x, {\textstyle -\frac{w_\epsilon}{m_\epsilon} } \right) \right)} +\langle \phi w_\epsilon ,  P \rangle  + \partial_j u R_\epsilon.
\end{equation*}
Following now \cite{cardaliaguet2015second}, we have that as \( D u \in L^{\tilde{r}} \) (where we recall that \( \tilde{r} \geq p \) or \( A \) is a constant matrix), and as \( m \) is continuous in time with respect to the topology on \( P(\mathbb{T}^d)\), we have, as \( \epsilon \to 0 \),
\begin{align*}
    \int_t^T \sint{ \partial_j u R_\epsilon} \rightarrow \ & 0, \\
    \int_t^T \sint{ -m_\epsilon H^* \left( x, {\textstyle -\frac{w_\epsilon}{m_\epsilon} } \right) }  \rightarrow \ &  \int_t^T \sint{ -m H^* \left( x, {\textstyle -\frac{w}{m} } \right) }, \\
    \sint{m_\epsilon(T)u_T} \rightarrow \ & \sint{m(T)u_T}.
\end{align*}
(For the second limit, cf.~\cite{cardaliaguet2014mean}.)
We also claim that
\begin{equation*}
	\int_t^T \int_{\bb{T}^d} \ip{\phi w_\epsilon}{P} \to \int_t^T \int_{\bb{T}^d} \ip{\phi w}{P}.
\end{equation*}
To see this, recall Equation \eqref{eq:bound_mw} from Lemma \ref{lemma:duality}.
If $\frac{1}{s} + \frac{1}{pr} \geq 1$, we deduce that $w \in L^s$ and therefore $w_\epsilon \to w$ in $L^s$; from this the claim follows immediately.
Otherwise, if $\frac{1}{s} + \frac{1}{pr} < 1$, then by Hypothesis (H1) we assume that $\phi$ is constant.
Therefore, we have
\begin{equation*}
	\int_t^T \int_{\bb{T}^d} \ip{\phi w_\epsilon}{P}
	= \int_t^T  \ip{\int_{\bb{T}^d} \phi w_\epsilon}{P}
	= \int_t^T  \ip{\xi^\epsilon_1 \ast \int_{\bb{T}^d} \phi w}{P}
	\to \int_t^T  \ip{\int_{\bb{T}^d} \phi w}{P}
\end{equation*}
because $t \mapsto \int_{\bb{T}^d} \phi w$ is in $L^s$.
Now since \( u \in L^\kappa(Q) \), \( m \in L^q(Q) \), and \( \kappa \geq p \), \( m_\epsilon u \) strongly converges to \( mu \) in \( L^1(Q) \) and thus up to a subsequence, \( \sint{m_\epsilon(t) u(t)} \to \sint{ m(t)u(t)} \) a.e. We now have that 
\begin{equation*}
    \sint{m(t) u(t)} \dif x \leq \sint{m(T) u_T} \dif x + \int_t^T \sint{ m\left( \gamma + H^* \left(x, -{\textstyle \frac{w}{m} } \right)  \right) + \langle P, \phi w \rangle \dif x \dif t}.
\end{equation*}
An analogous argument produces the other desired inequality, so now assume that equality holds in inequality \eqref{ineq:ineqfirst-plan_i} with \( t = 0 \). Then there is \( t^* \in (0,T) \) where equality holds with \( t = t^* \). Let
\[ E_\sigma(t) := \left\{ (s,y), s \in [t,T], m\left( H^*\left( y, {\textstyle -\frac{w}{m} } \right) + H(x, D u + \phi^\intercal P) \right) \geq  -\langle w, D u + \phi^\intercal P \rangle + \sigma \right\}. \]
If \( | E_\sigma (t)| > 0 \), then for \( \epsilon > 0 \) small enough, the set of \( s,y \) satisfying
\[ m_\epsilon \left(H^*\left( y , {\textstyle -\frac{w_\epsilon}{m_\epsilon} } \right)+ H(x, D u + \phi^\intercal P) \right) \geq -\langle w_\epsilon, D u + \phi^\intercal P \rangle + \frac{\sigma}{2} \]
has measure larger than \( \frac{|E_\sigma(t)|}{2} \). Then by \eqref{ineq:convexityH}, for the fixed choice of \( \epsilon \),
\[ \int_{t^*}^T \sint{ - m_\epsilon H^* \left(x, {\textstyle -\frac{w_\epsilon}{m_\epsilon} } \right)} \leq \int_{t^*}^T \sint{ \langle w_\epsilon, D u + \phi^\intercal P \rangle + m_\epsilon H(x, D u + \phi^\intercal P)} - |E_\sigma(t)| \sigma/ 4,  \]
whereby we obtain strict inequality in \eqref{ineq:ineqfirst-plan_i} with \( t = t^* \), a contradiction. Thus \( |E_\sigma(t)| = 0 \) for any \( \sigma \) and a.e. \( t\),
\[ \langle -w, D u + \phi^\intercal P \rangle = m \left( H(x, D u + \phi^\intercal P) + H^*(y,{\textstyle -\frac{w}{m} }) \right), \]
and hence 
\[ w = -mD_\xi H( \cdot, D u + \phi^\intercal P) \ \ a.e. \ \ \text{ in } \ \ (0,T) \times \mathbb{T}^d. \]    
\end{proof}

\begin{proof}[Proof of Proposition \ref{prop:justif_relaxation}]
It is clear that the value of the relaxed problem is smaller than the value of problem \eqref{eq:hjb_problem}. It remains to show the other inequality.
For any $(m,w) \in \mathcal{K}_1$ with $mH^*(- w/m) \in L^1(Q)$, we have, by Fenchel-Young inequality and Lemma \ref{lem:intbyparts},
\begin{align*}
D(u,P,\gamma)
\geq \ & - \int_{\bb{T}^d} u(0) m_0
+ \int_0^T \Big( \big\langle P(t) , {\textstyle \int_{\bb{T}^d}} \phi w \big\rangle - \Phi( {\textstyle \int_{\bb{T}^d}} \phi w) \Big)
+ \iint_Q \big( \gamma m - F(m) \big) \\
\geq \ & - \int_{\bb{T}^d} u_T m(T)
- \iint_Q m H^* \Big( - \frac{w}{m} \Big) - \int_0^T \Phi( {\textstyle \int_{\bb{T}^d}} \phi w) - \iint_Q F(m) \\
= \ & - B(m,w).
\end{align*}
Maximizing the right-hand side with respect to $(m,w)$, we obtain with Lemma \ref{lemma:duality} that
\begin{equation*}
D(u,P,\gamma) \geq - \inf_{(m,w) \in \mathcal{K}_1} B(m,w) = \inf_{(u,P,\gamma) \in \mathcal{K}_0} D(u,P,\gamma),
\end{equation*}
which concludes the proof.
\end{proof}

\subsection{Existence of a relaxed solution}

We establish now the existence of a relaxed solution.

\begin{proposition} \label{prop:existence_solution}
The relaxed problem \eqref{eq:relaxed_pb_hj} has at least one solution $(u,P,\gamma) \in \mathcal{K}$.
\end{proposition}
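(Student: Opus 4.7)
We apply the direct method of the calculus of variations to the relaxed problem \eqref{eq:relaxed_pb_hj}. Let $(u_n, P_n, \gamma_n) \in \mathcal{K}$ be a minimizing sequence. Without loss of generality we may assume $\gamma_n \geq 0$: replacing $\gamma_n$ by $\gamma_n^+$ only strengthens the right-hand side of \eqref{eq:subsolution_HJ} while leaving $D$ unchanged (since $F^*(x,\cdot) \equiv 0$ on $(-\infty, 0]$). We may further assume $u_n$ is the viscosity solution of the HJ equation \eqref{eq:HJB_coro} with terminal condition $u_T$ and right-hand side $\gamma_n$, since replacing a subsolution by the solution can only increase $u_n$ pointwise (by comparison), thereby decreasing $-\int u_n(0)m_0$ and the cost $D$.

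\textbf{A priori bounds.} Lemma \ref{lem:intbyparts} applied with the optimal pair $(m^*, w^*)$ from Lemma \ref{lemma:duality} yields an upper bound on $\int u_n(0^+) m_0$ that is affine in $\|\gamma_n\|_p$ and $\|P_n\|_{s'}$. Combining with the coercivities $\Phi^*(z) \geq \tfrac{1}{C_2}|z|^{s'}-C_2$ and $F^*(x,a) \geq \tfrac{1}{C_1}a^p - C_1$ (for $a \geq 0$), and Young's inequality to absorb the cross terms, the bound $D(u_n, P_n, \gamma_n) \leq C$ forces uniform bounds on $\|P_n\|_{s'}$ and $\|\gamma_n\|_p$. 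When $s' \geq r$, Corollary \ref{cor:a priori bound2} directly yields $\|u_n\|_{L^\kappa(Q)} + \|u_n\|_{L^\infty(0,T; L^\eta(\bb{T}^d))} \leq C$. When $s'<r$, a bootstrap is required: integrating the HJ equation against $m_0$ (and using $m_0 \geq c_0 > 0$ together with the upper bound on $\int u_n(0^+)m_0$ from Lemma \ref{lem:intbyparts}) gives a uniform bound on $\iint H(x,Du_n + \phi^\intercal P_n)$, from which the lower bound $H(\xi) \geq \tfrac{1}{C_3}|\xi|^r - C_3$ and the triangle inequality provide bounds on $\|Du_n + \phi^\intercal P_n\|_{L^r}$ and $\|Du_n\|_{L^{\tilde r}}$ (using $\tilde r \leq r$); these can then be fed into Corollary \ref{cor:a priori bound1} to obtain the $L^\kappa$ bound.

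\textbf{Passage to the limit.} Along a subsequence, $u_n \rightharpoonup u$ in $L^\kappa$, $Du_n \rightharpoonup Du$ in $L^{\tilde r}$, $Du_n + \phi^\intercal P_n \rightharpoonup Du + \phi^\intercal P$ in $L^r$, $P_n \rightharpoonup P$ in $L^{s'}$, and $\gamma_n \rightharpoonup \gamma$ in $L^p$. Passing to the limit in \eqref{eq:weak_hj_def}, the linear terms converge trivially, while the nonlinear term $\iint \zeta H(x, Du + \phi^\intercal P)$ is weakly lower semicontinuous in $(Du,P)$ by convexity of $H$ and nonnegativity of the test function $\zeta$; hence $(u,P,\gamma) \in \mathcal{K}$.

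\textbf{Lower semi-continuity of $D$.} The convex integrals $\int \Phi^*(P_n)$ and $\iint F^*(\gamma_n)$ are weakly l.s.c.; the remaining task is the continuity $\int u_n(0^+) m_0 \to \int u(0^+) m_0$, which is the principal obstacle since traces are not in general preserved by weak $L^\kappa$-convergence. To overcome it we use the HJ equation to write
\begin{equation*}
\frac{\mathrm d}{\mathrm d t}\int_{\bb{T}^d} u_n(t) m_0\,\mathrm dx
= -\int_{\bb{T}^d} u_n \partial_{ij}(A_{ij} m_0)\,\mathrm dx
+ \int_{\bb{T}^d} H(x, Du_n + \phi^\intercal P_n) m_0\,\mathrm dx
- \int_{\bb{T}^d} \gamma_n m_0\,\mathrm dx,
\end{equation*}
whose right-hand side is uniformly bounded in $L^{1+\varepsilon}(0,T)$ for some $\varepsilon > 0$, thanks to the bounds on $\|\gamma_n\|_p$, $\iint H$, and $\|u_n\|_{L^\kappa}$. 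Arzelà--Ascoli then furnishes uniform convergence of $t \mapsto \int u_n(t) m_0$ along a further subsequence to a continuous function $G$; testing against $\psi \in C([0,T])$ and invoking the weak $L^\kappa$-convergence of $u_n$ identifies $G$ with the continuous representative of $t \mapsto \int u(t) m_0$. In particular $\int u_n(0^+) m_0 \to \int u(0^+) m_0$, so $D(u,P,\gamma) \leq \liminf_n D(u_n, P_n, \gamma_n) = \inf_\mathcal{K} D$ and $(u,P,\gamma)$ is a minimizer. The main difficulties are this trace convergence and, secondarily, the bootstrap needed in case $s' < r$.
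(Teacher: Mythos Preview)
Your overall strategy matches the paper's, and your use of Lemma \ref{lem:intbyparts} with the optimal pair $(m^*,w^*)$ to bound $\int u_n(0)m_0$ is a legitimate (and arguably cleaner) alternative to the paper's route of integrating the HJ equation against $m_0$. However, there is a genuine gap in your trace-convergence step, and two smaller issues elsewhere.

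\textbf{The main gap.} Your Arzel\`a--Ascoli argument rests on the claim that
\[
t \mapsto \int_{\bb{T}^d} H\big(x, Du_n(x,t)+\phi(x)^\intercal P_n(t)\big)\,m_0(x)\,\dif x
\]
is bounded in $L^{1+\varepsilon}(0,T)$ uniformly in $n$. But the a priori estimates only give $Du_n+\phi^\intercal P_n$ bounded in $L^r(Q)$, hence (via $H(\xi)\le C|\xi|^r+C$) the Hamiltonian term is bounded merely in $L^1(0,T)$. There is no mechanism here to upgrade this to $L^{1+\varepsilon}$, so equicontinuity of $t\mapsto\int u_n(t)m_0$ does not follow, and you cannot conclude $\int u_n(0)m_0\to\int u(0)m_0$. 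The paper does \emph{not} prove convergence of the trace; it only needs the one-sided inequality
\[
\limsup_{n\to\infty}\int_{\bb{T}^d} u_n(0)m_0 \;\le\; \int_{\bb{T}^d} \bar u(0)m_0,
\]
and for this it defers to \cite[Proposition 5.4, Step 3]{cardaliaguet2015second}. The argument there exploits the sign of the Hamiltonian term (it is bounded \emph{below}), so that $t\mapsto\int u_n(t)m_0$ is almost non-decreasing, and combines this monotonicity with the weak $L^\kappa$-convergence of $u_n$ to compare $\int u_n(0)m_0$ with averages $\tfrac1\tau\int_0^\tau\!\int u_n m_0$. Your two-sided equicontinuity approach cannot be salvaged without an additional estimate you do not have.

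\textbf{Two smaller points.} First, you take the minimizing sequence in $\mathcal{K}$ and then replace $u_n$ by a viscosity solution, invoking Corollaries \ref{cor:a priori bound1}--\ref{cor:a priori bound2}. Those corollaries assume $P_n\in\mathcal{C}^0([0,T];\R^k)$ and $\gamma_n\in\mathcal{C}^0(Q)$, which a generic element of $\mathcal{K}$ does not satisfy. The paper avoids this by taking the minimizing sequence in $\mathcal{K}_0$ (Proposition \ref{prop:justif_relaxation} guarantees the infima coincide). Second, in the case $s'\ge r$ you jump straight to Corollary \ref{cor:a priori bound2} for the $L^\kappa$ bound on $u_n$, but you never establish the uniform bounds on $\|Du_n\|_{\tilde r}$ and $\|Du_n+\phi^\intercal P_n\|_r$ that you invoke later for weak compactness. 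The paper obtains these in \emph{all} cases by integrating the HJ equation against $m_0$ (its Step~1); your ``bootstrap'' for $s'<r$ is exactly this, and it is needed when $s'\ge r$ as well.
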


\begin{proof}
Let $(u_n,P_n,\gamma_n)$ be a minimizing sequence for problem \eqref{eq:hjb_problem}. By Proposition \eqref{prop:justif_relaxation}, it is also a minimizing sequence for the relaxed problem \eqref{eq:relaxed_pb_hj}.
We can, without loss of generality, assume that $\gamma_n \geq 0$, so long as we only require $u_n$ to be a viscosity solution to the Hamilton-Jacobi equation.
Let us replace $\gamma_n$ with its positive part, i.e.~$(\gamma_n)_+ := \max\{\gamma_n,0\}$.
Then we replace $u_n$ with $\tilde u_n$, the continuous viscosity solution of
\begin{equation*}
	-\partial_t \tilde u_n - A_{ij}\partial_{ij}\tilde u_n + H(D \tilde u_n + \phi^\intercal P_n) = (\gamma_n)_+, \quad \tilde u_n(x,T) = u_T(x).
\end{equation*}
	By \cite{ishii95}, $\tilde u_n$ also satisfies this equation in the sense of distributions, and thus the new triple $(\tilde u_n,P_n,(\gamma_n)_+)$ is also a member of $\s{K}$. We have $\tilde{u}_n \geq u_n$ and $F^*(\gamma_n)= F^*((\gamma_n)_+)$ for all $(x,t) \in Q$.
Therefore, $D(\tilde u_n,P_n,(\gamma_n)_+) \leq D(u_n,P_n,\gamma_n)$, and thus the new sequence also minimizes $D$.
The arguments below will then apply to $(\tilde u_n,P_n,(\gamma_n)_+)$ in place of $(u_n,P_n,\gamma_n)$.

\textbf{Step 1: [Bounds for $(\gamma_n)$, $(P_n)$, and $(D u_n)$]:}
\\
All constants $C$ used in this part of the proof are independent of $n$.
We integrate \eqref{eq:subsolution_HJ} against $m_0$ on $Q$ and obtain
\begin{equation} \label{eq:integrated_hj}
\int_{\bb{T}^d} u_n(0)m_0
+ \iint_Q \partial_j u_n \partial_i (A_{ij} m_0)
+ \iint_Q H(D u_n + \phi^\intercal P_n) m_0
\leq \iint_Q \gamma_n m_0 + \int_{\bb{T}^d} u_T m_0.
\end{equation}
Let us recall that $m_0 \geq \frac{1}{C}$.
The Hamiltonian term can be bounded from below as:
\begin{equation} \label{eq:bound_below_h}
\iint_Q H(D u_n + \phi^\intercal P_n) m_0 \geq 
\frac{1}{C} \| D u_n + \phi^\intercal P_n \|_r^r - C.
\end{equation}
In light of the regularity assumptions on $A$ and $m_0$, we also have that
\begin{equation} \label{eq:second_term}
\Big|
\iint_Q \partial_j u_n \partial_i(A_{ij} m_0)
\Big|
\leq C \| D u_n \|_1
\leq C \| D u_n \|_{\tilde{r}}.
\end{equation}
Finally, the right-hand side of \eqref{eq:integrated_hj} is bounded by $C \| \gamma_n \|_{p} + C$. Combining this estimate with \eqref{eq:bound_below_h} and \eqref{eq:second_term}, we obtain that
\begin{equation} \label{eq:existence1}
\int_{\bb{T}^d} u_n(0)m_0 + \frac{1}{CB} \| D u_n + \phi^\intercal P_n \|_r^r - C \| D u_n \|_{\tilde{r}} \leq C \| \gamma_n \|_p + C
\end{equation}
for any choice of \( B \geq 1 \). The constants $C$ used are also independent of $B$.
Now we use the fact that $(u_n,P_n,\gamma_n)$ is a minimizing sequence and the growth assumptions on $F^*$ and $\Phi^*$ to derive
\begin{equation} \label{eq:existence2}
- \int_{\bb{T}^d} u_n(0) m_0
+ \frac{1}{C} \| P_n \|_{s'}^{s'}
+ \frac{1}{C} \| \gamma_n \|_{p}^{p} - C
\leq D(u_n,P_n,\gamma_n) \leq C.
\end{equation}
Summing up \eqref{eq:existence1} and \eqref{eq:existence2}, we obtain
\begin{equation} \label{eq:prop_ex_bound}
\frac{1}{C B} \| D u_n + \phi^\intercal P_n \|_r^r - C \| D u_n \|_{\tilde{r}} + \frac{1}{C} \| P_n \|_{s'}^{s'} + \frac{1}{C} \| \gamma_n \|_p^p \leq C \| \gamma_n \|_p + C.
\end{equation}
Now by H\"older's inequality we have
\begin{equation*}
\| Du \|_{\tilde{r}}^{\tilde{r}}
\leq C \big( \| Du + \phi^\intercal P \|_{\tilde{r}}^{\tilde{r}} + \| \phi^\intercal P \|_{\tilde{r}}^{\tilde{r}} \big)
\leq C \big( \| Du + \phi^\intercal P \|_r^r + \| P \|_{s'}^{s'} + 1 \big)
\end{equation*}
and so
\begin{equation} \label{eq:prop_ex_bound2}
\frac{1}{CB} \| D u_n \|_{\tilde{r}}^{\tilde{r}}+ \left[ \frac{1}{C} - \frac{C}{B} \right] \| P_n \|_{s'}^{s'} - C \| D u_n \|_{\tilde{r}} + \frac{1}{C} \| \gamma_n \|_p^p \leq C \| \gamma_n \|_p + C.
\end{equation}
We fix now \( B= 2C^2. \)
The terms \( \| D u_n \|_{{\tilde{r}}} \), \( \| \gamma_n \|_p \) can be absorbed.
For instance, the former can be absorbed by $ \| D u \|^{\tilde{r}}_{\tilde{r}}$ insofar as for an arbitrarily small $\varepsilon > 0$, there exists $C>0$ (depending on $\varepsilon$) such that
\begin{equation} \label{eq:second_term2}
\| D u_n \|_{\tilde{r}} \leq \varepsilon \| D u_n \|_{\tilde{r}}^{\tilde{r}} + C.
\end{equation}\label{ineq:step2_1}
Taking \( \varepsilon \) small enough, we finally deduce from \eqref{eq:prop_ex_bound2} the estimate
\begin{equation}
\| D u_n \|_{\tilde{r}}^{\tilde{r}} +  \| P_n \|_{s'}^{s'} +  \| \gamma_n \|_p^p \leq C,
\end{equation} 
so that $(\gamma_n)_{n \in \mathbb{N}}$ is bounded in $L^{p}(Q)$, $(P_n)_{n \in \mathbb{N}}$ is bounded in $L^{s'}((0,T);\R^k)$ and $(D u_n)_{n \in \mathbb{N}}$ is bounded in $L^{\tilde{r}}(Q)$.
Inequality \eqref{eq:prop_ex_bound} further shows that $Du_n + \phi^\intercal P_n$ is bounded in $L^r(Q;\R^d)$. This implies that
\begin{equation*}
\| H(D u_n + \phi^\intercal P_n) \|_1 \leq C.
\end{equation*}
\\
\textbf{Step 2 [Bound of $u_n$ in $L^{\kappa}(Q)$]:}
\\
Now that we have estimates on $P_n$ in $L^{s'}$, $\gamma_n$ in $L^p$, $D u_n$ in $L^{\tilde r}$, and $H(D u_n + \phi^\intercal P_n)$ in $L^1$, we can apply Corollary \ref{cor:a priori bound1} in case $s' < r$ or Corollary \ref{cor:a priori bound2} in case $s' \geq r$ and obtain $\enVert{u_n}_\kappa \leq C$, where $\kappa$ is defined at the end of Section \ref{subsection:estimates_hj}.
\\\\
\textbf{Step 3 [Conclusion]:}
\\
The rest of the proof is very similar to the proof of \cite[Proposition 5.4]{cardaliaguet2015second}, we only give the main lines.
By passing to a subsequence, we assume without loss of generality that
\begin{equation*}
\begin{array}{lll}
u_n \rightharpoonup \bar{u} \text{ in } L^{\kappa}(Q), \quad &
D u_n \rightharpoonup D \bar{u} \text{ in } L^{\tilde{r}}(Q), \quad &
Du_n + \phi^\intercal P_n \rightharpoonup D \bar{u} + \phi^\top \bar{P} \text{ in } L^{r}(Q;\R^d), \\
\gamma_n \rightharpoonup \bar{\gamma} \text{ in } L^{p}(Q), &
P_n \rightharpoonup \bar{P} \text{ in } L^{s'}(0,T). & 
\end{array}
\end{equation*}
%
Since $H$ is convex, $(\bar{u},\bar{P},\bar{\gamma}) \in \mathcal{K}$. By weak lower semicontinuity arguments, we have
\begin{equation*}
\liminf_{n \to \infty} \iint_Q F^*(\gamma_n) + \int_0^T \Phi^*(P_n) \geq
\iint_Q F^*(\bar{\gamma}) + \int_0^T \Phi^*(\bar{P}).
\end{equation*}
Using exactly the same arguments as in \cite[Proposition 5.4 (Step 3)]{cardaliaguet2015second}, one can prove that
\begin{equation*}
\limsup_{n \to \infty} \int_{\bb{T}^d} u_n(0)m_0 \leq \int_{\bb{T}^d} \bar{u}(0) m_0,
\end{equation*}
which proves the optimality of $(\bar{u},\bar{P},\bar{\gamma})$.
\end{proof}

\section{Existence and uniqueness of a solution for the MFG system} \label{sec:existence}

We prove in this section the existence and uniqueness of a \emph{weak solution} to the MFG system \eqref{eq:MFGC_bis}.

\begin{definition} \label{def:weak_sol}
We say that a quadruplet $(u,P,m,w) \in L^\kappa(Q) \times L^{s'}(0,T) \times L^q(Q) \times L^{\frac{r'q}{r'+q-1}}(Q)$ is a weak solution if
\begin{itemize}
\item[(i)] The following integrability conditions hold: $D u \in L^{\tilde{r}}(Q)$ and $mH^*( \cdot, -m/w)) \in L^1(Q)$.
\item[(ii)] Equation \eqref{eq:MFGC_bis}-(i) holds in the sense of distributions,
\begin{equation*}
-\partial_t u - A_{ij} \partial_{ij} u + H(D u + \phi^{\intercal} P) \leq f(m), \quad
u(T) \leq u_T
\end{equation*}
\item[(iii)] Equation \eqref{eq:MFGC_bis}-(ii) holds in the sense of distributions,
\begin{equation*}
\partial_t m - \partial_{ij} \big( A_{ij} m \big) - \nabla \cdot w= 0, \quad
m(0)= m_0,
\end{equation*}
\item [(iv)] Equations \eqref{eq:MFGC_bis}-(iii)-(iv) hold almost everywhere,
\item [(v)] The following equality holds:
\begin{equation} \label{eq:complementarity}
\iint_Q \Big( m f(m) + m H^*(-w/m) + \langle P, \phi w \rangle \Big)  + \int_{\bb{T}^d} m(T) u_T - \int_{\bb{T}^d} m_0 u(0)= 0.
\end{equation}
\end{itemize}
\end{definition}

\begin{theorem} \label{theorem:existence_uniqueness}
There exists a weak solution $(u,P,m,w)$ to the MFG system \eqref{eq:MFGC_bis}. It is unique in the following sense: if $(u,P,m,w)$ and $(u',P',m',w')$ are two solutions, then $m=m'$, $w=w'$, $P=P'$ a.e.\@ and $u=u'$ in $\{ m > 0\}$. 
\end{theorem}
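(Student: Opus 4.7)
The plan is to paste together the minimizer $(\bar u,\bar P,\bar\gamma)\in\mathcal{K}$ of the relaxed HJB problem produced by Proposition~\ref{prop:existence_solution} with the unique minimizer $(\bar m,\bar w)\in\mathcal{K}_1$ of $B$ from Lemma~\ref{lemma:duality}. By Proposition~\ref{prop:justif_relaxation} combined with Lemma~\ref{lemma:duality},
\[ D(\bar u,\bar P,\bar\gamma)+B(\bar m,\bar w)=0. \]
Expanding $D+B$ and inserting the integration-by-parts estimate of Lemma~\ref{lem:intbyparts} over $(0,T)$ (after using $\bar u(T)\leq u_T$ and $\bar m(T)\geq 0$ to bound $\int \bar u(T)\bar m(T)\leq \int u_T \bar m(T)$) rewrites the identity $0=D+B$ as a sum of three nonnegative quantities: the pointwise Fenchel--Young defect $F^*(\cdot,\bar\gamma)+F(\cdot,\bar m)-\bar\gamma\bar m$, its analogue $\Phi^*(\bar P)+\Phi(\int_{\mathbb{T}^d}\phi\bar w)-\langle\bar P,\int_{\mathbb{T}^d}\phi\bar w\rangle$, and the slack in the integration-by-parts inequality itself. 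Each must therefore vanish a.e., which yields $\bar\gamma=f(\bar m)$ on $\{\bar m>0\}$ (with $\bar\gamma\leq 0$ on $\{\bar m=0\}$), the relation $\bar P=\Psi(\int_{\mathbb{T}^d}\phi\bar w)$ a.e.\@ in $t$, and---via the equality clause of Lemma~\ref{lem:intbyparts}---the feedback formula $\bar w=-\bar m\,D_\xi H(\cdot,D\bar u+\phi^\intercal\bar P)$ a.e. Redefining $\bar\gamma$ as $f(\bar m)$ on $\{\bar m=0\}$ leaves the cost intact (since $F^*(\cdot,0)=0$) and preserves the HJ subsolution inequality (since it only raises the right-hand side to $0$), so $(\bar u,\bar P,\bar m,\bar w)$ satisfies (i)--(iv) of Definition~\ref{def:weak_sol}, while condition (v) is exactly the identity $D+B=0$ written out.

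\textbf{Uniqueness.} Uniqueness of $(m,w)$ is the content of Lemma~\ref{lemma:duality}, and then $P$ is determined a.e.\@ in $t$ through the identity $P=\Psi(\int_{\mathbb{T}^d}\phi(x)w(x,\cdot)\dif x)$. Let now $(u_i,P,m,w)$, $i=1,2$, be two weak solutions. Setting $\gamma:=f(m)$, each $(u_i,P,\gamma)\in\mathcal{K}$ is optimal for \eqref{eq:relaxed_pb_hj} with value $-B(m,w)$, by condition (v). Convexity of $H$ in its second argument and affinity of $D$ in $u$ imply that the entire family $u_\lambda:=\lambda u_1+(1-\lambda)u_2$ yields optimal triples $(u_\lambda,P,\gamma)\in\mathcal{K}$ for every $\lambda\in[0,1]$. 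Applying the existence argument above to each $u_\lambda$ forces $w=-m\,D_\xi H(\cdot,Du_\lambda+\phi^\intercal P)$ on $\{m>0\}$ for every $\lambda$, together with affinity of $H(x,\cdot)$ along the segment joining $Du_1+\phi^\intercal P$ and $Du_2+\phi^\intercal P$ on $\{m>0\}$. Combined with the common value $\int_{\mathbb{T}^d}u_1(0)m_0\dif x=\int_{\mathbb{T}^d}u_2(0)m_0\dif x$, this forces $u_1=u_2$ a.e.\@ on $\{m>0\}$, along the lines of \cite[Section~5]{cardaliaguet2015second}.

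\textbf{Main obstacle.} I expect the delicate point to be this final uniqueness claim, because $H$ is only assumed convex, not strictly convex, so a bare Fenchel equality does not pin down $Du$ on $\{m>0\}$ directly. The workaround is to use the optimality of the whole one-parameter family $(u_\lambda)_{\lambda\in[0,1]}$ simultaneously and to transport the equality of $\int_{\mathbb{T}^d}u_i(0)m_0\dif x$ along the flow of $-w/m$ on $\{m>0\}$, as in the scheme of \cite{cardaliaguet2015second,cardaliaguet2015weak}. A secondary (but more routine) subtlety is the careful use of the integration-by-parts inequality from Lemma~\ref{lem:intbyparts} up to the endpoints $t=0$ and $t=T$, needed to obtain both inequality $D+B\geq 0$ and its equality case, but this only requires invoking the BV trace of $t\mapsto\int_{\mathbb{T}^d}\bar u(\cdot,t)\bar m(\cdot,t)$ established in Section~\ref{sec:opt ctrl hj}.
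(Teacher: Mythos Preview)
Your existence argument is essentially the same as the paper's: the identity $D(\bar u,\bar P,\bar\gamma)+B(\bar m,\bar w)=0$ together with Fenchel--Young and the equality case of Lemma~\ref{lem:intbyparts} forces $\bar\gamma=f(\bar m)$, $\bar P=\Psi(\int\phi\bar w)$, and $\bar w=-\bar m\,D_\xi H(\cdot,D\bar u+\phi^\intercal\bar P)$. Your treatment of the set $\{\bar m=0\}$ is actually slightly more careful than the paper's compressed statement.

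The uniqueness argument, however, has a genuine gap at the point you yourself flag. Your convex-combination scheme does not deliver the pointwise conclusion $u_1=u_2$ on $\{m>0\}$. Optimality of every $u_\lambda=\lambda u_1+(1-\lambda)u_2$ only yields that $H(x,\cdot)$ is affine along the segment $[Du_1+\phi^\intercal P,\,Du_2+\phi^\intercal P]$ and that $D_\xi H$ takes the same value at both endpoints on $\{m>0\}$; with $H$ merely convex this does not force $Du_1=Du_2$, and even if it did, it would not give $u_1=u_2$ pointwise. The vague ``transport along the flow of $-w/m$'' is not available here: the operator is second-order degenerate, there is no characteristic flow to invoke, and no argument in \cite{cardaliaguet2015second} proceeds that way.

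The paper's device is different and is the missing idea: one takes the \emph{pointwise maximum} $\bar u:=u_1\vee u_2$. The nontrivial step (adapted from \cite[Theorem~6.2]{cardaliaguet2015second}) is that $(\bar u,\bar P,\bar\gamma)\in\mathcal{K}$, i.e.\ the max of two distributional HJ subsolutions with the same right-hand side is again a subsolution. One then introduces time-truncated functionals $D_t$, $B_t$ and shows, via Lemma~\ref{lem:intbyparts} and condition~(v), that $(u_1,\bar P,\bar\gamma)$, $(u_2,\bar P,\bar\gamma)$ and $(\bar u,\bar P,\bar\gamma)$ all minimize $D_t$ for a.e.\ $t$, whence $\int_{\bb{T}^d}u_1(t)m(t)=\int_{\bb{T}^d}u_2(t)m(t)=\int_{\bb{T}^d}\bar u(t)m(t)$. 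Since $u_i\leq\bar u$ pointwise and $m\geq 0$, this integral equality forces $u_1=u_2=\bar u$ a.e.\ on $\{m>0\}$. The max trick is precisely what converts a scalar integral identity into a pointwise one without any strict convexity of $H$; your convex combinations cannot do this, because $u_\lambda$ carries no pointwise ordering with respect to $u_1,u_2$.

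A smaller point: before you can invoke Lemma~\ref{lemma:duality} for uniqueness of $(m,w)$, you must first argue that any weak solution produces a minimizer $(m,w)$ of $B$ (the converse direction of what you did for existence). You gesture at this with ``by condition~(v)'', but the logic should precede the appeal to Lemma~\ref{lemma:duality}; the paper packages this as the second half of Theorem~\ref{theorem:equi_mfg_ocp}.
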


\begin{theorem} \label{theorem:equi_mfg_ocp}
Let $(\bar{m},\bar{w}) \in \mathcal{K}_1$ be a minimizer of \eqref{eq:fp_problem} and $(\bar{u},\bar{P},\bar{\gamma})$ be a minimizer of \eqref{eq:relaxed_pb_hj}. Then, $(\bar{u},\bar{P},\bar{m},\bar{w})$ is a weak solution of the MFG system and $\bar{\gamma}= f(\bar{m})$.

Conversly, any weak solution $(\bar{u},\bar{P},\bar{m},\bar{w})$ of the MFG system is such that $(\bar{m},\bar{w})$ is the solution to \eqref{eq:fp_problem} and $(\bar{u},\bar{P},f(\bar{m}))$ is a solution to \eqref{eq:relaxed_pb_hj}.
\end{theorem}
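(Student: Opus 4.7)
The plan is to exploit the duality identity $\inf D = - \min B$ from Lemma \ref{lemma:duality} together with the characterization of the equality case in the integration-by-parts inequality (Lemma \ref{lem:intbyparts}) and in Fenchel--Young inequality. The key observation is that the proof of Proposition \ref{prop:justif_relaxation} shows the chain
\begin{equation*}
D(u,P,\gamma) + B(m,w) \; = \; \mathcal{I}_1(u,P,m,w) + \mathcal{I}_2(P,w) + \mathcal{I}_3(\gamma,m),
\end{equation*}
where $\mathcal{I}_1$ is the nonnegative quantity appearing in \eqref{ineq:ineqfirst-plan_i} with $t = T$ (after invoking Lemma \ref{lem:intbyparts}), and
\begin{equation*}
\mathcal{I}_2 = \int_0^T \! \Big[ \Phi^*(P) + \Phi\big({\textstyle \int_{\bb{T}^d}} \phi w\big) - \big\langle P, {\textstyle \int_{\bb{T}^d}} \phi w \big\rangle \Big] dt, \qquad
\mathcal{I}_3 = \iint_Q \big[ F^*(\gamma) + F(m) - \gamma m \big],
\end{equation*}
each term being nonnegative by Fenchel--Young.

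For the direct implication, take $(\bar u,\bar P,\bar\gamma) \in \mathcal{K}$ minimizing \eqref{eq:relaxed_pb_hj} and $(\bar m,\bar w) \in \mathcal{K}_1$ minimizing \eqref{eq:fp_problem}. By Lemma \ref{lemma:duality} and Proposition \ref{prop:justif_relaxation} the left-hand side equals $0$, so each of $\mathcal{I}_1$, $\mathcal{I}_2$, $\mathcal{I}_3$ vanishes. Vanishing of $\mathcal{I}_3$ together with the strict convexity and differentiability of $F(x,\cdot)$ yields $\bar\gamma = f(\bar m)$ a.e., which also gives integrability $\bar m f(\bar m) = F^*(\bar\gamma) + F(\bar m) \in L^1(Q)$ and hence item (iv) of the MFG system, equation \eqref{eq:MFGC_bis}-(i). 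Vanishing of $\mathcal{I}_2$ with $\Psi = \nabla \Phi$ gives $\bar P(t) = \Psi(\int_{\bb{T}^d} \phi \bar w)$ a.e., which is \eqref{eq:MFGC_bis}-(iii). Vanishing of $\mathcal{I}_1$ is exactly the equality case $t = 0$ in \eqref{ineq:ineqfirst-plan_i}, so the final conclusion of Lemma \ref{lem:intbyparts} yields $\bar w = -\bar m D_\xi H(\cdot, D\bar u + \phi^\intercal \bar P)$ a.e., i.e.\@ \eqref{eq:MFGC_bis}-(iv). The complementarity identity \eqref{eq:complementarity} follows by substituting $\bar\gamma = f(\bar m)$ into the (now-equality) integration-by-parts formula from Lemma \ref{lem:intbyparts}. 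The integrability requirements in Definition \ref{def:weak_sol} are inherited from $(\bar u,\bar P,\bar\gamma) \in \mathcal{K}$, the bound \eqref{eq:bound_mw}, and the identity $\bar m H^*(\cdot, -\bar w/\bar m) \in L^1(Q)$ obtained during the proof of Lemma \ref{lemma:duality}.

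For the converse, let $(\bar u,\bar P,\bar m,\bar w)$ be a weak solution in the sense of Definition \ref{def:weak_sol}. First check admissibility: by (iii), $(\bar m,\bar w) \in \mathcal{K}_1$; by the growth of $f$, $f(\bar m) \in L^p(Q)$, and items (i)--(ii) of the definition, together with the $L^\kappa$ estimate for viscosity solutions from Corollaries \ref{cor:a priori bound1}--\ref{cor:a priori bound2}, put $(\bar u,\bar P,f(\bar m))$ into $\mathcal{K}$. The point now is to evaluate $D(\bar u,\bar P,f(\bar m)) + B(\bar m,\bar w)$. The Fenchel--Young term $\mathcal{I}_3$ vanishes identically because $\bar\gamma = f(\bar m)$; the term $\mathcal{I}_2$ vanishes by \eqref{eq:MFGC_bis}-(iii) and $\Psi = \nabla \Phi$; the remaining term $\mathcal{I}_1$ becomes precisely the complementarity relation \eqref{eq:complementarity} after using \eqref{eq:MFGC_bis}-(iv) to rewrite $\bar m H^*(\cdot,-\bar w/\bar m) + \langle \bar w, D \bar u + \phi^\intercal \bar P \rangle$ as $-\bar m H(\cdot, D \bar u + \phi^\intercal \bar P)$ and then applying the integration by parts on the equation satisfied by $\bar u$. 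Hence $D(\bar u,\bar P,f(\bar m)) + B(\bar m,\bar w) = 0$, and by the duality $\inf D = -\min B$ both $(\bar u,\bar P,f(\bar m))$ and $(\bar m,\bar w)$ must be minimizers.

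The main obstacle is the converse direction, and specifically the bookkeeping for $\mathcal{I}_1$: one has to justify the integration-by-parts formula on a weak solution with only the regularity recorded in Definition \ref{def:weak_sol}, and then check that the complementarity equation \eqref{eq:complementarity} is exactly the amount needed to close the accounting. This is where Lemma \ref{lem:intbyparts} is indispensable, and where the restrictions on exponents in Hypothesis (H5) are consumed in order to give meaning to each product $\gamma m$, $\langle P, \int \phi w\rangle$, and to the boundary terms $\int u(0) m_0$, $\int u(T) m(T)$.
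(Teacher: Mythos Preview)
Your approach is essentially the same as the paper's: write $D+B$ as a sum of three nonnegative Fenchel--Young/integration-by-parts defects, use duality to force all three to vanish for minimizers, and for the converse show directly that all three vanish for a weak solution so that $D+B=0$ forces optimality. The organization via $\mathcal I_1,\mathcal I_2,\mathcal I_3$ is clean and matches the paper's computations.

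Two small corrections. In the converse you invoke Corollaries \ref{cor:a priori bound1}--\ref{cor:a priori bound2} to place $\bar u$ in $L^\kappa$; this is unnecessary and in fact inapplicable, since those corollaries concern \emph{viscosity} solutions, whereas a weak solution only satisfies the HJ inequality in the distributional sense. The membership $\bar u\in L^\kappa$, $D\bar u\in L^{\tilde r}$, $\bar P\in L^{s'}$ is already built into Definition \ref{def:weak_sol}, and $f(\bar m)\in L^p$ follows from the growth of $f$ and $\bar m\in L^q$; together with the distributional HJ inequality this already gives $(\bar u,\bar P,f(\bar m))\in\mathcal K$. Second, to verify $\mathcal I_1=0$ in the converse you do not need equation \eqref{eq:MFGC_bis}-(iv) or any further integration by parts: once $\bar\gamma=f(\bar m)$, the expression $\mathcal I_1$ is, term by term, the left-hand side of \eqref{eq:complementarity}, which vanishes by hypothesis. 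The paper proceeds in exactly this way, substituting the Fenchel equalities $F(\bar m)+F^*(\bar\gamma)=\bar m\bar\gamma$ and $\Phi(\int\phi\bar w)+\Phi^*(\bar P)=\langle\bar P,\int\phi\bar w\rangle$ into $D+B$ and then invoking \eqref{eq:complementarity} directly.
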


\begin{proof}
\textbf{Part 1.}
Let $(\bar{m},\bar{w}) \in \mathcal{K}_1$ be the solution to \eqref{eq:fp_problem} and $(\bar{u},\bar{P},\bar{\gamma}) \in \mathcal{K}$ be a solution to \eqref{eq:relaxed_pb_hj}.
Conditions $(ii)$ and $(iii)$ of Definition \eqref{def:weak_sol} are already verified.
By Lemma \ref{lemma:duality} and Proposition \ref{prop:justif_relaxation}, these two problems have the same value, thus
\begin{align*}
0= \ & D(\bar{u},\bar{P},\bar{\gamma}) + B(\bar{m},\bar{w}) \\
= \ & \iint_Q \big( F^*(\bar{\gamma}) + F(\bar{m}) \big)
+ \int_0^T \Big( \Phi^*(\bar{P}) + \Phi \big( {\textstyle \int_{\bb{T}^d}} \phi \bar{w} \big) \Big) \\
& \qquad + \iint_Q \bar{m} H^*(-\bar{w}/\bar{m})
+ \int_{\bb{T}^d} u_T \bar{m}(T) - \int_{\bb{T}^d} \bar{u}(0) m_0.
\end{align*}
By the Fenchel-Young inequality, we have
\begin{align}
F^*(\bar{\gamma}) + F(\bar{m}) \geq \bar{\gamma} \bar{m} \quad & \text{for a.e.\@ $(x,t) \in Q$}, \label{eq:theo_equi_mfg_a} \\
\Phi^*(\bar{P}) + \Phi \big( {\textstyle \int_{\bb{T}^d}} \phi \bar{w} \big) \geq \big\langle \bar{P}, {\textstyle \int_{\bb{T}^d} } \phi \bar{w} \big\rangle \quad & \text{for a.e.\@ $t \in (0,T)$} \label{eq:theo_equi_mfg_b}
\end{align}
thus
\begin{equation} \label{eq:theo_equi_mfg}
0 \geq \iint_Q \Big( \bar{m} \bar{\gamma} + \bar{m} H^*(-\bar{w}/\bar{m}) + \langle P,\phi w \rangle \Big)+ \int_{\bb{T}^d} \bar{m}(T) u_T - \int_{\bb{T}^d} m_0 \bar{u}(0).
\end{equation}
This implies first that $\bar{m} H^*(-\bar{w}/\bar{m}) \in L^1(Q)$.
Moreover, by Lemma \ref{lem:intbyparts}, inequality \eqref{eq:theo_equi_mfg} is in fact an equality and $\bar{w}= -\bar{m} D_\xi H(D \bar{u} + \phi^{\intercal} \bar{P})$. Moreover, the equality holds a.e.\@ in \eqref{eq:theo_equi_mfg_a} and \eqref{eq:theo_equi_mfg_b} therefore, 
\begin{align*}
\bar{\gamma}= F'(\bar{m}) = f(\bar{m}) \quad & \text{for a.e.\@ $(x,t) \in Q$}, \\
\bar{P}= D \Phi \big( {\textstyle \int_{\bb{T}^d} } \phi \bar{w} \big) = \Psi \big( {\textstyle \int_{\bb{T}^d} } \phi\bar{w} \big) \quad & \text{for a.e.\@ $t \in (0,T)$}.
\end{align*}
Since \eqref{eq:theo_equi_mfg} is an equality and $\bar{\gamma} = f(\bar{m})$, \eqref{eq:complementarity} holds true. We conclude that $(\bar{u},\bar{P},\bar{m},\bar{w})$ is a weak solution to the MFG system.

\textbf{Part 2.}
Let $(\bar{u},\bar{P},\bar{m},\bar{w})$ be a weak solution to \eqref{eq:MFGC_bis}. Let $\bar{\gamma}= f(\bar{m})$. The growth condition on $f$ implies that $\bar{\gamma} \in L^{p}(Q)$. Therefore, $(\bar{m},\bar{w}) \in \mathcal{K}_1$ and $(\bar{u},\bar{P},\bar{\gamma}) \in \mathcal{K}$. It remains to show that $(\bar{u},\bar{P},\bar{\gamma})$ solves \eqref{eq:relaxed_pb_hj} and that $(\bar{m},\bar{w})$ solves \eqref{eq:relaxed_pb_hj}.

The argument is very similar to the one used in Proposition \ref{prop:justif_relaxation}. It mainly consists in showing that $D(\bar{u},\bar{P},\bar{\gamma}) + B(\bar{m},\bar{w})=0$.
Since $\bar{\gamma}= f(\bar{m})= F'(\bar{m})$ a.e., we have by convexity of $F$ that
\begin{equation*}
F(\bar{m}) + F^*(\bar{\gamma})= \bar{\gamma} \bar{m}, \quad \text{for a.e.\@ $(x,t) \in Q$.}
\end{equation*}
Similarly, since $\bar{P}= \Psi \big( {\textstyle \int_{\bb{T}^d} } \phi w \big)= D \Phi \big(  {\textstyle \int_{\bb{T}^d} } \phi w \big)$, we have
\begin{equation*}
\Phi \big( {\textstyle \int_{\bb{T}^d}} \phi w \big) + \Phi^*(\bar{P})
= \big\langle P, {\textstyle \int_{\bb{T}^d}} \phi w \big\rangle, \quad \text{for a.e.\@ $t \in (0,T)$.}
\end{equation*}
These two equalities and \eqref{eq:complementarity} yield:
\begin{align*}
D(\bar{u},\bar{P},\bar{\gamma}) + B(\bar{m},\bar{w}) = \ &
\iint_Q \big( F^*(\bar{\gamma}) + F(\bar{m}) \big)
+ \int_0^T \Big( \Phi \big( {\textstyle \int_{\bb{T}^d} } \phi \bar{w} \big) + \Phi^*(\bar{P}) \Big) \\
& \qquad \qquad + \iint_Q \bar{m} H^*(-\bar{w}/\bar{m}) + \int_{\bb{T}^d} \big( u_T \bar{m}(T) - \bar{u}(0) m_0 \big) \\
= \ & \iint_Q \bar{m} \bar{\gamma} + \iint_Q \langle \bar{P}, \phi \bar{w} \rangle + \int_{\bb{T}^d} u_T \bar{m}(T) - \bar{u}(0) m_0 + \iint_Q \bar{m} H^*(-\bar{w}/\bar{m}) \\
= \ & 0.
\end{align*}
As a consequence, we obtain
\begin{equation*}
\inf_{(u,P,\gamma) \in \mathcal{K}} D(u,P,\gamma) \leq D(\bar{u},\bar{P},\bar{\gamma})
= - B(\bar{m},\bar{w}) \leq - \min_{(m,w) \in \mathcal{K}_1} B(m,w).
\end{equation*}
The first and the last term being equal, the two above inequalities are equalities, which shows the optimality of optimality of $(\bar{u},\bar{P},\bar{\gamma})$ and $(\bar{m},\bar{w})$, respectively.
\end{proof}

\begin{proof}[Proof of Theorem \ref{theorem:existence_uniqueness}]
By Lemma \ref{lemma:duality}, problem \eqref{eq:fp_problem} has a solution $(\bar{m},\bar{w})$ and by Proposition \ref{prop:existence_solution}, problem \eqref{eq:relaxed_pb_hj} has a solution $(\bar{u},\bar{P},\bar{\gamma})$. By Theorem \ref{theorem:equi_mfg_ocp}, $(\bar{u},\bar{P},\bar{m},\bar{w})$ is a weak solution to the MFG system.

Now, let $(u_1,P_1,m_1,w_1)$ and $(u_2,P_2,m_2,w_2)$ be two weak solutions. By Theorem \ref{theorem:equi_mfg_ocp}, $(m_1,w_1)$ and $(m_2,w_2)$ are solutions to problem \eqref{eq:fp_problem}, they are therefore equal. Relation \eqref{eq:MFGC_bis}-(iii) implies that $P_1=P_2$. Let $(\bar{m},\bar{w},\bar{P})= (m_1,w_1,P_1)$ denote the common values. Let $\bar{\gamma}= f(\bar{m})$. Then $(u_1,\bar{P},\bar{\gamma})$ and $(u_2,\bar{P},\bar{\gamma})$ lie in $\mathcal{K}$ (by definition of weak solutions).

For $t \in (0,T)$, $(m,w) \in \mathcal{K}_1$, and $(u,P,\gamma) \in \mathcal{K}$, we introduce
\begin{align*}
B_t(m,w)= \ & \int_t^T \int_{\bb{T}^d} \Big( mH^*(-w/m) + F(m) \Big) + \int_t^T \Phi \big( {\textstyle \int_{\bb{T}^d} } \phi w \big) + \int_{\bb{T}^d} u_T m(T) \\
D_t(u,P,\gamma)= \ & - \int_{\bb{T}^d} u(0)m_0 + \int_t^T \Phi^*(P) + \int_t^T \int_{\bb{T}^d} F^*(\gamma).
\end{align*}
Proceeding as in the proof of Proposition \ref{prop:justif_relaxation}, we obtain that
\begin{equation*}
\inf_{(u,P,\gamma) \in \mathcal{K}} D_t(u,P,\gamma) \geq -B_t(\bar{m},\bar{w}).
\end{equation*}
By Lemma \ref{lem:intbyparts} and relation \eqref{eq:complementarity},
\begin{equation*}
\int_t^T \int_{\bb{T}^d} \Big( \bar{m} f( \bar{m}) + \bar{m} H^*(-\bar{w}/ \bar{m}) + \langle \bar{P}, \phi \bar{w} \rangle \Big)  + \int_{\bb{T}^d} \bar{m}(T) u_T - \int_{\bb{T}^d} \bar{m}(t) u_i(t)= 0
\end{equation*}
for a.e.\@ $t \in (0,T)$ and for $i=1,2$. Proceeding as in the proof of Theorem \ref{theorem:equi_mfg_ocp}, we obtain that $-B_t(\bar{m},\bar{w}) = D_t(u_i,\bar{P},\bar{\gamma})$. Thus $(u_1,\bar{P},\bar{\gamma})$ and $(u_2,\bar{P},\bar{\gamma})$ minimize $D_t$ over $\mathcal{K}$.

Let $\bar{u}= u_1 \vee u_2$. Adapting the proof in \cite[Theorem 6.2]{cardaliaguet2015second}, we obtain that $(\bar{u},\bar{P},\bar{\gamma}) \in \mathcal{K}$. Since $D_t(\bar{u}) \leq D_t(u_i)$, we deduce that $(\bar{u},P,\gamma)$ also minimize $D_t$. It follows that
\begin{equation*}
\int_{\bb{T}^d} u_1(t) \bar{m}(t)
= \int_{\bb{T}^d} u_2(t) \bar{m}(t)
= \int_{\bb{T}^d} \bar{u}(t) \bar{m}(t)
\end{equation*}
As $u_1 \leq \bar{u}$ and $u_2 \leq \bar{u}$, this implies that $u_1=u_2= \bar{u}$ a.e.\@ in $\{ \bar{m} > 0 \}$ and concludes the proof.
\end{proof}
	
	\section{Regularity estimates} \label{sec:regularity}
	
	In this section we adapt the methods used in \cite{graber2018sobolev,graber2019planning} to show that weak solutions of \eqref{eq:MFGC} possess extra regularity--Sobolev estimates in both space and time--not required by Definition \ref{def:weak_sol}.
	These estimates hold under general \emph{strong monotonicity} assumptions on the coupling $f(x,m)$ and \emph{coercivity} on the Hamiltonian.
	We divide our results into ``space regularity,'' i.e.~estimates on derivatives with respect to $x$, and ``time regularity,'' estimates on derivatives with respect to $t$.

	\subsection{Space regularity}
	Before stating the result, let us enumerate a few additional assumptions.
	
	\begin{assumption} \label{as:A const}
		$A_{ij}$ is constant.
	\end{assumption}
	\begin{assumption}[Strong monotonicity] \label{as:regularizing f}
		We have a Lipschitz estimate on $f$ of the form
		\begin{equation}
		\label{f Lipschitz in x}
		|f(x,m) - f(y,m)| \leq C(m^{q-1}+1)|x-y|\  \ \forall x,y \in \bb{T}^d, \ m \geq 0.
		\end{equation}
		We also assume that $f(x,m)$ is strongly monotone in $m$, that is, there exists $c_f > 0$ such that
		\begin{equation} \label{f strongly monotone}
		\del{f(x,\tilde m) - f(x,m)}(\tilde m - m) \geq c_f\min\{\tilde m^{q-2},m^{q-2}\}|\tilde m-m|^2 \ \forall \tilde m, m \geq 0, \ \tilde m \neq m.
		\end{equation}
		If $q < 2$ one should interpret $0^{q-2}$ as $+\infty$ in \eqref{f strongly monotone}.
		In this way, when $\tilde m = 0$, for instance, \eqref{f strongly monotone} reduces to $f(x,m)m \geq c_f m^q$, as in the more regular case $q \geq 2$.
	\end{assumption}
	\begin{assumption}[Coercivity] \label{as:coercivity}
		There exist $j_1,j_2:\bb{R}^d \to \bb{R}^d$ and $c_H > 0$ such that
		\begin{equation} \label{eq:Hcoercivity}
		H(x,\xi) + H^*(x,\zeta) - \xi \cdot \zeta \geq c_H|j_1(\xi) - j_2(\zeta)|^2.
		\end{equation}
		In particular, and in light of our restriction on the growth of $H$, we specify that $j_1(\xi) \sim |\xi|^{r/2-1}\xi$ and $j_2(\zeta) \sim |\zeta|^{r'/2-1}\zeta$.
	\end{assumption}
	\begin{assumption} \label{as:2nd x derivatives}
		$m_0 \in W^{2,\infty}(\bb{T}^d), u_T \in W^{2,\infty}(\bb{T}^d), \phi \in W^{2,\infty}\del{\bb{T}^d;\s{L}(\bb{R}^k,\bb{R}^d)}$, and $H^*$ is twice continuously differentiable in $x$ with	
		\begin{equation} \label{hyp:D_x^2 H}
		|D_{xx}^2 H^*(x,\zeta)| \leq C_H\del{|\zeta|^{r'} + 1}.
		\end{equation}
	\end{assumption}
	Notice that Assumption \ref{as:regularizing f} holds for the canonical case $f(x,m) = m^{q-1}$ or even if $f(x,m) = \tilde f(x)m^{q-1}$ for some strictly positive, Lipschitz continuous function $\tilde f$ on $\bb{T}^d$.
	Assumption \ref{as:coercivity} likewise holds for a canonical structure $H(x,\xi) = c(x)\abs{\xi}^r$ for some strictly positive, $\s{C}^2$ smooth function $c(x)$ on $\bb{T}^d$.
	
	\begin{proposition} \label{prop:space-regularity}
		Let Assumptions \ref{as:A const}, \ref{as:coercivity}, \ref{as:regularizing f}, \ref{as:2nd x derivatives} hold.
		Then, if $(u,m)$  is a weak solution of \eqref{eq:MFGC},
		$$\|m^{\frac{q}{2} - 1}D m\|_{L^2(Q} \leq C \hspace{0.3cm} \mbox{and } \; \; \|m^{1/2}D (j_1(D u))\|_{L^2(Q} \leq C,$$
		where $C$ is a constant depending only on the data.
	\end{proposition}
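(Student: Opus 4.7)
The plan is to adapt the space-shift technique of Graber and Me{\'s}z{\'a}ros \cite{graber2018sobolev} to the MFGC setting. Fix $h\in\bb{R}^d$ small and set $u^h(t,x):=u(t,x+h)$, and analogously $m^h,w^h$. Since $A_{ij}$ is constant (Assumption~\ref{as:A const}), the quadruplet $(u^h,P,m^h,w^h)$ is again a weak solution to the MFG system, only with $\phi$, $m_0$, $u_T$ and the $x$-slots of $f$ and $H$ replaced by their $h$-shifts; a change of variable in \eqref{eq:MFGC_bis}(iii) shows that the associated price is unchanged, so no shift of $P$ is needed. The target estimate is
\begin{equation*}
c_f \iint_Q \min\{m^{q-2},(m^h)^{q-2}\}|m-m^h|^2 \,\dif x\,\dif t + c_H \iint_Q m^h\bigl| j_1(Du+\phi^\intercal P) - j_1(Du^h+\phi^{h,\intercal} P) \bigr|^2 \dif x\,\dif t \leq C|h|^2,
\end{equation*}
together with its $m$-weighted analogue. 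Dividing by $|h|^2$, letting $h\to 0$, and using the Riesz characterization of Sobolev regularity then yields $m^{q/2-1}Dm$ and $m^{1/2}D\bigl(j_1(Du+\phi^\intercal P)\bigr)$ in $L^2(Q)$, from which the two stated bounds follow after separating the contribution of $D(\phi^\intercal P)=(D\phi)^\intercal P$, which is controlled by $\phi\in W^{2,\infty}$ and the $L^{s'}$ bound on $P$.

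To obtain the target inequality, I plan to perform a cross Fenchel--Young pairing of the two solutions: apply Lemma~\ref{lem:intbyparts} to $(u,P,f(\cdot,m))\in\mathcal{K}$ tested against $(m^h,w^h)\in\mathcal{K}_1$, and symmetrically to the $h$-shifted triple tested against $(m,w)$, then subtract the two complementarity relations \eqref{eq:complementarity} satisfied by $(u,P,m,w)$ and $(u^h,P,m^h,w^h)$. This produces a schematic identity
\begin{equation*}
\iint_Q \bigl( f(x,m)-f(x+h,m^h) \bigr)(m-m^h)\,\dif x\,\dif t \;+\; \mathcal{E}_H \;=\; \mathcal{R}(h),
\end{equation*}
in which $\mathcal{E}_H$ is the sum of four Fenchel-defect terms of the form $\iint m^h[H(x,Du+\phi^\intercal P)+H^*(x,-w^h/m^h)+\langle Du+\phi^\intercal P,w^h/m^h\rangle]$ (plus three permutations), while $\mathcal{R}(h)$ collects remainders from the data shifts $u_T-u_T^h$, $m_0-m_0^h$, $f(x,\cdot)-f(x+h,\cdot)$, $\phi-\phi^h$, $H(x,\cdot)-H(x+h,\cdot)$. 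Assumption~\ref{as:coercivity} lower-bounds each defect in $\mathcal{E}_H$ by $c_H m^h|j_1(\cdot)-j_1(\cdot^h)|^2$, while the splitting $f(x,m)-f(x+h,m^h)=[f(x+h,m)-f(x+h,m^h)]+[f(x,m)-f(x+h,m)]$ together with the strong monotonicity \eqref{f strongly monotone} applied to the first summand produces the $c_f\min\{m^{q-2},(m^h)^{q-2}\}|m-m^h|^2$ lower bound, the second summand being absorbed into $\mathcal{R}(h)$ via \eqref{f Lipschitz in x}.

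The remainder is controlled as $\mathcal{R}(h)\leq C|h|^2$ by combining the $W^{2,\infty}$ regularity of $\phi,m_0,u_T$ (Assumption~\ref{as:2nd x derivatives}) for the boundary terms, the quadratic growth hypothesis \eqref{hyp:D_x^2 H} on $D_{xx}^2 H^*$ (which makes shift terms scale as $|h|^2(|w/m|^{r'}+1)m$, integrable since $mH^*(-w/m)\in L^1(Q)$), and Young's inequality with small parameter to absorb mixed contributions back into $\mathcal{E}_H$ and the strong-monotonicity term. I expect the principal obstacle, absent from \cite{graber2018sobolev}, to be the handling of the shifted coupling $\phi^{h,\intercal}P$ sitting inside $H$: the quantity $(\phi^h-\phi)^\intercal P$ has no sign and appears at the inner argument of $H$, so its $L^r$-control must be imported through $\phi\in W^{1,\infty}$ and the a priori bound $P\in L^{s'}$ furnished by Proposition~\ref{prop:existence_solution} and Lemma~\ref{lemma:duality}; reconciling this with the two regimes $s'<r$ and $s'\geq r$ of Hypothesis~(H5), and tracking how the exponents $\tilde r,\kappa,\eta$ chosen at the end of Section~\ref{subsection:estimates_hj} enter into the absorption step, is where the sharpness of the exponent restrictions becomes decisive.
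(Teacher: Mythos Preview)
Your proposal follows the same space-shift philosophy as the paper, and the core mechanism---pair the shifted and unshifted solutions, extract Fenchel defects bounded below via Assumption~\ref{as:coercivity}, and isolate the strongly monotone $f$-term via the splitting $f(x,m)-f(x+h,m^h)=[f(x+h,m)-f(x+h,m^h)]+[f(x,m)-f(x+h,m)]$---is exactly what the paper does. The difference is in the packaging: the paper works with a smooth minimizing sequence $(u_n,P_n,\gamma_n)\in\mathcal{K}_0$, tests the \emph{symmetric} shifts $u_n^{\pm\delta}$ against the fixed pair $(m,w)$ via the Fokker--Planck equation (equations~\eqref{eq:u_n delta-test-m}--\eqref{eq:u_n -delta-test-m}), subtracts twice the optimality relation~\eqref{eq:optimality}, and only then passes to the limit using weak lower semicontinuity of $\xi\mapsto\iint mH(x\pm\delta,\xi)$. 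You instead propose to invoke Lemma~\ref{lem:intbyparts} directly on the weak solution, cross-testing $(u,P,f(\cdot,m))$ against $(m^h,w^h)$ and the shifted triple against $(m,w)$. After a change of variables the two schemes produce the same inequality; your route is more compact but requires a silent extension of Lemma~\ref{lem:intbyparts} to pairs with shifted initial data $m_0^h$ and to triples satisfying the HJ subsolution inequality for the shifted Hamiltonian $H(\cdot+h,\cdot)$ and shifted $\phi^h$. The paper's use of the symmetric pair $\pm\delta$ also makes the boundary terms appear directly as second-order differences $u_T^\delta+u_T^{-\delta}-2u_T$ and $m_0^\delta+m_0^{-\delta}-2m_0$, controlled immediately by the $W^{2,\infty}$ bounds of Assumption~\ref{as:2nd x derivatives}; in your single-shift formulation these differences emerge only after an additional change of variables.

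Two corrections. First, your $\mathcal{E}_H$ is not a sum of four Fenchel defects but of two: the complementarity relations for $(u,P,m,w)$ and $(u^h,P,m^h,w^h)$ have zero defect by optimality, so only the two cross-pairings contribute. Second, your anticipated ``principal obstacle''---the shifted coupling $(\phi^h-\phi)^\intercal P$ inside $H$ and its interaction with the exponent cases of Hypothesis~(H5)---does not in fact materialize. In the paper the $\phi$-shift is handled purely by the $W^{2,\infty}$ regularity of $\phi$: the relevant residual term is $\iint_Q P\cdot\bigl((\phi^\delta+\phi^{-\delta}-2\phi)w\bigr)$, bounded by $|\delta|^2\,\|\phi\|_{W^{2,\infty}}\iint_Q|P\cdot w|$, and $\langle P,\phi w\rangle\in L^1$ is already furnished by Lemma~\ref{lem:intbyparts}. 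No case analysis on $s'$ versus $r$, and none of the exponents $\tilde r,\kappa,\eta$ from Section~\ref{subsection:estimates_hj}, enter the space-regularity argument.
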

	
	Throughout we use the notation $g^\delta(x) = g(x+\delta)$ for any function depending on $x \in \bb{T}^d$.
	
	Take a smooth minimizing sequence $(u_n,P_n,\gamma_n) \in \s{K}_0$ for the dual problem.
	Integrate \eqref{eq:FP equation} by parts against \( u_n \) and rearrange to get
	\begin{equation} \label{eq:u_n-test-m}
	\iint_Q mH(x,D u_n + \phi^\intercal P_n) \dif x \dif t = \int_{\bb{T}^d} (u_Tm(T) - u_n(0) m_0 ) \dif x + \iint_Q \gamma_n m - \langle D u_n, w \rangle \dif x \dif t.
	\end{equation}
	\emph{Step 1.} The following estimates show that (up to a subsequence) $D u_n \rightharpoonup D u$ in $L^{\tilde r}_m([0,T] \times \bb{T}^d;\bb{R}^d)$ (see Section \ref{sec:opt ctrl hj} for definition of $\tilde r$, and NB $\tilde r \leq \min\{r,s'\}$):
	\\\\
	Using Young's inequality and Assumption (H2) we get
	\begin{multline} \label{eq:mgrad un 1}
	\frac{1}{C}\iint_Q \abs{D u_n + \phi^\intercal P_n}^{\tilde r} m \dif x\dif t \leq	
	\frac{1}{C}\iint_Q \abs{D u_n + \phi^\intercal P_n}^r m \dif x\dif t + C\\
	\leq \|u_T\|_\infty + \int_{\bb{T}^d}|u_n(0)|m_0
	+ \iint_Q \del{(\gamma_n)_+ m + Cm\abs{\frac{w}{m}}^{r'}} \dif x\dif t
	+ C.
	\end{multline}
	By possibly increasing \( C \) we get
	\begin{multline} \label{eq:mgrad un 2}
	\frac{1}{C}\iint_Q \abs{D u_n}^{\tilde r} m \dif x\dif t \\
	\leq \|u_T\|_\infty + \int_{\bb{T}^d}|u_n(0)|m_0
	+ \iint_Q \del{(\gamma_n)_+ m + Cm\abs{\frac{w}{m}}^{r'} + C\abs{\phi^\intercal P_n}^{\tilde r} m}\dif x\dif t + C\\
	\leq \|u_T\|_\infty + \int_{\bb{T}^d}|u_n(0)|m_0
	+ \iint_Q \del{(\gamma_n)_+ m + Cm\abs{\frac{w}{m}}^{r'}}\dif x\dif t
	+ C\int_0^T  | P_n |^{s'} \dif t + C.
	\end{multline}
	Since $P_n$ is bounded in $L^{s'}$,  we have that \( D u_n \) is bounded in \( L_m^{\tilde{r}} \) where we recall that \( \tilde{r} = \min(r,s') \). Thus, up to a subsequence, \( D u_n \rightharpoonup \zeta \) for some \( \zeta \in L_m^{\tilde r} \). The argument that \( \zeta = D u \) \( m-\)a.e.~follows as in \cite{graber2018sobolev}.
	We also have, up to a subsequence, that $P_n \rightharpoonup P$ in $L^{s'}(0,T)$, and thus also that $D u_n + \phi^\intercal P_n \rightharpoonup D u + \phi^\intercal P$ in $L^r_m([0,T] \times \bb{T}^d;\bb{R}^d)$. Indeed, the upper bound given by \eqref{eq:mgrad un 1} shows that \( D u_n + \phi^\intercal P_n \) converges weakly in \( L_m^r \), and its limit must be equal to $D u + \phi^\intercal P$ a.e.~by taking the limit of each summand. 
	
	\emph{Step 2.} Now use $u_n^\delta$ and $u_n^{-\delta}$ as test functions in \eqref{eq:FP equation} to get
	\begin{equation} \label{eq:u_n delta-test-m}
	\int_{\bb{T}^d} \del{u_T^\delta m(T) - u_n^\delta(0)m_0}\dif x = \iint_Q \del{H(x+\delta,D u_n^\delta + (\phi^\delta)^\intercal P_n)m-\gamma_n^\delta m + D u_n^\delta \cdot w} \dif x\dif t
	\end{equation}
	and
	\begin{equation} \label{eq:u_n -delta-test-m}
	\int_{\bb{T}^d} \del{u_T^{-\delta} m(T) - u_n^{-\delta}(0)m_0}\dif x = \iint_Q \del{H(x-\delta,D u_n^{-\delta} + (\phi^{-\delta})^T P_n)m-\gamma_n^{-\delta} m + D u_n^{-\delta} \cdot w} \dif x\dif t
	\end{equation}
	We have the optimality condition
	\begin{equation} \label{eq:optimality}
	\int_{\bb{T}^d} \del{u_T m(T) - u(0)m_0}\dif x = -\iint_Q \del{H^*\del{x,-\frac{w}{m}}m + P \cdot (\phi w) + f(x,m)m}  \dif x\dif t.
	\end{equation}
	Take $\eqref{eq:u_n delta-test-m} + \eqref{eq:u_n -delta-test-m} -2\eqref{eq:optimality}$ to get
	\begin{multline} \label{eq:space reg 1}
	\int_{\bb{T}^d} \del{\del{u_T^\delta+u_T^{-\delta}-2u_T} m(T) - u_n(0)\del{m_0^\delta+m_0^{-\delta}-2m_0}}\dif x\\
	= \iint_Q \del{H(x+\delta,D u_n^\delta + (\phi^\delta)^T P_n)m + H^*\del{x+\delta,-\frac{w}{m}}m  + \del{D u_n^\delta + (\phi^\delta)^TP_n} \cdot w} \dif x\dif t\\
	+\iint_Q \del{H(x-\delta,D u_n^{-\delta} + (\phi^{-\delta})^T P_n)m + H^*\del{x-\delta,-\frac{w}{m}}m  + \del{D u_n^{-\delta} + (\phi^{-\delta})^TP_n} \cdot w} \dif x\dif t\\
	+\iint_Q \del{\del{2f(x,m) -\gamma_n^\delta-\gamma_n^{-\delta}} m + 2P \cdot (\phi w) - P_n \cdot (\phi^\delta w + \phi^{-\delta} w)} \dif x\dif t - I
	\end{multline}
	where
	\begin{equation}
	I := \iint_Q \del{H^*\del{x+\delta,-\frac{w}{m}}+H^*\del{x-\delta,-\frac{w}{m}}-2H^*\del{x,-\frac{w}{m}}}m
	\end{equation}
	and where we have used
	\begin{equation}
	\int_{\bb{T}^d}  \del{u_n^\delta(0)+u_n^{-\delta}(0)-2u(0)}m_0\dif x
	= \int_{\bb{T}^d}  u_n(0)\del{m_0^\delta+m_0^{-\delta}-2m_0}\dif x.
	\end{equation}	
	Since $H$ is convex in the third argument, by the result of Step 1 and weak lower semicontinuity we have
	\begin{equation*}
	\iint_Q H(x\pm\delta,D u^{\pm \delta} + (\phi^{\pm \delta})^T P)m  \dif x\dif t
	\leq \liminf \iint_Q H(x\pm\delta,D u_n^{\pm \delta} + (\phi^{\pm \delta})^T P_n)m  \dif x\dif t.
	\end{equation*}
	Letting $n \to \infty$ in \eqref{eq:space reg 1} we obtain
	\begin{multline} \label{eq:space reg 2}
	\iint_Q \del{H(x+\delta,D u^\delta + (\phi^\delta)^T P)m + H^*\del{x+\delta,-\frac{w}{m}}m  + \del{D u^\delta + (\phi^\delta)^TP} \cdot w} \dif x\dif t\\
	+\iint_Q \del{H(x-\delta,D u^{-\delta} + (\phi^{-\delta})^T P)m + H^*\del{x-\delta,-\frac{w}{m}}m  + \del{D u^{-\delta} + (\phi^{-\delta})^TP} \cdot w} \dif x\dif t\\
	\leq
	\int_{\bb{T}^d} \del{\del{u_T^\delta+u_T^{-\delta}-2u_T} m(T) - u(0)\del{m_0^\delta+m_0^{-\delta}-2m_0}}\dif x + I\\
	+\iint_Q \del{\del{f(x+\delta,m^\delta)+f(x-\delta,m^{-\delta})-2f(x,m)} m + P \cdot \del{\del{\phi^\delta + \phi^{-\delta} -2\phi} w}} \dif x\dif t.
	\end{multline}
	By \cite[computation (4.25)]{graber2018sobolev} we have
	\begin{multline} \label{eq:f monotonicity estimates}
	\int_{\bb{T}^d} \del{f(x+\delta,m^\delta)+f(x-\delta,m^{-\delta})-2f(x,m)}m \dif x
	\\
	\leq
	C|\delta|^2 \left(1+\int_{\bb{T}^d} \min\{ m^\delta,m\}^{q} \dif x\right)
	- \frac{c_f}{2}\int_{\bb{T}^d} \min\{ (m^\delta)^{q-2}, m^{q-2}\}| m^\delta - m|^2 \dif x.
	\end{multline}
	Using estimate \eqref{eq:Hcoercivity} on the left-hand side of \eqref{eq:space reg 2}, then using $\abs{a+b}^2 \leq 2\abs{a}^2 + 2\abs{b}^2$, and combining with \eqref{eq:f monotonicity estimates}, then using Assumption \ref{as:2nd x derivatives} we deduce
	\begin{multline} \label{eq:space reg 3}
	\frac{c_H}{2}\iint_Q \abs{\del{D u^\delta + (\phi^\delta)^T P}^{r/2} - \del{D u^{-\delta} + (\phi^{-\delta})^T P}^{r/2}}m \dif x\dif t\\
	+\frac{c_f}{2}\int_{\bb{T}^d} \min\{ (m^\delta)^{q-2}, m^{q-2}\}\abs{m^\delta - m}^2 \dif x\\
	\leq
	\abs{\delta}^2\del{\enVert{u_T}_{W^{2,\infty}} + \enVert{m_0}_{W^{2,\infty}}\int_{\bb{T}^d} \abs{u(0)}\dif x + C_H\del{\enVert{\frac{w}{m}}_{L^r_m}+1}}\\
	\abs{\delta}^2\del{C \left(1+\int_{\bb{T}^d} \min\{ m^\delta,m\}^{q} \dif x\right)
	+\enVert{\phi}_{W^{2,\infty}}\iint_Q \abs{P \cdot  w} \dif x\dif t}.
	\end{multline}
	
	\subsection{Time regularity}
	
	As in the previous subsection, we enumerate our assumptions before stating the main result.
	\begin{assumption}
		\label{as:extra assumptions}
		We assume that $A_{ij} = 0$.
	\end{assumption}
	We remark that Assumption \ref{as:extra assumptions} is much stronger than Assumption \ref{as:A const} but appears to be necessary, for technical reasons that appear in the estimates below.

	\begin{assumption}[Strong monotonicity in time] \label{as:regularizing f in time}
	We assume that \eqref{f strongly monotone} holds.
		
		We assume that $\Psi$ is invertible, with inverse denoted
		 by $\del{\Psi^{-1}}(t,\cdot)$ (for instance, it suffices to assume that its primitive $\Phi$ is strictly convex).
		We assume that, for some constant $c_\Psi > 0$,
		\begin{equation} \label{Psi strongly monotone}
		\del{\Psi^{-1}(\tilde P) - \Psi^{-1}(P)}(\tilde P - P) \geq c_\Psi\min\cbr{\abs{\tilde P}^{s'-2},\abs{P}^{s'-2}}|\tilde P-P|^2 \ \ \forall t,\tau \in [0,T], \ P \in \bb{R}^k
		\end{equation}
	\end{assumption}

	\begin{proposition}
		Under Assumptions \ref{as:extra assumptions}, \ref{as:regularizing f in time}, and \eqref{hyp:D_x^2 H}, for every $\varepsilon > 0$, there exists a constant $C(\varepsilon)$, depending only on $\varepsilon$ and the data, such that
		\begin{equation}
		\enVert{\partial_t \del{m^{q/2}}}_{L^2\del{Q_\varepsilon}} + \enVert{\od{}{t}\del{P^{s'/2}}}_{L^2(\varepsilon,T-\varepsilon)} \leq C(\varepsilon)
		\end{equation}
		where $Q_\varepsilon := \bb{T}^d \times (\varepsilon,T-\varepsilon)$.
	\end{proposition}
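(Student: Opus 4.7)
The plan is to parallel the proof of Proposition \ref{prop:space-regularity} but with translations in time in place of translations in space. Take a smooth minimizing sequence $(u_n,P_n,\gamma_n) \in \mathcal{K}_0$ for the dual problem, which as before satisfies (up to subsequence) $Du_n \rightharpoonup Du$ in $L^{\tilde r}_m$, $Du_n + \phi^\intercal P_n \rightharpoonup Du + \phi^\intercal P$ in $L^r_m$, and $P_n \rightharpoonup P$ in $L^{s'}(0,T)$. Fix $h > 0$ small and write $g^{\pm h}(x,t) = g(x,t \pm h)$. Fix a cutoff $\chi_\varepsilon \in C_c^\infty((0,T))$ with $\chi_\varepsilon \equiv 1$ on $[\varepsilon,T-\varepsilon]$ and support in $(\varepsilon/2, T-\varepsilon/2)$.

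Since $A \equiv 0$ by Assumption \ref{as:extra assumptions}, the Fokker--Planck equation reduces to $\partial_t m + \nabla \cdot w = 0$, so $(m^{\pm h}, w^{\pm h})$ satisfies the same continuity equation on the time interval where it is defined. Using $\chi_\varepsilon u_n^{h}$ and $\chi_\varepsilon u_n^{-h}$ as test functions in the continuity equation for $(m,w)$, and invoking the fact that $u_n^{\pm h}$ solves the time-shifted HJ equation
\[
-\partial_t u_n^{\pm h} + H(x, Du_n^{\pm h} + \phi^\intercal P_n^{\pm h}) = \gamma_n^{\pm h},
\]
I obtain identities analogous to \eqref{eq:u_n delta-test-m}--\eqref{eq:u_n -delta-test-m}. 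The crucial point is that, since $A=0$, no space-derivative cross-terms of the form $\int A_{ij} \partial_{ij} u_n^{\pm h}\, m$ appear---this is precisely why Assumption \ref{as:extra assumptions} is strengthened beyond Assumption \ref{as:A const}. Summing the two identities and subtracting twice a cutoff-weighted version of the optimality identity \eqref{eq:optimality} (compare \eqref{eq:space reg 1}) produces a discrete-Laplacian-in-time expression.

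On the left-hand side I apply the Fenchel coercivity bound \eqref{eq:Hcoercivity} (as in the passage from \eqref{eq:space reg 2} to \eqref{eq:space reg 3}) to extract terms controlling $\iint \chi_\varepsilon\, m\, |j_1(Du^{\pm h} + \phi^\intercal P^{\pm h}) - j_2(-w/m)|^2$. On the right-hand side, \eqref{f strongly monotone} applied to the difference $f(x,m^{\pm h}) - f(x,m)$ yields
\[
-\,\tfrac{c_f}{2} \iint \chi_\varepsilon\, \min\{m^{q-2},(m^{h})^{q-2}\}\,|m^h - m|^2,
\]
while using $P = \Psi\!\big(\!\int\!\phi w\big)$, i.e.\ $\int\!\phi w = \Psi^{-1}(P)$, the strong-monotonicity bound \eqref{Psi strongly monotone} applied between times $t$ and $t+h$ produces $-c_\Psi \int \chi_\varepsilon \min\{|P|^{s'-2},|P^h|^{s'-2}\}|P^h - P|^2$. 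The remaining error terms are of three types: (i) contributions from $\chi_\varepsilon'$ applied to $u_n^{\pm h} m$, controlled by the $L^\kappa(Q) \times L^q(Q)$ bounds; (ii) second differences of the data $u_T,\phi$, bounded by $O(h^2)$ thanks to Assumption \ref{as:2nd x derivatives}; and (iii) the second-difference-in-time of $H^*(x,-w/m)$, which we estimate using only the $x$-regularity needed for the $I$ term in the space proof (no time-derivative of $H^*$ is required since $H$ does not depend on $t$).

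Dividing the resulting inequality by $h^2$, passing to the weak limit in $n$ (the left-hand side is weakly lower semi-continuous in $(Du_n + \phi^\intercal P_n, P_n)$ in view of Step 1 of Proposition \ref{prop:space-regularity}), and using the elementary inequalities $\min\{a^{q-2},b^{q-2}\}|a-b|^2 \gtrsim |a^{q/2} - b^{q/2}|^2$ and $\min\{|P|^{s'-2},|P^h|^{s'-2}\}|P^h-P|^2 \gtrsim \big||P^h|^{s'/2-1}P^h - |P|^{s'/2-1}P\big|^2$, yields a uniform-in-$h$ bound on the time difference quotients of $m^{q/2}$ and of $|P|^{s'/2-1}P$ on $Q_\varepsilon$ and $(\varepsilon,T-\varepsilon)$ respectively. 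Letting $h \downarrow 0$ and applying the difference-quotient characterization of $H^1$ gives the claim.

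The main obstacle is step (i) above: the terms generated by the cutoff $\chi_\varepsilon'$ involve $u(x,t \pm h)\,m(x,t)$ (and similarly $u\,m^{\pm h}$), for which only weak convergence in $L^\kappa \times L^q$ is available along the minimizing sequence. These must be absorbed into the constant $C(\varepsilon)$, which is why the estimate degenerates as $\varepsilon \to 0$; this also explains why no global-in-time analogue of Proposition \ref{prop:space-regularity} is available without further assumptions on the data.
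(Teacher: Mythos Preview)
Your approach has a genuine gap in step (i). When you test the continuity equation with $\chi_\varepsilon u_n^{\pm h}$ and form the second difference, the cutoff produces the error term
\[
-\iint_Q \chi_\varepsilon'(t)\,\big(u_n^{h}+u_n^{-h}-2u_n\big)\,m\,\dif x\,\dif t.
\]
For this to be harmless after dividing by $h^2$, you would need an $O(h^2)$ bound on the second time difference of $u_n$ (or, after shifting the difference onto $\chi_\varepsilon' m$ by a change of variables, an $O(h^2)$ bound on the second time difference of $m$---which is exactly what you are trying to prove). The $L^\kappa\times L^q$ bounds you invoke only show this term is $O(1)$; dividing by $h^2$ and sending $h\to 0$ makes it blow up, so it cannot be ``absorbed into $C(\varepsilon)$.'' Your items (ii) and (iii) are also off: $u_T$ and $\phi$ depend only on $x$, so time translations act trivially on them, and with a pure shift $H^*(x,-w/m)$ has no explicit $t$-dependence to second-difference.

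The paper circumvents this by replacing the rigid translation $t\mapsto t\pm h$ with a smooth reparametrization $\eta_\varepsilon(t)=t+\varepsilon\eta(t)$, where $\eta\in C_c^\infty(0,T)$ and $|\varepsilon|$ is small, and setting $\eta_{-\varepsilon}:=\eta_\varepsilon^{-1}$. Because $\eta_\varepsilon$ fixes $0$ and $T$, the reparametrized function $u_n^\varepsilon(x,t):=u_n(x,\eta_\varepsilon(t))$ keeps the same endpoint values and can be used as a test function on all of $[0,T]$ \emph{without any multiplicative cutoff}; no $\chi_\varepsilon'$ term ever appears. The price is that $u_n^\varepsilon$ solves a perturbed HJ equation with $H^\varepsilon(t,x,\xi):=\eta_\varepsilon'(t)H(x,\xi)$ and $\gamma_n^\varepsilon:=\eta_\varepsilon'(t)\gamma_n(\cdot,\eta_\varepsilon(t))$, and one defines $f^\varepsilon$, $\Phi^\varepsilon$, $(H^*)^\varepsilon$ analogously. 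The residual then becomes
\[
R(\varepsilon)=\iint_Q\Big((H^*)^\varepsilon\big(x,-\tfrac{w}{m}\big)+(H^*)^{-\varepsilon}\big(x,-\tfrac{w}{m}\big)-2H^*\big(x,-\tfrac{w}{m}\big)\Big)m\,\dif x\,\dif t,
\]
which is $O(\varepsilon^2)$ by a second-order expansion in the scalar parameter $\varepsilon$ (this is where the regularity and growth assumptions on $H^*$ enter). The dependence of the final constant on the localization comes through $\|\eta'\|_\infty$, not through a cutoff derivative multiplying an uncontrolled second difference.
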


	\begin{remark}
		The proposition could also be proved for data depending on time, in particular with $f(x,m)$ and $H(x,\xi)$ replaced by $f(t,x,m)$ and $H(t,x,\xi)$, respectively.
		The only additional assumption needed would be a Lipschitz estimate in $t$, where the Lipschitz constant can depend on $x$ (but not on $m$ or $\xi$).
	\end{remark}
	
	\begin{proof}
		Let $\varepsilon\in\bb{R}$ be small and $\eta:[0,T]\to[0,1]$ be smooth and compactly supported on $(0,T)$ such that $|\varepsilon|<\min\left\{{\rm{dist}}(0,{\rm{spt}}(\eta));{\rm{dist}}(T,{\rm{spt}}(\eta))\right\}$ and $\max_{t} \abs{\varepsilon\eta'(t)} < 1$.
	If $\varepsilon > 0$ we set $\eta_\varepsilon(t) = t + \varepsilon \eta(t)$, which is a strictly increasing bijection from $[0,T]$ to itself.
	Then we set $\eta_{-\varepsilon} = \eta_\varepsilon^{-1}$, which is also smooth by the inverse function theorem.
	For competitors $(u,P,\gamma)$ of the minimization problem for $\s{A}$, let us define
	\begin{equation*}
	u^\varepsilon(x,t):=u(x,\eta_{\varepsilon}(t));\ \ P^\varepsilon(t) = P(\eta_{\varepsilon}(t)); \ \ \gamma^\varepsilon(x,t):=\eta_{\varepsilon}'(t)\gamma(x,\eta_{\varepsilon}(t)).
	\end{equation*}
	Notice that by construction, if $t\in\{0,T\}$ then $u(x,t)=u^\varepsilon(x,t)$ and $\gamma(x,t)=\gamma^\varepsilon(x,t)$, provided that $\gamma(x,t)$ is well-defined. 
	
	Similarly, for competitors $(m,w)$  of minimization problem for $\s{B}$, we define
	$$m^\varepsilon(x,t):=m(x,\eta_{\varepsilon}(t));\ \ w^\varepsilon(x,t):=\eta_{\varepsilon}'(t)w(x,\eta_{\varepsilon}(t))$$
	and here as well if $t\in\{0,T\}$ then $m(x,t)=m^\varepsilon(x,t)$ and $w(x,t)=w^\varepsilon(x,t).$
	
	We define moreover perturbations on the data as 
	$$
	 \Phi^\varepsilon(t,v) := \eta_{\varepsilon}'(t)\Phi\del{v/\eta_{\varepsilon}'(t)},$$
	$$f^\varepsilon(t,x,m):=\eta_{\varepsilon}'(t) f(x,m);\ \ F^\varepsilon(t,x,m):= \eta_{\varepsilon}'(t) F(x,m),$$
	from which the Legendre transforms w.r.t.~the last variable satisfy
	$$(\Phi^\varepsilon)^*(t,P) = \eta_{\varepsilon}'(t)\Phi(P), \
	(F^\varepsilon)^*(t,x,\gamma):= \eta_{\varepsilon}'(t) F^*(x,\gamma/\eta_{\varepsilon}'(t)).$$
	Finally, we define
	$$H^\varepsilon(t,x,\xi):= \eta_{\varepsilon}'(t) H(x,\xi), \ \ {\rm{thus}}\ (H^\varepsilon)^*(x,\zeta):= \eta_{\varepsilon}'(t) H^*(x,\zeta/\eta_{\varepsilon}'(t)).$$
	
	\emph{Step 1.}
	Take a smooth minimizing sequence $(u_n,P_n,\gamma_n)$ in $\s{K}_0$.
	Use $u_n^{\pm \varepsilon}$ as a test function in $\partial_t m + \nabla \cdot w = 0$ to get
	\begin{equation} \label{eq:time reg1}
	\int_{\bb{T}^d}\del{u_Tm(T)-u_n(0)m_0}\dif x
	\geq
	\iint_Q\del{H^{ \varepsilon}(t,x,D u_n^{ \varepsilon} + \phi^\intercal P^\varepsilon(t))m-\gamma_n^{ \varepsilon}m + D u_n^\varepsilon \cdot w}\dif x\dif t
	\end{equation}
	and
	\begin{equation} \label{eq:time reg2}
	\int_{\bb{T}^d}\del{u_Tm(T)-u_n(0)m_0}\dif x
	\geq
	\iint_Q\del{H^{- \varepsilon}(t,x,D u_n^{- \varepsilon} + \phi^\intercal P^{-\varepsilon}(t))m-\gamma_n^{- \varepsilon}m + D u_n^{-\varepsilon} \cdot w}\dif x\dif t.
	\end{equation}
	Take $\eqref{eq:time reg1}+\eqref{eq:time reg2}-2\eqref{eq:optimality}$ to get
	\begin{multline}
	\int_{\bb{T}^d} 2\del{u(0) - u_n(0)}m_0 \dif x\\
	\geq
	\iint_Q\del{H^{ \varepsilon}(x,D u_n^{\varepsilon} + \phi^\intercal P^{\varepsilon}) + H^*\del{x,-\frac{w}{m}} + \del{D u_n^{\varepsilon} + \phi^\intercal P^{\varepsilon}} \cdot \frac{w}{m}}m \dif x \dif t\\
	+ \iint_Q\del{H^{- \varepsilon}(x,D u_n^{- \varepsilon} + \phi^\intercal P^{-\varepsilon}) + H^*\del{x,-\frac{w}{m}}
	 + \del{D u_n^{-\varepsilon}+\phi^\intercal P^{-\varepsilon}} \cdot \frac{w}{m}}m \dif x \dif t\\
	+ \iint_Q \del{2f(x,t,m) - \gamma_n^\varepsilon - \gamma_n^{-\varepsilon}}m \dif x \dif t
	+ \iint_Q \del{2P \cdot (\phi w) - P^\varepsilon \cdot (\phi w) - P^{-\varepsilon} \cdot (\phi w)} \dif x \dif t.
	\end{multline}
	Letting $n \to \infty$ we get
	\begin{multline} \label{eq:time reg estimate}
	\iint_Q\del{H^{ \varepsilon}(x,D u^{\varepsilon} + \phi^\intercal P^{\varepsilon}) + (H^*)^\varepsilon\del{x,-\frac{w}{m}} + \del{D u^{\varepsilon} + \phi^\intercal P^{\varepsilon}} \cdot \frac{w}{m}}m \dif x \dif t\\
	+ \iint_Q\del{H^{- \varepsilon}(x,D u^{- \varepsilon} + \phi^\intercal P^{-\varepsilon}) + (H^*)^{-\varepsilon}\del{x,-\frac{w}{m}}
		+ \del{D u^{-\varepsilon}+\phi^\intercal P^{-\varepsilon}} \cdot \frac{w}{m}}m \dif x \dif t\\
	+ \iint_Q \del{2f(x,m) - f^\varepsilon(t,x,m^\varepsilon) - f^{-\varepsilon}(t,x,m^{-\varepsilon})}m \dif x \dif t\\
	+ \iint_Q \del{2P \cdot (\phi w) - P^\varepsilon \cdot (\phi w) - P^{-\varepsilon} \cdot (\phi w)} \dif x \dif t
	\leq R(\varepsilon)
	\end{multline}
	where
	\begin{equation} \label{eq:R}
	R(\varepsilon) := \iint_Q\del{(H^*)^\varepsilon\del{x,-\frac{w}{m}} + (H^*)^{-\varepsilon}\del{x,-\frac{w}{m}}
	- 2H^*\del{x,-\frac{w}{m}}}\dif x \dif t.
	\end{equation}
	Arguing as in \cite[Proposition 3.3, Step 1]{graber2019planning} and using the estimate on $D_{xx}^2 H^*$, we have $R(\varepsilon) = O(\varepsilon^2)$.
	
	Next we perform changes of variables and the relation $P = D_v\Phi(t,\int \phi w)$ to rewrite
	\begin{multline}
	\iint_Q \del{2P \cdot (\phi w) - P^\varepsilon \cdot (\phi w) - P^{-\varepsilon} \cdot (\phi w)} \dif x \dif t
	=\iint_Q \del{P^\varepsilon - P} \cdot (\phi w^\varepsilon-\phi w) \dif x \dif t\\
	=\int_0^T\del{P^\varepsilon - P} \cdot \del{\del{\Psi^{-1}}^\varepsilon(t,P^\varepsilon) - \Psi^{-1}(t,P)} \dif t.
	\end{multline}
	Using the same argument as in \cite[Proposition 3.3, Step 4]{graber2019planning}, Assumption \ref{as:regularizing f in time} implies
	\begin{multline}\label{eq:P coercivity}
	\int_0^T\del{P^\varepsilon - P} \cdot \del{\del{\Psi^{-1}}^\varepsilon(t,P^\varepsilon) - \del{\Psi^{-1}}(t,P)} \dif t\\
	\geq \frac{c_\Psi}{2}\int_0^T \min\cbr{\abs{P^\varepsilon(t)},\abs{P(t)}}^{s'-2}\abs{P^\varepsilon(t)-P(t)}^2\dif t -C\abs{\varepsilon}^2\int_0^T \abs{P(t)}^{s'}\dif t.
	\end{multline}
	We use an analogous argument (or see \cite[Proposition 3.3, Step 4]{graber2019planning}) we deduce
	\begin{multline} \label{eq:f coercivity}
	\iint_Q \del{2f(x,m) - f^\varepsilon(t,x,m^\varepsilon) - f^{-\varepsilon}(t,x,m^{-\varepsilon})}m \dif x \dif t\\
	=\iint_Q \del{f^\varepsilon(t,x,m^\varepsilon)-f(x,m)}\del{m^\varepsilon-m}\dif x \dif t\\
	\geq \frac{c_f}{2}\iint_Q \min\cbr{m^\varepsilon,m}^{q-2}\abs{m^\varepsilon-m}^2 \dif x \dif t
	- C\abs{\varepsilon}^2\iint_Q m^{q}\dif x \dif t.
	\end{multline}
	Finally, using Assumption \ref{as:coercivity} we get
	\begin{multline} \label{eq:H coercivity time}
	\iint_Q\del{H^{ \varepsilon}(x,D u^{\varepsilon} + \phi^\intercal P^{\varepsilon}) + (H^*)^\varepsilon\del{x,-\frac{w}{m}} + \del{D u^{\varepsilon} + \phi^\intercal P^{\varepsilon}} \cdot \frac{w}{m}}m \dif x \dif t\\
	+ \iint_Q\del{H^{- \varepsilon}(x,D u^{- \varepsilon} + \phi^\intercal P^{-\varepsilon}) + (H^*)^{-\varepsilon}\del{x,-\frac{w}{m}}
		+ \del{D u^{-\varepsilon}+\phi^\intercal P^{-\varepsilon}} \cdot \frac{w}{m}}m \dif x \dif t\\
	\geq c_H\iint_Q\abs{j_1\del{D u^{\varepsilon} + \phi^\intercal P^{\varepsilon}} - j_2\del{\frac{w}{m}}}^2m \dif x \dif t
	+ c_H\iint_Q\abs{j_1\del{D u^{-\varepsilon} + \phi^\intercal P^{-\varepsilon}} - j_2\del{\frac{w}{m}}}^2m \dif x \dif t\\
	\geq \frac{c_H}{2}\iint_Q\abs{j_1\del{D u^{\varepsilon} + \phi^\intercal P^{\varepsilon}}-j_1\del{D u^{-\varepsilon} + \phi^\intercal P^{-\varepsilon}}}^2m \dif x \dif t
	\end{multline}
	Combining \eqref{eq:R}, \eqref{eq:P coercivity}, \eqref{eq:f coercivity}, and \eqref{eq:H coercivity time} with \eqref{eq:time reg estimate}, we get
	\begin{multline} \label{eq:final time regularity}
	\frac{c_H}{2}\iint_Q\abs{j_1\del{D u^{\varepsilon} + \phi^\intercal P^{\varepsilon}}-j_1\del{D u^{-\varepsilon} + \phi^\intercal P^{-\varepsilon}}}^2m \dif x \dif t \\
	+ \frac{c_f}{2}\iint_Q \min\cbr{m^\varepsilon,m}^{q-2}\abs{m^\varepsilon-m}^2 \dif x \dif t\\
	+ \frac{c_\Psi}{2}\int_0^T \min\cbr{\abs{P^\varepsilon(t)},\abs{P(t)}}^{s'-2}\abs{P^\varepsilon(t)-P(t)}^2\dif t
	\leq C\abs{\varepsilon}^2,
	\end{multline}
	where we have used the estimates on $\iint_Q m^q \dif x \dif t$ and $\int_0^T \abs{P(t)}^{s'}\dif t$.
	The conclusion follows.
	\end{proof}
	
	\bibliographystyle{alpha}
	\bibliography{mybib}
\end{document}